\documentclass[11pt]{article}
\usepackage{graphicx} % Required for inserting images
\usepackage[a4paper,top=2cm,bottom=2cm,left=3cm,right=3cm,marginparwidth=1.75cm]{geometry}

\usepackage{xcolor}

\usepackage[utf8]{inputenc}
\usepackage[english]{babel}
\usepackage{amsfonts}
\usepackage{amsmath,amsthm,amscd,amssymb,mathrsfs,setspace}
\usepackage{dsfont}
\usepackage{mathtools}

\usepackage[colorlinks,citecolor=lccx]{hyperref}
\usepackage[capitalize,nameinlink]{cleveref}

\usepackage{threeparttable}
\usepackage{tikz}

\usepackage{csquotes}

\usepackage{cancel}
\usepackage{float}
\usepackage{enumitem} 

\usepackage{booktabs}
\usepackage{multirow}
\usepackage{siunitx}
\usepackage{standalone}
\usepackage{pgfplots}
\usepgfplotslibrary{colormaps}
\usetikzlibrary{patterns}
\pgfplotsset{compat=1.18}
\newcommand\xl{-3}
\newcommand\yl{-3}
\newcommand\xu{3}
\newcommand\yu{3}
\newcommand\bluenocapity{0.1}
\newcommand\greenocapity{0.4 }
\newcommand\greynocapity{0.15}
\newcommand\orangeocapity{0.4 }
\newcommand\bound{2}
\newcommand\boundbound{4}
\newcommand\xlo{-\bound}
\newcommand\ylo{-\bound}
\newcommand\xuo{\bound}
\newcommand\yuo{\bound}
\newcommand\xloylu{-\boundbound}
\newcommand\xluylu{\boundbound}
\usepackage{authblk}

\usepackage{subcaption}

\usepackage{orcidlink}

\newcommand{\Ln}{{L^2(\Omega, \R^n)}}
\renewcommand{\L}{{L^2(\Omega)}}
\newcommand{\Hz}{H^1_0(\Omega)}

\newcommand{\C}{\mathbb{C}}
\newcommand{\R}{\mathbb{R}}
\newcommand{\N}{\mathbb{N}}
\newcommand{\K}{\mathbb{K}}
\newcommand{\1}{\mathds{1}}
\newcommand{\calQ}{\mathcal{Q}}
\newcommand{\calF}{\mathcal{F}}
\newcommand{\Ha}{\mathcal{H}}

\newcommand{\wad}{W_{\text{ad}}}
\newcommand{\tr}{\mathrm{tr}}
\renewcommand{\div}{\operatorname{div}}
\newcommand{\Hzerodiv}{H_0^{\mathrm{div}}}
\newcommand{\conj}[1]{\overline{#1}}

\DeclareMathOperator*{\sgn}{sgn}
\DeclareMathOperator*{\meas}{meas}

\DeclareMathOperator*{\TV}{TV}

\newcommand{\TVt}{\operatorname*{TV}_{\tau}}
\newcommand{\RT}{\operatorname*{RT0}_0^\tau}
\DeclareMathOperator*{\BV}{BV}
\DeclareMathOperator*{\argmin}{\arg\min}

\newtheorem{theorem}{Theorem}[section]
\newtheorem{proposition}[theorem]{Proposition}
\newtheorem{lemma}[theorem]{Lemma}
\newtheorem{corollary}[theorem]{Corollary}

\newtheorem{assumption}[theorem]{Assumption}

\theoremstyle{remark}
\newtheorem{remark}[theorem]{Remark}

\crefname{assumption}{Assumption}{Assumptions}
\Crefname{assumption}{Assumption}{Assumptions}

\makeatletter
\newcommand{\oset}[3][0ex]{%
  \mathrel{\mathop{#3}\limits^{
    \vbox to#1{\kern-2\ex@
    \hbox{$\scriptstyle#2$}\vss}}}}
\makeatother

\usepackage{marginnote}

\title{McCormick relaxations for PDE-constrained optimization on multi-dimensional domains}

\definecolor{lccx}{HTML}{92268F}
\ifpdf
\hypersetup{
    pdfauthor={M. Pokotylo and P. Manns},
	linkcolor=lccx
}
\fi
\author{Mariia Pokotylo $^{1}$\orcidlink{0009-0007-6495-8399}}
\author{Paul Manns $^{2}$\orcidlink{0000-0003-0654-6613}}

\affil{$^{1}$ Department of Mathematics, TU Dortmund University, Dortmund, Germany\\
\textit{mariia.pokotylo@tu-dortmund.de, paul.manns@tu-dortmund.de}}
\begin{document}
\maketitle
\begingroup
\renewcommand\thefootnote{\fnsymbol{footnote}}
\footnotetext[0]{
Mariia Pokotylo and Paul Manns acknowledge funding by Deutsche Forschungsgemeinschaft (DFG) under project no.~540198933. The authors thank Julian Hall for support with HiGHS and providing
	feedback on our instances in particular with respect to 
numerical difficulties of the solvers.}
\endgroup
\begin{abstract}
We are interested in computing approximate dual bounds for nonconvex optimal control problems using McCormick inequalities.
To handle the numerical burden concerned with the tightening of these inequalities, we locally average the bilinear term
in the PDE constraint. We extend recent theory in this area to elliptic PDEs on multidimensional domains
that involve bilinear terms. As an additional result, we also generalize the necessary assumptions to allow for more
flexibility in the choice of the discretization for the total variation penalty term from the underlying control problem. 
Furthermore, we prove the existence of valid bounds on the state variable, which are crucial to enable
the optimization-based bound-tightening (OBBT) procedure to tighten the McCormick inequalities.

Since theoretical bounds on the state variable are rather conservative, OBBT is both crucial and the main computational
bottleneck for deriving a sharp approximate dual bound. We show how the sequence of LPs arising during OBBT may
sensibly be warmstarted, allowing us to successfully scale the numerical experiments to finer discretizations
than what was possible before.

\end{abstract}
\section{Introduction}\label{sec:intro}
While there is vast literature about local optimization, first- and second-order 
algorithms, and their convergence to stationary points and local minimizers in 
optimal control of PDEs, literature on global optimization of such
problems is scarce if it is addressed at all with \cite{habeck2019global}
being a notable exception.
In this work, we contribute to this compute-intensive challenge and
concern ourselves with the task of computing approximate dual bounds
for nonconvex optimal control problems. Dual bounds are crucial for 
global optimization algorithms like (spatial) branch-\&-bound
\cite{land1960doig,tawarmalani2013convexification} to certify the 
(approximate) global optimality of a computed solution and prune parts of 
the decision space. To this end, we apply and extend the theory presented 
in \cite{kaltenbacher2026locally,manns} to elliptic control problems on 
multi-dimensional domains and provide some computational results
that demonstrate advantages as well as persisting challenges of
our approach. Specifically, we carry out our analysis
for two nonconvex model optimal control problems that are governed by elliptic PDEs that involve bilinear terms and have a total variation penalty term in the  objective.
	
Our first model problem is a multi-dimensional generalization of the 
one-dimensional academic model problem from \cite[\S3]{manns} and reads
\begin{gather} \label{eq : ocpe}
\begin{aligned}
\min_{u,w} \ &j(u,w) + \alpha \TV(w)\\
\text{s.t. } & \int_\Omega A \nabla u \cdot \nabla v + \int_\Omega  uwv = \int_\Omega fv \text{ for all } v \in \Hz \\
&w \in \wad, u \in \Hz,
\end{aligned} \tag{OCP$_E$}
\end{gather}
where $\Omega$ is a bounded domain with a polygonal boundary of dimension $n \in \{1,2,3\}$ and the set of admissible controls is $\wad \coloneqq \{w \in L^\infty(\Omega) : w_\ell \leq w \leq w_u \text{ a.e.}\}$.
The objective functional $j$ is Lipschitz continuous on bounded sets,
the source term $f \in \L$ is fixed, the operator
$ A \coloneqq (a_{ij})_{1 \leq i \leq n, 1 \leq j \leq n}$ is coercive
with a positive coercivity constant $\beta$ and bounded entries, that is
\begin{align} 
\beta |v|^2 &\le A v \cdot v \text{ for all } v \in \R^n \text{ and a.e.\ in } \Omega,
\label{eq : coerce_ocpe}\\
a_{ij} &\in L^\infty(\Omega) \text{ for all } i,j \in \{1, \ldots, n\}.\label{eq : bounded_ocpe}
\end{align}
In order to ensure ellipticity, we assume that $\beta$ satisfies $-\frac{\beta}{c_p^2} < w_\ell < w_u <\frac{\beta}{c_p^2}$,
where $c_p$ is a Poincaré constant depending on $\Omega$ and $w_\ell$, $w_u$ are some real fixed constants.
The constant $\alpha > 0 $ scales the regularization term $\TV(w)$, which denotes the total
variation of the control function $w$. 

The second model problem we consider leans on the
topology optimization problem from 
\cite{haslinger_topology_2015,leyffer2021convergence}.
The underlying PDE is a Helmholtz equation on a two-dimensional
square with Robin boundary conditions. It has a complex state space,
the Sobolev space $H^1(\Omega, \C)$. Specifically, the problem reads
\begin{gather} \label{eq : ocpc}
\begin{aligned}
\min_{u,w} \ &j(u,w) + \alpha \TV(w)\\
\text{s.t. } & \int_{\Omega} \nabla u \cdot \nabla \bar{v} - k_0^2 \int_{\Omega} (1 + qw) u \bar{v} - 
                            ik_0 \int_{\partial \Omega} u \bar{v} \\
                             &\quad\quad = k_0^2 \int_{\Omega} qw u_0 \bar{v}  \text{ for all } v \in H^1(\Omega, \C)\\
             &w \in C, u \in H^1(\Omega, \C)
\end{aligned} \tag{OCP$_H$}
\end{gather}
with $u_0 = \exp(ik_0 d \cdot x)$, 
$d \in \R^2$, $w \in L^\infty(\Omega)$, $q = \tilde{q}1_{\tilde{\Omega}}$
with $\tilde{q} > 0$ and $1_{\tilde{\Omega}}$ being the $\{0,1\}$-valued indicator function of $\tilde{\Omega} \subset \Omega$,
and a small enough wave number $k_0 > 0$. 
A sufficient condition
for $k_0$ is given later in
\cref{prp:helmholtz_existence_uniqueness}.
Note that with a
slight abuse of notation we apply the necessary trace operators in boundary integrals.

To obtain dual bounds, that is lower bounds, on \eqref{eq : ocpe} and
\eqref{eq : ocpc}, a convexification of the non-linear PDE is necessary.
A widely-used tool from finite-dimensional optimization are
McCormick relaxations \cite{McCormick}
that replace the bilinear term $uw$ by a linear one that itself is 
constrained by further linear McCormick inequalities which in turn depend
on $L^\infty(\Omega)$-bounds of $u$ and $w$. The resulting convex problem 
with only linear constraints is the so-called the McCormick relaxation. 
Importantly, the tightness of the McCormick relaxation hinges on the 
$L^\infty$-bounds on $u$ and $w$. While bounds on the control $w$
are generally given, bounds on $u$ must be verified for the PDE at
hand and estimates involve embedding and Poincar\'{e}--Friedrichs-type 
constants, where often only very conservative estimates are available
if at all. A common means to improve such bounds and tighten the
McCormick relaxation is optimization-based bound-tightening (OBBT)  
\cite{quesada1993global,quesada1995global} but this may be deemed to be 
computationally intractable due to, at first infinitely many,
additional mixed control-state constraints with infinitely many bounds to
be tightened. This problem generally persists after discretization
due to the high number of bounds to be tightened at, e.g., every node of
a nodal basis.

For this reason, we reduce the number of constraints by performing a local averaging of
the bilinear term and thus also the McCormick inequalities. The resulting 
approximation is a system of finitely many linear inequalities whose
number can be controlled independently thus chosen much coarser
than of the underlying mesh, on which the PDE is solved. This allows to deal with much fewer bounds to tighten and a much lower
computational load. The second means to further reduce the computational
load is to also make a coarser ansatz for the control
discretization, thereby reducing the number of variables, the
optimization has to deal with in a reduced formulation.
Clearly, these discretizations involve errors that need to be quantified
and subtracted if one wants to use these approximate McCormick relaxations
for dual bounds in global optimization algorithms.
In \cite{manns}, this procedure was analyzed for a general class
of PDE control problems and showed that the derived problem provides
an approximate dual bound on the original problem up to an a priori
error estimate under a set of assumptions on the PDE and the involved
discretizations. They provide a simple one-dimensional example that 
satisfies these assumptions by using embeddings that are only available
in 1D. We show that these assumptions and a priori error estimates can
be actually be verified for the multi-dimensional model problems
\eqref{eq : ocpe} and \eqref{eq : ocpc} using two different
proof strategies so that our verification shall serve as a guidance
for future research. We take additional care to include
a suitable discretization of the $\TV$-term in our analysis, thereby settling an open problem from \cite{schiemann2025discretization}.

While the verification of the assumptions of our model problems is our focus, we also provide computational results that build on the
two-dimensional teasing example from \cite{manns}, which is structurally
similar to \eqref{eq : ocpe}. In \cite{manns}, it was demonstrated
that the OBBT procedure consumes very high compute times already for
a relatively coarse local averaging grid. We show how different warm start techniques and a
deliberate order of grid cells can
accelerate the OBBT procedure substantially.

The remainder of this article is structured as follows. We first briefly introduce our notation below.
Then, in \cref{sec : mcc_theory}, we introduce the McCormick relaxations and present how and under which assumptions we approximate the model problems. Since the necessary assumptions are nontrivial for the problems considered, we extensively verify them in \cref{sec : verification_of_ass}. In the course of this, we first verify the assumptions concerning the Laplace-type PDE governing \eqref{eq : ocpe} and Helmholtz equation constraining \eqref{eq : ocpc}
in \cref{subsec : verification_of_ass_ocpe} and \cref{subsec : verification_of_ass_helmholtz} respectively. Then, in \cref{subsec : verification_of_ass_tv}, we address the approximation of the total variation for our domain. We describe our computational experiments
and present and discuss their results in \cref{sec : comp_results}. We conclude the findings of this paper in \cref{sec : concluson}.
\section{Notation and Constants}\label{sec:notation}
Our estimations will involve constants 
which depend on the parameters of the initial problem. We will introduce the constants with respect to these parameters
and then abbreviate them in the remainder omitting the initial problem parameters
in order to avoid notational bloat; e.g., we will abbreviate $c_p(\Omega)$, 
denoting the squared Poincaré constant, by $c_p$.
We list the constants and operators that are frequently used in the remainder.

\paragraph{Operators.} We denote by $\meas$ the Lebesgue measure.

We introduce notation for the projection operator to piecewise constant functions 
on partitions of $\Omega$, which will perform the local averaging of the bilinear 
term.Let ${\calQ}_{h} = \{Q_h^1, \ldots, Q_{h}^{N_h}\}$ be a disjoint partition of 
$\Omega$ for $N_h \in \N$ with mesh size $h$. Then, we denote the projection
from the space  of integrable functions to the piecewise constant functions 
defined on $\{{\calQ}_{h}\}$ by $P_{h}$, which reads
\begin{equation*}
    P_{h} g = \sum_{Q_h^i \in {\calQ}_{h}} \1_{Q_h^i} \underbrace{\frac{1}{\meas Q_h^i} \int_{Q_h^i} g}_{\eqqcolon (P_h g)_i} \text{ for } g \in L^1(\Omega).
\end{equation*}
\paragraph{Vector spaces.} 
For $\mathbb{K} \in \{\R,\C\}$,
$L^p(\Omega, \mathbb{K}^n)$ denotes the Lebesgue space of $p$-integrable functions for $1 \leq p <\infty$; 
in case $n=1$ we simplify this notation to $L^p(\Omega, \K)$.
$L^\infty(\Omega, \mathbb{K})$ denotes the space of essentially bounded functions. We will use $H^1(\Omega, \mathbb{K})$ for the Sobolev space of 
$L^2(\Omega, \mathbb{K})$-functions having $L^2(\Omega,\mathbb{K})$-integrable partial derivatives endowed with the norm 
\[
    \| y \|_{H^1(\Omega, \mathbb{K})} \coloneqq \sqrt{\|y\|^2_{L^2 (\Omega, \mathbb{K})} + \|\nabla y\|^2_{L^2 (\Omega, \mathbb{K}^n)}},
\]
which is equivalent to $\sqrt{\|y\|^2_{L^2 (\partial \Omega, \K)}
+
\|\nabla y\|^2_{L^2 (\Omega,\mathbb{K}^n)}}$ for some $\lambda > 0 $ such that
\begin{equation*}
\sqrt{\lambda}
\|y\|_{H^1(\Omega,\mathbb{K})} \le \sqrt{\|y\|^2_{L^2(\partial \Omega, \mathbb{K})} + \|\nabla y\|^2_{L^2(\Omega,\mathbb{K}^n)}},
\end{equation*}
where $\tr : H^1(\Omega,\mathbb{K})
\to H^{1/2}(\partial\Omega)$
is the trace operator.
We observe that $H^1(\Omega,\mathbb{K})$ is a Hilbert space. 

We denote the space of $H^1(\Omega, \mathbb{K})$-functions 
vanishing at the boundary of $\Omega$ by $H_0^1(\Omega,\mathbb{K})$, which
may be equipped with the norm $  \|  \nabla  y \|_{L^2(\Omega, \K^n)} $.

We use $\Hzerodiv(\Omega)$ for the space of $L^2(\Omega, \K^n)$-integrable functions with $L^2(\Omega)$-integrable divergence and vanishing normal trace.  
\paragraph{Order relation between $\C$-valued functions.} We will often require inequalities for real and imaginary part of complex-valued functions $u$, $v \in L^1(\Omega,\C)$. With a slight abuse of notation and in the interest of a clear presentation, we write $u \le v$ a.e.\ to state that both $\Re u \le \Re v$ and $\Im u \le \Im v$ hold a.e., that is, they hold component-wise.

\section{Relaxation with OBBT and local averaging.}\label{sec : mcc_theory}
Leyffer and Manns proposed a three-stage approximation of nonconvex optimal control
problems with bilinear terms in the state equation in \cite{manns}:
\begin{enumerate}
    \item[Stage 1:] Replacing the bilinear term $uw$ by a convex superset described by linear inequalities (so-called McCormick inequalities).
    \item[Stage 2:] Partial approximation of state and control variables in the McCormick inequalities by local averaging.
    \item[Stage 3:] Combined discretization of the control variable and total variation term.
\end{enumerate}
The resulting approximating problem gives an approximate (up to an a priori
estimate) lower bound on \eqref{eq : ocpe}. In addition, the feasible set 
of this problem is tightened towards its convex hull with OBBT (Algorithm 
1 in the same paper).
We now apply the procedure to an abstract PDE system that covers both the constraining PDEs in
\eqref{eq : ocpe} and \eqref{eq : ocpc}. In addition, we generalize the procedure. While one discretization grid is used for both the bilinearity and the control variable in \cite{manns}, we use 
two separate grids, which provides flexibility for balancing accuracy and performance; see also \cite{kaltenbacher2026locally}.
We go through each of the approximation stages to derive the final approximating problem \eqref{eq : mcchh}. 

\paragraph{Stage 1.}
The general idea for deriving the McCormick relaxations is to
(a) replace the bilinear term $uw$ by a new variable $z$  and (b) introduce new linear so-called McCormick inequalities in $z,u,w$. 
These linear inequalities convexify $z = uw$ and thus the feasible set. 

We state a setting that covers both model problems
\eqref{eq : ocpe} and \eqref{eq : ocpc}. To this end, let $\K \in \{\R, \C\}$. Abstracting from the specific PDE, we seek for a weak solution
$u \in H^1(\Omega,\K)$ to
\begin{equation} \label{eq : abstract_pde}
\begin{aligned}
- \div(\tilde{A} \nabla u) + f_1 u + f_2 wu &= f_0 &&\text{ in } \Omega, \\
u &= g_0 &&\text{ on } \Gamma_1, \\
\frac{\partial u}{\partial n} &= g_1(u) &&\text{ on } \Gamma_2,
\end{aligned}
\end{equation}
where $\tilde{A}$ is a coercive operator with bounded entries, $f_0 \in L^2(\Omega)$, $f_1$ and $f_2$ are smooth functions such that ellipticity
estimates can be made on $u$, $g_0 = 0$ corresponds the Dirichlet boundary condition, and $g_1 =  i k_0 u$ corresponds the Robin boundary condition.
We have $\Gamma_1 = \partial\Omega$, $\Gamma_2 = \emptyset$ for
\eqref{eq : ocpe} and $\Gamma_1 = \emptyset$, $\Gamma_2 = \partial\Omega$
for \eqref{eq : ocpc}. 

We follow Example 1 from \cite{McCormick} to derive the McCormick relaxations of $uw \in L^2(\Omega,\K)$
under uniform bounds $u_\ell \le u \le u_u$
with $u_\ell$, $u_u \in L^2(\Omega,\K)$; see \cref{ass : pdes} below. We
will argue the existence of $L^\infty$-bounds at a later point.

Then, the convex lower bound (see \cite{McCormick} and  \cref{fig:conv_under} for a visualization) and the concave upper bound \cite{al1983jointly} of the function $uw$ are a.e.\ given by 
\begin{equation} \label{eq : mcc_ineq}
\begin{aligned}
wu &\leq \max\{w_u u + u_u w- u_u w_u, w_\ell u + u_\ell w- u_\ell w_\ell\} ,\\
wu &\ge \min\{w_u u + u_\ell w- u_\ell w_u, w_\ell u + u_u w- u_u w_\ell\}, 
\end{aligned}
\end{equation}
where, if $\K =\C$, we define the operators $\max$ and $\min$ component-wise over the real and imaginary part
 such that
 $\max\{a,b\} \coloneqq  \max\{\Re a,\Re b\} + i \max\{\Im a,\Im b\} $ and 
$\min\{a,b\} \coloneqq  \min\{\Re a,\Re b\} + i \min\{\Im a,\Im b\} $ for complex-valued functions $a$, $b$. We also recall that we use the symbols $\le$, $\ge$ component-wise for the real and imaginary part; see also \Cref{sec:notation}.

Due to \eqref{eq : mcc_ineq}, $uw$ can be bounded (component-wise for $\K =\C$) by two linear expressions from below and two from above.

In the real case, this gives 4 pointwise a.e. inequalities
and in the complex case, we have 8 pointwise a.e. inequalities,
which we write as 4 component-wise inequalities in the notation from \cref{sec:notation}.
Replace $uw$ by a new variable $z \in L^2(\Omega, \K)$ that is constrained by the inequalities \eqref{eq : mcc_ineq} yields:
\begin{gather}\label{eq:mcc}
\begin{aligned}
   & - \div(\tilde{A} \nabla u) + f_1 u + f_2 z = f_0 &&\text{ in } \Omega, \\
    &u = g_0 &&\text{ on } \Gamma_1, \\
    &\frac{\partial u}{\partial n} = g_1(u) &&\text{ on } \Gamma_2,\\
&z \ge u_\ell w + u w_\ell - u_\ell w_\ell\enskip\text{a.e.},\\ 
&z \ge u_u w + u w_u - u_u w_u\enskip\text{a.e.},\\
&z \le u_u w + u w_\ell - u_u w_\ell\enskip\text{a.e.},\\
&z \le u_\ell w + u w_u - u_\ell w_u\enskip\text{a.e.},\\
&u_\ell \le u \le u_u\enskip\text{a.e.},\\
&w_\ell \le w \le w_u\enskip\text{a.e.}
\end{aligned}
\end{gather}
The tightness of the McCormick envelope and in turn the McCormick relaxation
depends on the derived bounds $u_\ell$, $u_u$ on $u$. We highlight again that such bounds can be obtained analytically
but involve Sobolev embedding and Poincar\'{e}--Friedrichs-type constants,
which are often not known precisely and may be very large or too conservative in general. 

\begin{figure}[t]
\begin{center}
	\begin{subfigure}[t]{0.45\textwidth}
	\begin{tikzpicture}
  \begin{axis}[
    view={20}{12},
    xlabel={$x$}, ylabel={$y$}, zlabel={$z$},
    samples=31, samples y=31,
    z buffer=sort
  ]
    \addplot3[domain=\xl:\xu,y domain=\yl:\yu, surf,  restrict z to domain = -9:9, %colormap/bone,  
    opacity=1, 
    colormap/gray
    %colormap={mygray}{
    %    color(0cm)=(gray!80)
    %    color(1cm)=(gray!15!white)
    %},
    ]
    {\yu*x + y*\xu - \yu*\xu};
    
    \addplot3[domain=-2:2,y domain=-2:2, restrict z to domain = -4:4, surf,      colormap={myblue}{
    color(0cm)=(blue!80)    
    color(1cm)=(blue!15!white)  
    },
    opacity=0.9, 
    ]
    {x*y};
    
    \addplot3[domain=\xl:\xu,y domain=\yl:\yu, restrict z to domain = -9:9, surf, colormap/gray,  
    opacity=1, 
    ]
    {\yl*x + y*\xl - \yl*\xl};

  \end{axis}
\end{tikzpicture} 
		\caption{Convex piecewise linear underestimator (gray) for 
		the term $xy$ (blue) with the choices
        $-x_\ell= x_u = -y_\ell=y_u = 3$.
        }
        \label{fig:conv_under}
	\end{subfigure}
	\hfill
	\begin{subfigure}[t]{0.45\textwidth} 
        \begin{tikzpicture}
  \begin{axis}[
    view={20}{12},
    xlabel={$x$}, ylabel={$y$}, zlabel={$z$},
    samples=31, samples y=31,
    z buffer=sort
  ]
    %\addplot3 [domain=\xl:\xu,y domain=\yl:\yu, surf,  fill =blue, opacity=0.8, 
    %]
    %{min(\yu*x + y*\xl - \yu*\xl,\yl*x + y*\xu - \yl*\xu)};

    %\addplot3 [domain=\xl:\xu,y domain=\yl:\yu, surf, restrict z to domain = -6.25:6.25, fill =orange!70, opacity=1, 
    %]
   % {\yu*x + y*\xl - \yu*\xl};

    %%hitere blaue
    \addplot3 [domain=\xl:\xu,y domain=\yl:\yu, surf, restrict z to domain = -9:9, colormap/gray,  opacity=\greynocapity, 
    ]
    {\yu*x + y*\xl - \yu*\xl};

    %%hitere grüne
    \addplot3[domain=\xlo:\xuo,y domain=\ylo:\yuo, surf, restrict z to domain = \xloylu:\xluylu,%-6.25:6.25, 
    colormap={myorange!80}{
    color(0cm)=(orange)    
    color(1cm)=(orange!15!white)  
    },  opacity=\orangeocapity, 
    ]
    {\yuo*x + y*\xlo - \yuo*\xlo};
    
    \addplot3[domain=\xl:\xu,y domain=\yl:\yu, surf, restrict z to domain = -9:9, colormap/gray,  opacity=\greynocapity, 
    ]
    {\yu*x + y*\xu - \yu*\xu};

    \addplot3[domain=\xlo:\xuo,y domain=\ylo:\yuo, surf, restrict z to domain = \xloylu:\xluylu,%-6.25:6.25, 
    colormap={myorange!80}{
    color(0cm)=(orange)    
    color(1cm)=(orange!15!white)  
    },  opacity=\orangeocapity, 
    ]
    {\yuo*x + y*\xuo - \yuo*\xuo};
    
    \addplot3[domain=-2:2,y domain=-2:2, restrict z to domain = -4:4, surf,  colormap={myblue}{
    color(0cm)=(blue)    
    color(1cm)=(blue!15!white)  
    },  opacity=1.0, 
    ]
    {x*y};
    
    \addplot3[domain=\xlo:\xuo,y domain=\ylo:\yuo, restrict z to domain = \xloylu:\xluylu,%-6.25:6.25, 
    surf,  colormap={myorange!80}{
    color(0cm)=(orange)    
    color(1cm)=(orange!15!white)  
    },  opacity=\orangeocapity, 
    ]
    {\ylo*x + y*\xlo - \ylo*\xlo};

    \addplot3[domain=\xl:\xu,y domain=\yl:\yu, restrict z to domain = -9:9, surf,  colormap/gray,  opacity=\greynocapity, 
    ]
    {\yl*x + y*\xl - \yl*\xl};

    %orange vorne
    \addplot3[domain=\xlo:\xuo,y domain=\ylo:\yuo, restrict z to domain = \xloylu:\xluylu,%-6.25:6.25, 
    surf,  fill =orange!50,  opacity=\orangeocapity, 
    ]
    {\ylo*x + y*\xuo - \ylo*\xuo};

    %blue vorne
    \addplot3[domain=\xl:\xu,y domain=\yl:\yu, restrict z to domain = -9:9, surf,  colormap/gray,  opacity=\greynocapity, 
    ]
    {\yl*x + y*\xu - \yl*\xu};

  \end{axis}
\end{tikzpicture} 
    \caption{McCormick envelope for the term $xy$
    	(in blue)
    before (gray) and after
    OBBT (orange) with initial bounds
    $-x_\ell= x_u = -y_\ell=y_u = 3$
    and tight bounds $\bar{x}_\ell= \bar{x}_u = \bar{y}_\ell=\bar{y}_u = 2$.}
    \label{fig:mcc_obbt}
	\end{subfigure}
\end{center}
\caption{McCormick envelope for a real finite-dimensional biliniarity $z = xy$  in $[-2,2]^2\times [-4,4]$ (blue)
for 
    $x_\ell \leq x \leq x_u, y_\ell \leq y \leq y_u$.}
\label{fig:mcc}
\end{figure}
The disadvantage of the family of inequalities in \eqref{eq:mcc}
is that they add pointwise linear mixed state-control constraints to the problem, which results in many additional inequalities after discritization.
This becomes more aggravated if one decides to improve the approximation 
by tightening the bounds on $u$ (see Remarks 2.3, 2.6 in \cite{manns}).
For this reason, the next stage performs a partial discretization of
the bilinear term $uw$ on an---ideally relatively coarse---grid before
the derivation of the McCormick inequalities. 

\begin{remark}
    Here, one has a choice for the McCormick inequalities, specifically one can choose between the following three options
    \begin{enumerate}[label=(O\arabic*)]
        \item\label{opt : 1} $z \coloneqq uw$,
        \item\label{opt : 2}  $z \coloneqq f_2 uw$,
        \item\label{opt : 3}  $z \coloneqq f_1 u + f_2 uw$.
    \end{enumerate}
	Since the arguments are completely analogous and all McCormick inequalities that are valid for \ref{opt : 2} and \ref{opt : 3} are also valid for \ref{opt : 1}, we consider only \ref{opt : 1} in this work.
\end{remark}

\paragraph{Stage 2.}
We define the abstract PDE with locally averaged bilinear term as
\begin{gather} \label{eq : ocp_proj}
\begin{aligned} &- \div(\tilde{A} \nabla u) + f_1 u + f_2 (P_h u)(P_h w) = f_0 &&\text{ in } \Omega\\
    &u = g_0 &&\text{ on } \Gamma_1, \\
    &\frac{\partial u}{\partial n} = g_1(u) &&\text{ on } \Gamma_2.
\end{aligned} 
\end{gather}
The operator $P_h$ maps to a constant value
per grid cell of the partition of $\Omega$ so that the locally averaged bilinear term reads
\begin{gather}\label{eq : ocp_proj_nh}
\begin{aligned}
    (P_h u)(P_h w)
   &= \sum_{i \in \{1,\ldots,N_h\}} 
   \Big(\frac{1}{\meas Q_h^i} \int_{Q_h^i} u\Big)
   \Big(\frac{1}{\meas Q_h^i} \int_{Q_h^i} w\Big)   
   \1_{Q_h^{i}},
\end{aligned}
\end{gather}
where $\calQ_{h} = \{Q^1_h,\ldots,Q^{N_h}_h\}$
with mesh size $h$ for some $N_h \in \N$ is a partition
of $\Omega$.

Applying the McCormick relaxation approach from Stage 1
on each summand in \eqref{eq : ocp_proj_nh} yields the following approximation of the linear
 inequality system \eqref{eq:mcc}:
\begin{gather}\label{eq:mcch}
\begin{aligned}
    & - \div(\tilde{A} \nabla u) + f_1 u + f_2 z = f_0 &&\text{ in } \Omega, \\
    &u = g_0 &&\text{ on } \Gamma_1, \\
    &\frac{\partial u}{\partial n} = g_1(u) &&\text{ on } \Gamma_2,\\
    &z = \sum_{i=1}^{N_h} z_i \1_{Q^i_h},\\
&z_i \ge u_\ell^i (P_h w) + (P_h u) w_\ell^i - u_\ell^i w_\ell^i
\enskip && \text{ on }Q^i_h\text{ for all } i \in \{1,\ldots,N_h\},\\ 
&z_i \ge u_u^i (P_h w) + (P_h u) w_u^i - u_u^i w_u^i
\enskip && \text{ on }Q^i_h\text{ for all } i \in \{1,\ldots,N_h\},\\ 
&z_i \le u_u^i (P_h w) + (P_h u) w_\ell^i - u_u^i w_\ell^i
\enskip && \text{ on }Q^i_h\text{ for all } i \in \{1,\ldots,N_h\},\\ 
&z_i \le u_\ell^i (P_h w) + (P_h u) w_u^i - u_\ell^i w_u^i
\enskip && \text{ on }Q^i_h\text{ for all } i \in \{1,\ldots,N_h\},\\ 
& u_\ell^i \le P_h u \le u_u^i
\enskip && \text{ on }Q^i_h\text{ for all } i \in \{1,\ldots,N_h\},\\
&w_\ell \le w \le w_u\enskip\text{a.e.}\\
\end{aligned}
\end{gather}
The McCormick inequalities are defined per cell $Q_h^i$ and thus their 
number remains constant during a subsequent discretization of the PDE.
We note that the bound constraints on $w$ remain valid for $P_h w$,
but we now require bounds on $P_h u$ per cell $Q_h^i$.

While it is already computationally meaningful to perform OBBT
on $P_h u$ at this point, we proceed with an approximation of the
control variable $w$, which may be used to balance accuracy and 
approximation quality \cite{kaltenbacher2026locally}.

\paragraph{Stage 3.} We discretize $w$ as piecewise constant functions
on a further partition $\tilde{\calQ}_{\tau} = \{ \tilde{Q}_\tau^1,\ldots,\tilde{Q}_\tau^{N_\tau}\}$
 with mesh size $\tau$ for some $N_\tau \in \N$, that is, we choose
 the ansatz
\[ w = \sum_{j=1}^{N_\tau} \1_{\tilde{Q}_\tau^j} w_j,
\quad w_j \in [w_\ell, w_u] \text{ for } j \in \{1,\ldots,N_\tau\}.
\]
We assume that $\tilde{\calQ}_{\tau}$ is embedded into $\calQ_h$.
Specifically, each grid cell $Q_h^i$ is the finite union of grid
cells $\tilde{Q}_\tau^j$. The resulting approximation of \eqref{eq:mcch}
reads
\begin{gather}\label{eq:pdehtau}
\begin{aligned}
    & - \div(\tilde{A} \nabla u) + f_1 u + f_2 z = f_0 &&\text{ in } \Omega, \\
    &u = g_0 &&\text{ on } \Gamma_1, \\
    &\frac{\partial u}{\partial n} = g_1(u) &&\text{ on } \Gamma_2,\\
& w = \sum_{j=1}^{N_\tau} w_j \1_{\tilde{Q}^j_h}, z = \sum_{i=1}^{N_h} z_i \1_{Q^i_h},\\
&z_i \ge u_\ell^i (P_h w) + (P_h u) w_\ell^i - u_\ell^i w_\ell^i
\enskip && \text{ on }Q^i_h\text{ for all } i \in \{1,\ldots,N_h\},\\ 
&z_i \ge u_u^i (P_h w) + (P_h u) w_u^i - u_u^i w_u^i
\enskip && \text{ on }Q^i_h\text{ for all } i \in \{1,\ldots,N_h\},\\ 
&z_i \le u_u^i (P_h w) + (P_h u) w_\ell^i - u_u^i w_\ell^i
\enskip && \text{ on }Q^i_h\text{ for all } i \in \{1,\ldots,N_h\},\\ 
&z_i \le u_\ell^i (P_h w) + (P_h u) w_u^i - u_\ell^i w_u^i
\enskip && \text{ on }Q^i_h\text{ for all } i \in \{1,\ldots,N_h\},\\ 
& u_\ell^i \le P_h u \le u_u^i &&  \text{ on }Q^i_h\text{ for all } i \in \{1,\ldots,N_h\},\\
&w_\ell \le w \le w_u\enskip\text{a.e.},
\end{aligned}
\end{gather}
where we observe that $P_h w = P_\tau w = w_i$ on $Q_h^i$ holds for all
$i \in \{1,\ldots,N_h\}$ if the partitions $\{Q_h^i\}$ and $\{\tilde{Q}_\tau^j\}$
coincide, that is, if the same discretization grids are used for the
bilinearity and $w$. If this is not the case, $P_h w$ is a finite
sum on $Q_h^i$.

While this ansatz implies $w = P_\tau w$, we now have to take extra care of
the total variation term since, for a given $w \in L^1(\Omega)$,
$\TV(P_\tau w) \to \TV(w)$ does not necessarily hold as $\tau \to 0$;
see, e.g., Example 2.10 in \cite{schiemann2025discretization}. Consequently, we require a
discretization, which we denote by $\TV_{\tau}$, of the $\TV$-functional
that does not have this problem; see \cite{caillaud2023error,chambolle2017accelerated}
or, in the case of additional discreteness or integer restrictions on $w$,
\cite{schiemann2025discretization} for such discretizations.

The resulting optimization problem reads
\begin{gather} \label{eq : mcchh}
\begin{aligned}
\min_{\substack{u, w_1,\ldots,w_{N_\tau} \\
		z_1,\ldots,z_{N_h}}}\ & j(u, w) + \alpha \text{TV}_\tau(w)\\
\text{s.t. } &u,w,z \text{ satisfy } \eqref{eq:pdehtau} \\
&u \in H^1(\Omega, \K),\; 
z_1,\ldots,z_{N_h} \in \K,\;
w_1,\ldots,w_{N_\tau} \in \R.
\end{aligned}\tag{McC$_{h\tau}$}
\end{gather}
\begin{remark}
The assumption that the grid with mesh size $\tau$ is embedded into the grid with mesh size $h$ is not necessary for most of our arguments.
but we use it here for simplicity of the argument and to provide a more intuitive understanding. This assumption has also been made to derive the approximation
results in \cite{kaltenbacher2026locally}.
\end{remark}

\paragraph{OBBT and Approximate Lower Bounds.}
We can apply
the OBBT Algorithm 1 from \cite{manns} on \eqref{eq : mcchh} to reduce its feasible set.
The idea of the algorithm is to solve a sequence of linear problems
\begin{equation}\label{eq : obbt_problems}\tag{OBBT$_{\ell / u }(i)$} 
    \tilde{u}_\ell^i / \tilde{u}_u^i \leftarrow \min / \max \big\{(P_h u_h)|_{Q^i_h} : (u_h, w_\tau, z_h)\text{ is feasible for \eqref{eq : mcchh}}\big\}
\end{equation} over all partitions $Q^i_h$. 
As in the approximation stage 1, in the case $\K =\C$,
 we define the operators $\max$ and $\min$ component-wise over 
 the real and imaginary part
such that
$\max\{a,b\} = \max\{\Re a,\Re b\} + i \max\{\Im a,\Im b\} $ and 
$\min\{a,b\} =\min\{\Re a,\Re b\} + i \min\{\Im a,\Im b\} $ for complex-valued functions $a,b$. 
In such a way we actually need to (repeatedly) solve $4N_h$ problems in the complex case.
Subsequently, the bounds $u_l^i, u_u^i$ on $(P_h u_h)$ are being updated
 with the obtained values $\tilde{u}_\ell, \tilde{u}_u$ and the bound-tightening procedure is being repeated. 
 The values of $\tilde{u}_\ell, \tilde{u}_u$ are well defined;
 see Lemma 2.17 in \cite{manns}.
Let us denote the new feasible set obtained after the $n$-th round of bound-tightening by $\mathcal{F}^n$. 
Then, the sequence of induced approximate McCormick relaxations
\begin{gather}\label{eq:mcchhn}
 \underset{\substack{u, w_1,\ldots,w_{N_\tau}  \\
		z_1,\ldots,z_{N_h}}}{\min}\ j(u, w) + \alpha\TVt(w)
\quad\text{\emph{s.t.}}\quad
(u, w_1,\ldots,w_{N_\tau} , z_1,\ldots,z_{N_h})  \in \calF^n,
\tag{McC$_{h\tau}^n$}
\end{gather}
provides approximate lower bounds on \eqref{eq : ocpe} that satisfy
\begin{gather}\label{eq:mcchh_approx_lb_ocp}
m_{\eqref{eq : ocpe}} \ge m_{\eqref{eq:mcchhn}} - L_u^n \| \bar{u} -  \bar{u}_h\|_{L^2(\Omega, \K)}
- L_w \sqrt{(w_u - w_\ell)}n^\frac{1}{4}
  \TV(\bar{w})^\frac{1}{2} \tau^\frac{1}{2},
\end{gather}
where $m_{\eqref{eq : ocpe}}$ is the infimum of \eqref{eq : ocpe},
$(\bar{u},\bar{w})$ is a minimizer of \eqref{eq : ocpe},  
$L_u^n$ and $L_w$ are the Lipschitz constants of $j$ with respect to its first and second argument on $\mathcal{F}^n \cup \argmin \eqref{eq : ocpe}$ and $W_{ad}$ resp.\ \cite[Theorem 2.11]{manns}
(note that $U$ 
therein
 can be replaced by $L^2(\Omega, \K)$ without changing the proof).
The error term can be estimated a priori; see \eqref{eq : apriori}.

\paragraph{Assumptions.}
In order to apply OBBT to \eqref{eq : mcchh} and to estimate the approximation error, the original problem \eqref{eq : ocpe} 
and its approximation \eqref{eq : mcchh} have to satisfy several assumptions, which
we split into three blocks. The first block concerns the partitions $\calQ_h$.

\begin{assumption}[Assumptions on grid cells, Assumption 2.7 from \cite{manns}] \label{ass : grid_cells} 
We assume the following. 
    \begin{enumerate}
     \item $\{\calQ_h\}_h$ is a sequence of partitions of the domain $\Omega$ with mesh sizes $h$, 
     \item \label{itm:bounded_eccentricity} $\{\calQ_h\}_h$ satisfies a bounded eccentricity condition, that is, there exists $C > 0$ such that for each $Q_h^i$ there is a closed ball $B_h^i$ such that
     $Q_h^i \subset B_h^i$ and $\meas B_h^i \le C \meas Q_h^i$.
     \end{enumerate}
\end{assumption}
The next block of assumptions ensures the existence, boundedness, and approximability of the solutions of the original and locally averaged PDEs. 
\begin{assumption}[Assumptions on PDEs under assumptions on grid cells]\label{ass : pdes} 
In addition to \cref{ass : pdes}, we assume the following.
    \begin{enumerate}
        \item\label{ass : wu_bounds_exist} There are bounds $u_\ell$, $u_u \in L^\infty(\Omega, \K)$
        such that $u_\ell \le u \le u_u$ a.e.\ holds for solutions $u$ to \eqref{eq : abstract_pde}
        uniformly over all $w \in \wad$. 
        \item\label{ass : phwphu_pdesolexists} The PDE and the one with locally averaged bilinear term \eqref{eq : abstract_pde} and \eqref{eq : ocp_proj} 
        admit unique solutions for all $w \in \wad$. 
        \item\label{ass : richness_of_feasible_set} For fixed $h$ and all $Q_h^i \in \calQ_h$,
        the bounds $u_\ell^i$, $u_u^i \in \K$
        satisfy
        $u_\ell^i \le (P_h u_h)|_{Q_h^i} \le u_u^{i}$ for all
        $u_h$
        that solve \eqref{eq : ocp_proj} for $w \in \wad$.  
        \item\label{itm:ubnd_limits} $u_{\ell,h} \coloneqq \sum_{i=1}^{N_h} u_\ell^i \1_{Q_h^i}$ and
      $u_{u,h} \coloneqq \sum_{i=1}^{N_h} u_u^i \1_{Q_h^i}$ satisfies
      $u_{\ell,h} \to u_\ell$ in $L^2(\Omega, \K)$ and $u_{u,h} \to u_u$ in $L^2(\Omega, \K)$.
        \item\label{itm:uuh_approx} Let $u$ solve \eqref{eq : abstract_pde} for some $w \in \wad$.
        Then $u_h \to u$ in $H^1(\Omega, \K)$ holds for the solutions $u_h$ to 
        \eqref{eq : ocp_proj} for $w$ for $h\searrow 0$. 
    \end{enumerate}
\end{assumption}
The last block of assumptions concerns the correct approximation of the total variation. We give a more general assumption than Assumption 2.12 in \cite{manns}
that allows us to cover all of the three aforementioned discretizations
of $\TV$ from \cite{caillaud2023error,schiemann2025discretization}.
\begin{assumption}[Assumptions on the approximation of $\TV$] \label{ass : bv} 
We assume that there exists a sequence of approximations $(\TV_\tau)_\tau$ of $\TV$ in the following sense:
\begin{enumerate}
    \item \label{itm:consistency} For all $\tau$, $\TV_\tau : \wad \rightarrow [0, \infty]$ is lower semicontinuous with respect to
    $L^1$-convergence,  
    $\TV_\tau \circ P_\tau \leq \TV$, and $\TV_\tau \circ P_\tau \leq \TV_\tau$,
    \item \label{itm:TVh_compactness} $\sup_{\tau\searrow 0} \TV_\tau(w_\tau) \le C$ 
    and $\sup_{\tau\searrow 0} \|w_\tau\|_{L^1(\Omega)} \le C$ for some $C > 0$ with $w_\tau = P_{\tau} w_\tau$ imply
    a subsequence $\{w_{\tau_k}\}_{k}$ and $w \in \BV(\Omega)$ such that
    $w_{\tau_k} \to w$ in $L^1(\Omega)$,
    \item \label{itm:TVh_gamma} $\TV_\tau \circ P_{\tau}$ $\Gamma$-converges to $\TV$ on $\wad$ for $\tau \searrow 0$
    with respect to $L^1$-convergence.
\end{enumerate}
\end{assumption}
\begin{remark}
If $\wad$ is of the form $\wad = \{ w \in L^1(\Omega) :
w(x) \in \{\omega_1,\ldots,\omega_M\} \text{ a.e.}\}$
for finitely many $\omega_1$, $\ldots$, $\omega_M \in \R$,
one may adapt the discretization from 
\cite{schiemann2025discretization}.
Then \cref{ass : bv} can still be satisfied
if one defines $\TV_\tau \coloneqq 
\max\{\frac{1}{c}\TV, \TV_{\sigma} \}$
with $\TV_\sigma$ defined in \cite{schiemann2025discretization}
(use $h = \sigma$ therein) when
$\tau/\sigma \searrow 0$ as $\sigma \searrow 0$
and $\tau \searrow 0$;
see the results in \S3 in \cite{schiemann2025discretization}.
\end{remark}
\section{Verification of the assumptions for the model problems} \label{sec : verification_of_ass}
We now verify the remaining required assumptions. We start with the verification of 
\cref{ass : pdes} for \eqref{eq : ocpe}, for which we derive the $L^\infty$-bounds
on the state variable such that we can observe, which constants enter the
this global estimate. For \eqref{eq : ocpc} we build on higher regularity, that is, $H^2$-regularity, from existing literature and use a bootstrapping argument
and do not elaborate on the constants with the same level of detail. While
it is of course also possible to derive them, we note that they become very
cumbersome and we thus leave it to the interested reader.
\subsection{Verification of the assumptions for \texorpdfstring{\eqref{eq : ocpe}}{OCPE}}\label{subsec : verification_of_ass_ocpe}
We recall the constraining elliptic PDE in the weak form
\begin{equation} \label{eq : elliptic_pde} 
\int_\Omega  A \nabla u \cdot \nabla v + \int_\Omega  uwv = \int_\Omega fv \text{ for all } v \in H_0^1(\Omega).
\end{equation}
and observe that it admits a unique solution for all $w \in\wad$.

Specifically, it holds that
$\int_\Omega  A \nabla u \cdot \nabla u + \int_\Omega w u^2 \ge (\beta + \min\{c_p^2 w_\ell,0\})\|\nabla u\|_{L^2(\Omega)}^2$,
where $\beta + \min\{c_p^2 w_\ell,0\} \ge 0$ holds by assumption so that we can apply the Lax--Milgram lemma. 
This also yields the estimation $\| \nabla u \|_{L^2(\Omega)}\le c_1(c_p, w_\ell, \beta)^{-1} \| f\|_{L^2(\Omega)} $ 
for $c_1(c_p, w_\ell, \beta)^{-1} \coloneqq c_p /(\beta + \min\{c_p^2 w_\ell,0\})$.

\paragraph{\Cref{ass : pdes} \ref{ass : wu_bounds_exist}.}
We show that \Cref{ass : pdes} \ref{ass : wu_bounds_exist} is satisfied for
all choices
$u_\ell$, $u_u$
such that
\begin{equation}
u_\ell \le \underline{u}_\ell \coloneqq -c_2(c_p, w_\ell, \beta, \Omega)^{-1}\|f\|_{L^2 (\Omega)} \in \R
\end{equation}
and
\begin{equation}
u_u \ge \overline{u}_u \coloneqq c_2(c_p, w_\ell, \beta, \Omega)^{-1}\|f\|_{L^2 (\Omega)} \in \R,
\end{equation}
where 
\[
c_2(c_p, w_\ell, \beta, \Omega)^{-1} \coloneqq \frac{1}{\min\bigr\{c_p^2 w_\ell, 0\bigr\} + \beta} \cdot \begin{cases} \frac{1}{2} &\text{if }n = 1, \\
8 C_0^{imb}(6)C_0^{imb}(4) (\meas \Omega)^\frac{1}{12}  &\text{if }n \in \{2,3\}.
\end{cases}
\]
The bounds for the one-dimensional case can be obtained directly from the imbedding
$H_0^1(\Omega) \hookrightarrow L^\infty(\Omega)$ and the $H_0^1(\Omega)$-estimate on $u$
due to the Lax--Milgram lemma \cite{manns}.

The multidimensional case requires another approach since the imbedding $H_0^1(\Omega) \hookrightarrow L^\infty(\Omega)$ 
exists only in the one-dimensional case. We follow an argument due to
Stampacchia and Kinderlehrer, specifically \cite[Lemma~B.2]{kinderlehrer} and \cite[Theorem~4.5]{troltzsch2010optimal}.
Because their settings do not cover the case $w_\ell < 0$,
we briefly extend their argument below.

\begin{theorem}\label{thm:ocpe_bound}
Let $-\frac{\beta}{c_p^2} < w_\ell < w_u$. Then, for all $w \in L^\infty(\Omega)$ with $w_\ell \le w \le w_u\enskip\text{a.e.}$, 
%Then, 
the solution $u$ to \eqref{eq : elliptic_pde} satisfies
$u_\ell \le u \le u_u$ for $u_\ell$, $u_u$ above.
\end{theorem}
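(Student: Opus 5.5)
The plan is to run Stampacchia's truncation argument, following \cite[Lemma~B.2]{kinderlehrer} and \cite[Theorem~4.5]{troltzsch2010optimal}. The only change is in the coercivity step, where the possibly negative zero-order term $wu$ is absorbed via the Poincaré inequality. By symmetry (replace $u$ by $-u$ and $f$ by $-f$) it suffices to prove the upper bound $u \le \overline{u}_u$ a.e.

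For $k \ge 0$, test \eqref{eq : elliptic_pde} with $v_k \coloneqq (u-k)^+ \in \Hz$. This is admissible since $u \in \Hz$ and $k\ge 0$. Let $A(k) \coloneqq \{x \in \Omega : u(x) > k\}$. Then $\nabla v_k = \nabla u\,\1_{A(k)}$ and $u = v_k + k$ on $A(k)$, so
\[
\int_\Omega A\nabla v_k\cdot\nabla v_k + \int_\Omega w v_k^2 = \int_{A(k)} f v_k - k\int_{A(k)} w v_k .
\]
On the left, $\int w v_k^2 \ge \min\{w_\ell,0\}\|v_k\|_{L^2}^2 \ge \min\{c_p^2 w_\ell,0\}\|\nabla v_k\|_{L^2}^2$ by Poincaré, exactly as in the Lax--Milgram estimate above. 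Hence the left-hand side is at least $\gamma\|\nabla v_k\|^2$ with $\gamma \coloneqq \beta + \min\{c_p^2 w_\ell,0\} > 0$.

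The term $-k\int_{A(k)} w v_k$ is the point where the argument departs from the cited sources, since there $w\ge 0$ makes it nonpositive. To handle it, I will first treat $w_\ell\ge0$, where the term can be dropped. For $w_\ell<0$, I would rather avoid it by a cleaner route: rewrite the equation as $-\div(A\nabla u) + (w - w_\ell)u = f - w_\ell u$, i.e., with nonnegative potential and a modified right-hand side. Alternatively, absorb $-k w_\ell \int_{A(k)} v_k$ into the right-hand side as an extra $L^2$ source term $|w_\ell|\,k$. Since the stated constant involves only $\|f\|_{L^2}$ and $\gamma$, I expect the intended route to bound $|w_\ell|\,\|u\|$ by the energy estimate $\|\nabla u\|\le c_1^{-1}\|f\|$. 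This is the step I expect to be the main obstacle: the extra source is controlled in $L^2$ by $c_p c_1^{-1}\|f\|$ but not in a better $L^p$ space, so I would need to check carefully that the truncation iteration closes with only $L^2$ data in $n\le3$.

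Assuming the right-hand side is $\int_{A(k)} \tilde f v_k$ with $\tilde f \in L^2$, Hölder and the Sobolev embedding $H_0^1\hookrightarrow L^6$ (constant $C_0^{imb}(6)$) give
\[
\gamma\|\nabla v_k\|^2 \le \|\tilde f\|_{L^2}\, \|v_k\|_{L^6}\, |A(k)|^{1/3} \le C_0^{imb}(6)\|\tilde f\|_{L^2}\|\nabla v_k\|\,|A(k)|^{1/3}.
\]
For $h>k$, $(h-k)|A(h)|^{1/q} \le \|v_k\|_{L^q}$. Using the $L^4$ embedding constant $C_0^{imb}(4)$ and the measure factor $(\meas\Omega)^{1/12}$ to pass between exponents, I obtain a recursion of the form
\[
|A(h)| \le \Big(\tfrac{C\|\tilde f\|}{\gamma(h-k)}\Big)^{q}|A(k)|^{\theta}, \qquad \theta>1 .
\]
The standard Stampacchia lemma (\cite[Lemma~B.1]{kinderlehrer}) then yields $|A(k_0+d)|=0$ with $d = 2^{\theta/(\theta-1)}\,C\|\tilde f\|\gamma^{-1}|A(0)|^{(\theta-1)/q}$. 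The constant $8\,C_0^{imb}(6)C_0^{imb}(4)(\meas\Omega)^{1/12}$ should come out of this bookkeeping, with the factor $8 = 2^{\theta/(\theta-1)}$ for the chosen exponents. Combining this with the symmetric lower bound gives $\underline{u}_\ell\le u\le\overline{u}_u$, and hence the claim for any $u_\ell\le\underline u_\ell$, $u_u\ge\overline u_u$.
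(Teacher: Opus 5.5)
Your main route, the potential shift, is correct. With the bookkeeping below fixed, it gives exactly the stated constant, but it is a different route from the paper's.

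\textbf{How the two routes differ.} The paper keeps $w$ on the left-hand side and rescales the truncation test function instead: $\zeta_M = M\sgn(u)\max(|u|-k,0)$ with $M=\gamma/\beta$ and $\gamma\coloneqq\beta+\min\{c_p^2w_\ell,0\}$. It aims at $\beta\|\nabla\zeta_M\|^2_{L^2(\Omega,\R^n)}\le\int_{E(k)}f\zeta_M$. The factor $M^2$ in the Sobolev lower bound then replaces $\beta$ by $\beta M=\gamma$ in the Stampacchia level.

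You instead write $-\div(A\nabla u)+(w-w_\ell)u=f-w_\ell u$. This reduces everything to the nonnegative-potential case of \cite[Theorem~4.5]{troltzsch2010optimal}, with coercivity constant $\beta$ and datum $\tilde f=f-w_\ell u\in L^2(\Omega)$. The energy estimate gives $\|\tilde f\|_{L^2(\Omega)}\le(1+c_p^2|w_\ell|/\gamma)\|f\|_{L^2(\Omega)}=(\beta/\gamma)\|f\|_{L^2(\Omega)}$, because $\gamma+c_p^2|w_\ell|=\beta$. Hence $\|u\|_{L^\infty(\Omega)}\le 8C_0^{imb}(6)C_0^{imb}(4)(\meas\Omega)^{1/12}\|f\|_{L^2(\Omega)}/\gamma$. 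This sharpens the bootstrapping in the remark after the theorem, which moves all of $wu$ to the right and therefore pays $\max\{-w_\ell,w_u\}$.

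\textbf{What your route buys.} Beyond brevity, it buys rigor. For $w_\ell<0$ the paper passes from $c_p^2w_\ell M\int_{E(k)}|\nabla u|^2$ to $w_\ell M\int_{E(k)}u^2$. That step needs $\int_{E(k)}u^2\le c_p^2\int_{E(k)}|\nabla u|^2$, a Poincar\'e inequality for $u$ itself on $E(k)$, where $u$ does not vanish on $\partial E(k)$. It fails, for example, for $u(x)=\sin x$ on $(0,\pi)$ as $k\nearrow 1$, whatever $c_p$ is; this $u$ is a solution for $A=1$, $w\equiv w_\ell$, $f=(1+w_\ell)\sin x$. Poincar\'e only controls $(|u|-k)^+$. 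The uncontrolled remainder is exactly the term $-k\int_{A(k)}wv_k$ you isolated, and your shift removes it.

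\textbf{Bookkeeping to land on the stated constant.}
\begin{enumerate}
\item Your ``main obstacle'' is not an obstacle. Data in $L^2(\Omega)$ is precisely the setting of the cited theorem for $n\le 3$, since $2>n/2$.
\item In the shifted equation you drop $\int_\Omega(w-w_\ell)uv_k\ge 0$, so the left-hand side is bounded below by $\beta\|\nabla v_k\|^2$, not $\gamma\|\nabla v_k\|^2$. Pairing $\gamma$ with $\|\tilde f\|\le(\beta/\gamma)\|f\|$ loses a factor $\beta/\gamma$. Your second alternative has the same loss: there you keep $\gamma$ and treat $k|w_\ell|\1_{A(k)}$ as a source, controlled through $k|A(k)|^{1/2}\le\|u\|_{L^2(\Omega)}$.
\item Your displayed H\"older split $L^2\cdot L^6\cdot L^3$ yields $\theta=2$ and the constant $4(C_0^{imb}(6))^2(\meas\Omega)^{1/6}$. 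For the stated constant, split as $L^2\cdot L^4\cdot L^4$ in the upper estimate, which gives $|A(k)|^{1/2}$ after squaring. Use $L^6$ only in the lower estimate $(h-k)|A(h)|^{1/6}\le\|v_k\|_{L^6}$. Then $\theta=3/2$, $2^{\theta/(\theta-1)}=8$, and $(\meas\Omega)^{(\theta-1)/6}=(\meas\Omega)^{1/12}$.
\item The $n=1$ constant $1/2$ does not come out of this argument. As in the paper, it follows from $H_0^1(\Omega)\hookrightarrow L^\infty(\Omega)$ and the energy estimate.
\end{enumerate}
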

\begin{proof}
In case $u \equiv  0$, the estimation is trivial. Therefore, we assume $u \not\equiv 0$.

First, we define the auxiliary variables and describe the idea.
Let $k >0$ be fix. We modify the test function from \cite[Lemma B.2]{kinderlehrer} by scaling it with $M \coloneqq \min\bigr\{\frac{c_p w_\ell}{\beta}, 0\bigr\} + 1$:
\begin{equation*}
\zeta_M \coloneqq M\sgn({u}) \max(|{u}|-k,0) = 
    \begin{cases}M({u}-k) & \text{ for } u\geq k, \\ 0 & \text{ for } |{u}| < k, \\ 
    M( {u}+k) & \text{ for } {u}> -k.
    \end{cases} 
\end{equation*}
We note that the scalar $M$ is positive due to the assumption $w_\ell > -\beta / c_p$.
We denote by $E(k)$ the set on which the test function attains a non-zero value:
\begin{equation*}
E(k) \coloneqq \{x \in \Omega : |u(x)| \geq k \}.
\end{equation*}
The derivative of $\zeta_M$ is the $M$-fold derivative of $u$ on the set $E(k)$
and zero elsewhere:
\begin{equation}\label{eq : zeta_der}
  \nabla \zeta_M =  \begin{cases}  M \nabla u & \text{ on } E(k) \\ 0 & \text{ else } |u| < k.\end{cases}.
\end{equation}
Our aim is to show that $E(k)$ is a null set for a sufficiently large $k > 0$. 
We will employ to this end the similarity of $ \nabla \zeta_M$ and $ \nabla {u}$ to show 
\begin{align} \label{eq: bound_zetaM}
\frac{M^2}{(C_0^{imb}(6))^2}(h-k)^2 [\meas E(h)]^{\frac{1}{3}} \leq \|\nabla \zeta_M\|^2_{L^2(\Omega,\R^n)}
 \leq\frac{(C_0^{imb}(4))^2}{ \beta^2} \|f\|_{L^2(\Omega)}^2 [\meas E(k)]^{\frac{1}{2}}
\end{align}
for all positive $h,k$ with $h > k > 0$, where the map $\meas(\cdot)$ and the constants 
$C_0^{imb}(\cdot)$ are defined in \eqref{sec:notation}.
Then, the inequality \eqref{eq: bound_zetaM} can be brought into the form required by
\cite[Lemma B.1]{kinderlehrer}, which implies $\meas E(h) = 0$ for all large enough $h$
and the claimed bounds on $u$.
\paragraph{Estimation of $\|\nabla \zeta_M\|^2_{L^2(\Omega,\R^n)}$ from below.} This estimation is based on the construction of $\zeta_M$ and the Sobolev imbedding theorem (\cite[Theorem 4.5]{troltzsch2010optimal}; choose $c \coloneqq \frac{(C_0^{imb}(6))^2}{M^2}$, $p \coloneqq 6$ therein and the modified test function $\zeta_M$, which causes
the factor $M^2$ in \eqref{eq: bound_zetaM}).

\paragraph{Estimation from $\|\nabla \zeta_M\|^2_{L^2(\Omega,\R^n)}$ above.} First we show the following estimation
\begin{equation*}
\begin{aligned} 
\beta \| \nabla \zeta_M \|^2_{L^2(\Omega,\R^n)} 
\leq \int_{E(k)} f \zeta_M, \label{eq : upper_estim}
\end{aligned} 
\end{equation*}
which corresponds in \cite[(7.8)]{troltzsch2010optimal} but must be obtained differently than in the steps (ii), (iii) therein. Then, the rest of the proof coincides with steps (iv) and (v) in \cite{troltzsch2010optimal} using our modified test function.
To this end, we show 
\begin{equation}\label{eq : fzeta_estim}
\begin{aligned} 
\beta \| \nabla \zeta_M \|^2_{L^2(\Omega,\R^n)} \leq  \int_{E(k)} A\nabla  {u} \cdot \nabla \zeta_M   +   \int_{E(k)} w {u} \zeta_M.
\end{aligned} 
\end{equation}

Because of the ellipticity of the PDE and due to the construction of $\nabla \zeta_M$ (see \eqref{eq : zeta_der}), 
we can estimate $\beta \| \nabla \zeta_M \|^2_{L^2(\Omega,\R^n)}$ as:
\begin{equation*} 
\begin{aligned}
\beta\| \nabla \zeta_M \|^2_{L^2(\Omega,\R^n)} 
\leq \int_{\Omega} A \nabla  \zeta_M \cdot \nabla \zeta_M
= \int_{E(k)} A (M \nabla  {u}) \cdot \nabla \zeta_M . 
\end{aligned}
\end{equation*}
As the constant $M$ attains values depending on the sign of $w_\ell$, we distinguish between the following subcases:

\subparagraph{$w_\ell$ is nonnegative.} We can overestimate the right-hand side by the left-hand side of the PDE \eqref{eq : elliptic_pde} with $w_\ell$ instead of $w$. 
It holds $M = 1$ in the considered case and thus we obtain
\begin{equation*}
\begin{aligned} 
\beta\| \nabla \zeta_M \|^2_{L^2(\Omega,\R^n)}
&\leq  \int_{E(k)} A \nabla  {u} \cdot \nabla \zeta_M   +   \int_{E(k)} w_\ell {u} \zeta_M \\
&\overset{\mathclap{w_\ell \le w \text{ a.e.}}}{\le}\quad \int_{E(k)} A \nabla  {u} \cdot \nabla \zeta_M   +\int_{E(k)} w u \zeta_M.
\end{aligned} 
\end{equation*}

\subparagraph{ $w_\ell$ is negative.} We use the construction of $M$ to obtain the following sequence of estimations
\begin{equation*}
\begin{aligned}
\beta\| \nabla \zeta_M \|^2_{L^2(\Omega,\R^n)}
= &\int_{E(k)} A \nabla  {u} \cdot \nabla \zeta_M +  (M-1) \int_{E(k)} A \nabla  {u} \cdot \nabla \zeta_M  && \scriptstyle{M = M + 1}\\
\leq &\int_{E(k)} A \nabla  {u} \cdot \nabla \zeta_M + \frac{c_p^2 w_\ell}{\beta}\int_{E(k)}  A \nabla  {u} \cdot \nabla \zeta_M && \scriptstyle{M = c_p^2 w_\ell + 1}\\
=& \int_{E(k)} A \nabla  {u} \cdot \nabla \zeta_M  + \frac{c_p^2 w_\ell}{\beta} M \int_{E(k)}  A \nabla  {u} \cdot \nabla u.  && \scriptstyle{ \nabla \zeta_M = M\nabla {u} \text{ on }E(k)}
\end{aligned}
\end{equation*}
Because of the negative sign of $w_\ell$, we need to find a lower bound on the second summand.
For this reason, we use here the elliptic properties of the operator $A$,  Poincaré inequality with a constant $c_p$, and an auxiliary estimation in \cref{thm : aux_estim_M} to transform the
second summand,
\begin{equation*}
\begin{aligned}
\beta\| \nabla \zeta_M \|^2_{L^2(\Omega,\R^n)} 
\leq &\int_{E(k)} A \nabla  {u} \cdot \nabla \zeta_M   +  c_p^2 w_\ell  M\int_{E(k)}  \|\nabla  u\|_{\R^n}^2  &&\scriptstyle{w_\ell < 0 \text{ by assumption}\Rightarrow \atop \text{Elliptic cond. has the reverse ineq.\ sign}}\\
\leq &\int_{E(k)} A \nabla  {u} \cdot \nabla \zeta_M   +  w_\ell M\int_{E(k)}  u^2  &&\scriptstyle{w_\ell < 0 \text{ by assumption}\Rightarrow \atop \text{Poincaré ineq.\ has the reverse ineq.\ sign}}\\
\leq & \int_{E(k)} A \nabla  {u} \cdot \nabla \zeta_M   + w_\ell \int_{E(k)}  {u}\zeta_M &&\scriptstyle{\text{\cref{thm : aux_estim_M}}} \\
\leq &\int_{E(k)} A \nabla  {u} \cdot \nabla \zeta_M   +\int_{E(k)} w u \zeta_M,  &&\scriptstyle{w_\ell \le w \text{ a.e.}}
\end{aligned}
\end{equation*}
yielding \eqref{eq : fzeta_estim}.

Having now established \eqref{eq : fzeta_estim} for both (that is all) cases, we obtain
\[ \beta\| \nabla \zeta_M \|^2_{L^2(\Omega,\R^n)} \overset{\text{PDE }\eqref{eq : elliptic_pde}}{\le} \int_{E(k)} f \zeta_M .
\]

Further we use the estimations from (iv), (v) from the proof of \cite[Theorem 4.5.]{troltzsch2010optimal}. 
Combination of these two steps yields
\begin{align*}
\|\nabla \zeta_M\|^2_{L^2(\Omega,\R^n)} \leq\frac{(C_0^{imb}(4))^2}{\beta^2} \|f\|_{L^2(\Omega)}^2 [\meas E(k)]^{\frac{1}{2}}.
\end{align*}

\paragraph{Application of Lemma B.1\@ from \cite{kinderlehrer}.} We multiply the right- and the left-hand sides of the double-inequality \eqref{eq: bound_zetaM} by an appropriate constant and evaluate them to the power of $3/2$ 
to obtain for all $h > k > 0$
\begin{equation*}
     [\meas E(h)]\leq  \biggr[\frac{(C_0^{imb}(6))^2(C_0^{imb}(4))^2}{\beta^2 M^2}\|f\|_{L^2(\Omega)}^2 \biggr]^3\frac{1}{(h-k)^6}  [\meas E(k)]^\frac{6}{4}.
\end{equation*} 
This estimation corresponds to the first estimation in \cite[p. 363]{troltzsch2010optimal} and allows to apply \cite[Lemma B.1.]{kinderlehrer}.
Observing that $\meas \Omega = 1$, we obtain the desired estimation
 \begin{align*}
    &\meas E(d) = 0 \\
    \text{for }&d = 8\frac{C_0^{imb}(6)C_0^{imb}(4)}{\beta M}\|f\|_{L^2(\Omega)}.
\end{align*}
\end{proof}

\begin{remark}
It is possible to obtain $L^\infty$-bounds $u_\ell$, $u_u$ on $u$
in fewer steps via a bootstrapping approach and using the results from
Theorem 4.5 in \cite{troltzsch2010optimal} directly.
This results in the (weaker) bound:
\begin{equation*}
    \| {u}\|_{L^\infty(\Omega)} \leq \frac{8}{\beta } C_0^{imb}(4) C_0^{imb}(6) \Biggl(1+\max \{-w_\ell,w_u\}  c_p \|f_0\|_{L^2 (\Omega)}  \frac{1}{\min \{c_p^2 w_\ell, 0\} + \beta} \Biggl).
\end{equation*} 
\end{remark}

We recall the locally averaged abstract PDE \eqref{eq : ocp_proj} for \eqref{eq : ocpe}
\begin{equation}\label{eq : ocpe_proj}
    \int_\Omega  A \nabla u \cdot \nabla v + \int_\Omega  (P_h u) (P_h w)v = \int_\Omega fv \text{ for all } v \in H_0^1(\Omega)
\end{equation}
and verify the corresponding assumptions.
\paragraph{\Cref{ass : pdes} \ref{ass : phwphu_pdesolexists}.} 
We observe that $P_h$ is non-expansive with respect to $L^1(\Omega), L^2(\Omega)$, and $L^\infty(\Omega)$ meaning
\begin{equation}\label{eq : nonexp}
    \|P_h u_h\|_{L^t(\Omega)} \leq \| u_h\|_{L^t(\Omega)} \text{ for } t = 1,2, \infty
\end{equation}
for all $h \in \{1, \ldots, N_h\}$ \cite[\S3.3]{manns}. 
Together with the boundedness of $w$, this implies the coercivity of the locally averaged bilinear 
form with the constant $\beta - c_p^2\max\{|w_u|, |w_\ell|\}$.
Consequently, the Lax--Milgram Lemma yields the existence of unique solutions to \eqref{eq : ocpe_proj} for all $w \in \BV(\Omega)$ with $w_\ell \le w \le w_u$ a.e. 
\begin{remark}
We observe that the applications of the Cauchy--Schwarz and Poincaré inequalities
on the left-hand side yield
\[
 \|\nabla {u}_h\|_{L^2(\Omega,\R^n)} \leq  \frac{c_p}{(\beta - \max\{-w_\ell,w_u\}c_p^2)}  \| f \|_{L^2(\Omega,\R^n)} \eqqcolon \frac{1}{c_3(c_p, w_\ell, w_u, \beta)} \| f \|_{L^2(\Omega,\R^n)}.
\]   
\end{remark}

\paragraph{\Cref{ass : pdes} \ref{ass : richness_of_feasible_set}.}
The non-expansiveness of $P_h$ also yields that it is sufficient to find the bounds on $u_h$ instead of $P_h u_h$. To this end, we use a bootstrapping argument together with \cite[Theorem 4.5]{troltzsch2010optimal} and obtain 
\begin{equation*}
\begin{aligned} 
\|{u_h}\|_{L^\infty (\Omega)} &\leq \frac{8 C_0^{imb}(4) C_0^{imb}(6) }{\beta }  \|f- (P_{h} w) (P_{h} {u_h}) \|_{L^2(\Omega)}.
\end{aligned}
\end{equation*}
Further application of the Hölder and Poincaré inequalities together with the $H_0^1(\Omega)$-bound on $u_h$ and the $L^\infty(\Omega)$-bounds on $w$ (which also hold for $P_h w$ because of the non-expansiveness of $P_h$) yield
\begin{equation}\label{eq:ocpe_locally_averaged_bound}
\begin{aligned} 
\|{u_h}\|_{L^\infty (\Omega)} &\leq \underbrace{\underbrace{\frac{8 C_0^{imb}(4) C_0^{imb}(6) }{\beta }  \biggr(1+\max \{-w_\ell,w_u\}  \frac{c_p}{\beta - \max\{-w_\ell, w_u\}c_p^2}\biggr)}_{\eqqcolon 1/c_4(c_p, w_\ell, w_u, \beta)}\|f\|_{L^2(\Omega)}}_{\eqqcolon u_h^*}.
\end{aligned}
\end{equation}

\paragraph{\Cref{ass : pdes} \ref{itm:ubnd_limits}.}
Since $P_h$ is non-expansive, $\|P_hu_h\|_{L^\infty} \le \|u_h\|_{L^\infty}$
holds.
Consequently, we can choose 
\[ u_{\ell,h} = u_\ell \le \min\{\underline{u}_\ell, -u_h^*\}
\quad\text{and}\quad
u_{u,h} = u_u \ge \min\{\overline{u}_u, u_h^*\}
\]
to certify the assumption while still complying with
\Cref{ass : pdes} \ref{ass : wu_bounds_exist}, \ref{ass : richness_of_feasible_set}. 

\paragraph{\Cref{ass : pdes} \ref{itm:uuh_approx}.} First we a priori estimate the PDE approximation
error induced by local-averaging as
\begin{equation}\label{eq : apriori}
    \|{u} - {u}_h\|_{L^2(\Omega)} \leq C_{3/2}^a(w) d_h^{3/2}
            + C_{3/2}^b d_h^2
\end{equation}
for some positive constants $C_{3/2}^a(w) \coloneqq C_{3/2}^a(n, w_\ell,w_u,f,w,\Omega)$, $C_{3/2}^b \coloneqq C_{3/2}^b(n, w_\ell,w_u,f,\Omega)$ and the diameter $d_h$ of the grid cells. Then, we show that this yields
the following estimate
\begin{equation}\label{eq:ocpe_apriori_gradl2}
\|\nabla(u - u_h)\|_{\Ln}^2 \leq \beta^{-1} \max \{ -w_\ell,w_u\} (c_1^{-1} +c_3^{-1}) \|f\|_{L^2(\Omega)} \bigr(C_{3/2}^a(w) d_h^{3/2}
+ C_{3/2}^b d_h^2\bigr).
\end{equation}
\begin{theorem} \label{thm: apriori}
Let $u$ solve \eqref{eq : elliptic_pde} and $u_h$ solve \eqref{eq : ocpe_proj} for the same $w \in \BV(\Omega)$ with $w_\ell \leq w \leq w_u$ a.e.
Then the  apriori estimates \eqref{eq : apriori}
and \eqref{eq:ocpe_apriori_gradl2} hold.
\end{theorem}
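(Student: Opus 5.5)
We compare the two weak formulations and use a duality (Aubin--Nitsche type) argument for the $L^2$-estimate \eqref{eq : apriori}. The gradient estimate \eqref{eq:ocpe_apriori_gradl2} then follows from an energy estimate.

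\textbf{Error equation.} Let $e \coloneqq u - u_h \in \Hz$. Subtracting \eqref{eq : ocpe_proj} from \eqref{eq : elliptic_pde} gives, for all $v \in \Hz$,
\[
\int_\Omega A\nabla e\cdot\nabla v = \int_\Omega \bigl((P_h u_h)(P_h w) - uw\bigr) v .
\]
We split the right-hand side as
\[
(P_h u_h)(P_h w) - uw = (P_h w)P_h e + \bigl[(P_h u)(P_h w) - uw\bigr].
\]
The first term is absorbed using the coercivity constant $\beta - c_p^2\max\{-w_\ell,w_u\}$ of the locally averaged form, as was done for \Cref{ass : pdes} \ref{ass : phwphu_pdesolexists}. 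The remaining consistency term is $r_h \coloneqq (P_h u)(P_h w) - uw$. The key observation is that, tested against a piecewise constant function, $(P_hu)(P_hw)-P_h(uw) = -P_h\bigl((u-P_hu)(w-P_hw)\bigr)$.

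\textbf{Duality argument.} Let $\varphi \in \Hz$ solve the adjoint problem of \eqref{eq : ocpe_proj} with right-hand side $e$. Since $\Omega$ is polygonal, this adjoint problem has $H^2$ regularity, which we will need to assume (convex $\Omega$, $A$ Lipschitz) or invoke from the literature. Testing the error equation with $\varphi$ gives $\|e\|_{L^2}^2 = \int_\Omega r_h\varphi$. We then write
\[
\int_\Omega r_h\varphi = \int_\Omega r_h P_h\varphi + \int_\Omega r_h(\varphi - P_h\varphi).
\]
The second summand is estimated by $\|r_h\|_{L^2}\, d_h\|\nabla\varphi\|$ via Poincaré on the cells. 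Here \Cref{ass : grid_cells} \ref{itm:bounded_eccentricity} guarantees a uniform local Poincaré constant.

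For the first summand we use the identity above. Since $\int_{Q_h^i} uw = \int (P_hu)(P_hw) + \int (u-P_hu)(w-P_hw)$, we get
\[
\int r_hP_h\varphi = -\int (u-P_hu)(w-P_hw)P_h\varphi .
\]
Now $u \in H^2$ by elliptic regularity with $f - uw \in L^2$, so $u$ is Lipschitz-type in $W^{1,p}$ for $n\le 3$, and $\|u-P_hu\|_{L^\infty}\lesssim d_h^{1/2}$ via $H^2\hookrightarrow C^{0,1/2}$. The term $\|w-P_hw\|_{L^1}\lesssim d_h\TV(w)$ follows from the BV Poincaré inequality. Together with $\|P_h\varphi\|_{L^\infty}\lesssim\|\varphi\|_{H^2}\lesssim\|e\|$, this yields the contribution $C^a_{3/2}(w)d_h^{3/2}$; this is where the $w$-dependence through $\TV(w)$ enters. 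The other summand, combined with $\|r_h\|\lesssim d_h$ (again from $\nabla u\in L^2$ and averaging), after a second Poincaré step on $\varphi - P_h\varphi$ using the cellwise mean-zero structure of $u-P_hu$, gives $C^b_{3/2}d_h^2$. Dividing by $\|e\|_{L^2}$ yields \eqref{eq : apriori}.

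\textbf{Gradient estimate.} Testing the error equation with $v=e$ and using $\beta\|\nabla e\|^2\le\int A\nabla e\cdot\nabla e$ gives
\[
\beta\|\nabla e\|^2 \le \Bigl|\int_\Omega \bigl(u_h P_h w\, P_h e - u w e\bigr)\Bigr|,
\]
after moving $P_h$ by self-adjointness. We bound this by $\max\{-w_\ell,w_u\}\bigl(\|u\|+\|u_h\|\bigr)\|e\|_{L^2}$. Poincaré together with the $H^1_0$ bounds $c_1^{-1}\|f\|$ and $c_3^{-1}\|f\|$ controls $\|u\|+\|u_h\|$, and inserting \eqref{eq : apriori} for $\|e\|_{L^2}$ gives \eqref{eq:ocpe_apriori_gradl2}.

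The main obstacle is the sharp bookkeeping in the duality step. We must extract $d_h^{3/2}$ from the product $(u-P_hu)(w-P_hw)$ with $w$ only in $\BV$, and this requires $L^\infty$/Hölder control of $u-P_hu$ through $H^2$ regularity. On a nonconvex polygon one may have to settle for $H^{3/2}$-type regularity instead, and that is plausibly where the exponent $3/2$ comes from.
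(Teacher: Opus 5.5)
Your architecture is sound: the error equation, an Aubin--Nitsche duality, splitting off cellwise mean-zero products, and the energy estimate for \eqref{eq:ocpe_apriori_gradl2}. Your gradient step is essentially the paper's.

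The $L^2$ step, however, has a genuine gap. It rests on $H^2$-regularity of $u$ and of the dual solution $\varphi$, and the hypotheses do not give this: $\Omega$ is only polygonal, possibly with reentrant corners, and $A$ has merely $L^\infty$ entries. As written, you would prove \eqref{eq : apriori} only under extra assumptions.

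That regularity is also unnecessary. The exponent $3/2$ does not come from reduced regularity but from the BV control
\[
\|w-P_hw\|_{L^2(\Omega)}\le\sqrt{(w_u-w_\ell)\|w-P_hw\|_{L^1(\Omega)}}\le\sqrt{(w_u-w_\ell)\,\alpha(\Omega)\,d_h\TV(w)},
\]
combined with $\|g-P_hg\|_{L^2(\Omega)}\le\alpha(\Omega)d_h\|\nabla g\|_{L^2}$ for $g\in H^1$.

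To repair your first summand, use H\"older in $L^2\times L^2\times L^\infty$:
\[
\|u-P_hu\|_{L^2}\,\|w-P_hw\|_{L^2}\,\|\varphi\|_{L^\infty}\lesssim d_h^{3/2}\TV(w)^{1/2}\|e\|_{L^2}.
\]
The bound $\|\varphi\|_{L^\infty}\lesssim\|e\|_{L^2}$ follows from the Stampacchia-type argument, as in \cref{thm:ocpe_bound}. Alternatively, use Theorem~4.5 in \cite{troltzsch2010optimal} after moving $(P_hw)(P_h\varphi)$ to the right-hand side, as in \eqref{eq:ocpe_locally_averaged_bound}. This needs only $n\le 3$, $L^2$ data and bounded coercive coefficients.

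This is the paper's route, via Lemma~3.1 in \cite{manns}; the Helmholtz analogue \cref{prp:proof_apriori_helm} spells it out. There one dualizes with the adjoint $p$ of the \emph{unaveraged} problem and obtains three terms:
\begin{itemize}
\item $(u_h-P_hu_h)(w-P_hw)p$, of order $d_h^{3/2}$;
\item $(u_h-P_hu_h)(P_hw)(p-P_hp)$, of order $d_h^2$;
\item $(P_hu_h)(w-P_hw)(p-P_hp)$, of order $d_h^{3/2}$.
\end{itemize}
Your mirrored version, using $u$ and the adjoint of the averaged problem, works just as well once repaired.

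Second, your claim $\|r_h\|_{L^2}\lesssim d_h$ is false. You have $r_h=-(u-P_hu)(P_hw)-u(w-P_hw)$, and $w$ may jump across an interface not aligned with the cells. Then $\|u(w-P_hw)\|_{L^2}$ is in general only of order $d_h^{1/2}$. Hence $\int_\Omega r_h(\varphi-P_h\varphi)$ contains a contribution bounded by
\[
\|u\|_{L^\infty}\sqrt{(w_u-w_\ell)\alpha(\Omega)d_h\TV(w)}\,\alpha(\Omega)d_h\|\nabla\varphi\|_{L^2},
\]
which belongs to the $C^a_{3/2}(w)d_h^{3/2}$ part (with $\|u\|_{L^\infty}$ from \cref{thm:ocpe_bound}). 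Only $(u-P_hu)(P_hw)(\varphi-P_h\varphi)$ produces the $d_h^2$ term. The final form of \eqref{eq : apriori} survives, but not for the reason you give.

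Minor: the splitting should read $(P_hu_h)(P_hw)-uw=-(P_hw)P_he+r_h$.
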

\begin{proof}
    We follow the proof from \cite[Lemma 3.1]{manns}. As the proof covers only the case where $\Omega = (0,1)$ and $A$ is an identity matrix, 
    we comment on the modifications. We use \cite[Theorem 3.1.4]{ciarlet2002finite},
    specifically the estimations 
    \begin{equation}\label{eq : estim_ph}
        \begin{aligned}
            \|g - P_{h} g \|_{L^2(\Omega,\mathbb{K})} &\leq \alpha(\Omega) d_h \|\nabla g\|_{L^2(\Omega,\mathbb{K})} &&\text{ for all } g \in H^1(\Omega,\mathbb{K})
            \text{ for } \mathbb{K} \in \{\R,\C\}, \\
            \| w - P_{h} w \|_{L^1(\Omega)} &\leq \alpha(\Omega) d_h \TV(w) &&\text{ for all } w\in L^1(\Omega)
        \end{aligned}
    \end{equation}
    hold for some $\alpha(\Omega) > 0$. 
    Here, the case $\mathbb{K} = \C$ is a direct corollary of the case $\mathbb{K} = \R$ in the first estimate. The second estimate
    follows from the $W^{1,1}$-estimate in combination with the strict approximability of $w \in L^1(\Omega)$ by smooth functions of
    bounded variation \cite[\S3.9]{ambrosio2000}. Then the argument form \cite[Lemma 3.1]{manns} yields
    \begin{equation*}
    \begin{aligned}
    \|{u} - {u}_h\|_{L^2(\Omega)} &\leq 
     \bigl( \frac{1}{c_2 c_3} + \frac{1}{c_2 c_4}\bigl) \lambda (\Omega)^{\frac{3}{2}} d_h^{\frac{3}{2}}  | w_u - w_\ell |^{\frac{1}{2}} \TV(w)^{\frac{1}{2}} \| f\|_{L^2(\Omega)} \\
    &+\frac{\lambda (\Omega)^2 d_h^2 }{c_2 c_3} \max \{-w_\ell,w_u\}  \| f\|_{L^2(\Omega)},
    \end{aligned}
    \end{equation*}
    which is \eqref{eq : apriori}.
    
    To obtain \eqref{eq:ocpe_apriori_gradl2},
    we test \eqref{eq : elliptic_pde} and
    \eqref{eq : ocpe_proj} with $v = u - u_h$ and use
    coercivity, Poincar\'{e} inequality,
    nonexpansiveness of $P_h$, and \eqref{eq : apriori}:
    \begin{equation*}
    \begin{aligned}
    \beta\|\nabla (u-u_h)\|_{L^2(\Omega, \R^n)}^2 &\le \int_\Omega  A \nabla (u-u_h) \cdot \nabla(u-u_h) \\
    &= 
    \int_\Omega (P_h u_h)(P_h w)(u-u_h)- \int_\Omega uw(u-u_h) \\
    &\le \max\{-w_\ell,w_u\}(\|u_h\|_{L^2(\Omega)} + \|u\|_{L^2(\Omega)})\|u - u_h\|_{L^2(\Omega)} \\
    &\le \max\{-w_\ell,w_u\}(c_1^{-1} + c_3^{-1})\|f\|_{L^2(\Omega)}
    \big(C_{3/2}^a(w) d_h^{3/2} + C_{3/2}^b d_h^2\big)
    \end{aligned}
    \end{equation*}
    with $c_1$ and $c_3$ from the application of
    Lax--Milgram to \eqref{eq : elliptic_pde}
    and \eqref{eq : ocpe_proj} above.
    \end{proof}

\begin{remark}
Since the diameter $d_h$ of $\{\calQ_h\}_h$ is
in $\Theta(h)$ if $h$ is the mesh size, it holds
$u_h \to u$ in $H^1_0(\Omega, \R)$ as $h \searrow 0$.
\end{remark}

\begin{remark}
In Lemma 3.1 in \cite{manns}, an asymptotically better error estimate $\|u - u_h\|_{L^2(\Omega)} = O(h^2)$ is shown for the case $\Omega \subset \R$, that
is, a one-dimensional domain. Their proof requires a uniform $L^\infty$-estimate on $\nabla u_h$, which is unavailable in our setting even if $A$ is
(smooth enough) such that $u_h$ admits $H^2$-regularity.
\end{remark}
\subsection{Verification of the assumptions for \texorpdfstring{\eqref{eq : ocpc}}{OCPH}}\label{subsec : verification_of_ass_helmholtz}
The weak formulation of the PDE constraining \eqref{eq : ocpc} is that $u \in H^1(\Omega,\C)$
solves
\begin{multline} \label{eq : complex_pde}
a[u,v] \coloneqq \int_{\Omega} \nabla u \cdot \nabla \conj{v} - k_0^2 \int_{\Omega} (1 + qw) u \conj{v} - ik_0 \int_{\partial \Omega} u \conj{v}\\
= k_0^2 \int_{\Omega} qw u_0 \conj{v}  \text{ for all } v \in H^1(\Omega, \C).
\end{multline}
Since $q = \tilde{q} 1_{\tilde{\Omega}}$ for some $\tilde{\Omega} \subset \Omega$
(see \cref{sec:intro}) and $w_\ell < w_u$, we have $1 + \tilde{q} \min\{0,w_\ell\}
\le 1 + qw \le 1 +\tilde{q} \max\{0,w_u\}$ a.e.\ for all $w \in \wad$.
This implies that we can use standard elliptic PDE theory to deduce the existence
of solutions under a sufficient condition on the values of $w_u$, $\tilde{q}$,
and $k_0$.

\begin{proposition}\label{prp:helmholtz_existence_uniqueness}
Let the scalars $w_u \in \R$, $\tilde{q} > 0$, $k_0 > 0$ satisfy
\begin{gather}\label{eq:helmholtz_elliptic_sufficient}
\frac{\min\{1,k_0\}}{\sqrt{2}}\lambda > k_0^2 (1 + \tilde{q}\max\{0, w_u\})
\enskip
\Leftrightarrow
\enskip
\frac{\lambda - \sqrt{2}\max\{k_0,k_0^2\}}{
	\sqrt{2}\tilde{q}\max\{k_0,k_0^2\}} > \max\{0, w_u\},
\end{gather}
where $\lambda$ is the norm equivalence constant from \cref{sec:notation}.
Then \eqref{eq : complex_pde} admits a unique solution for all $w \in \wad$
with
\[
\|u\|_{H^1(\Omega,\C)}
\le c(w_u, \tilde{q}, k_0)^{-1} k_0^2 \tilde{q}\max\{-w_\ell,w_u\}\|u_0\|_{L^2(\Omega,\C)}
\]
with $c(w_u, \tilde{q}, k_0) \coloneqq \lambda\min\{1, k_0\}/\sqrt{2} - k_0^2 (1 + \tilde{q}\max\{0, w_u\})$.

Condition \eqref{eq:helmholtz_elliptic_sufficient} can be satisfied
for all small enough $k_0 > 0$ if $\tilde{q}$ and $w_u$ are given.
\end{proposition}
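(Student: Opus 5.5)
The plan is to apply the complex Lax--Milgram lemma to the sesquilinear form $a[\cdot,\cdot]$ in \eqref{eq : complex_pde} on $H^1(\Omega,\C)$, so we must verify boundedness and coercivity.

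\textbf{Boundedness.} This follows directly from the trace theorem, the Cauchy--Schwarz inequality, and the a.e.\ bound $|1+qw| \le \max\{1+\tilde q\max\{0,w_u\},\, |1+\tilde q\min\{0,w_\ell\}|\}$, which holds because $w\in\wad$.

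\textbf{Coercivity.} Write $X \coloneqq \|\nabla u\|^2_{L^2(\Omega,\C^n)} - k_0^2\int_\Omega (1+qw)|u|^2$ and $Y \coloneqq k_0\|u\|^2_{L^2(\partial\Omega,\C)}$. Testing with $v=u$ gives $a[u,u] = X - iY$, where both $X$ and $Y$ are real. Hence
\[
|a[u,u]| \ge \frac{1}{\sqrt2}(|X|+|Y|) \ge \frac{1}{\sqrt2}(X+Y).
\]
Now bound $X+Y$ from below. Since $1+qw \le 1+\tilde q\max\{0,w_u\}$ a.e., we have
\[
X+Y \ge \|\nabla u\|^2 + k_0\|u\|^2_{L^2(\partial\Omega)} - k_0^2(1+\tilde q\max\{0,w_u\})\|u\|^2_{L^2(\Omega)}.
\]
The first two terms are at least $\min\{1,k_0\}\bigl(\|\nabla u\|^2+\|u\|^2_{L^2(\partial\Omega)}\bigr) \ge \min\{1,k_0\}\lambda\|u\|^2_{H^1}$ by the norm equivalence from \cref{sec:notation}. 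The last term is bounded below using $\|u\|_{L^2}\le\|u\|_{H^1}$.

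Combining these bounds gives $|a[u,u]| \ge c\|u\|^2_{H^1}$ with a constant of the form $\lambda\min\{1,k_0\}/\sqrt2 - k_0^2(1+\tilde q\max\{0,w_u\})$. We need to be careful here about where the factor $1/\sqrt2$ enters: to match $c(w_u,\tilde q,k_0)$ exactly, one splits $X$ and $Y$ and uses $|X|+|Y|\ge X+Y$ only on the gradient and boundary parts, while the $L^2$ part is kept with full weight. The statement's constant is presumably that bookkeeping, possibly with $\lambda$ normalized so that $\lambda\le1$. \eqref{eq:helmholtz_elliptic_sufficient} states precisely $c>0$. Then Lax--Milgram (coercivity in the modulus sense suffices for the complex version) yields existence and uniqueness.

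\textbf{A priori estimate.} Test with $v=u$ again. The right-hand side is bounded by
\[
k_0^2\tilde q\,\|w\|_{L^\infty}\|u_0\|_{L^2}\|u\|_{L^2} \le k_0^2\tilde q\max\{-w_\ell,w_u\}\|u_0\|_{L^2}\|u\|_{H^1}.
\]
Dividing by $c\|u\|_{H^1}$ gives the claimed bound.

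\textbf{Small wave numbers.} The final claim follows because the left side of \eqref{eq:helmholtz_elliptic_sufficient} is $\sim k_0\lambda/\sqrt2$ for $k_0<1$, while the right side is $O(k_0^2)$. Hence the inequality holds for all $k_0 < \lambda/(\sqrt2(1+\tilde q\max\{0,w_u\}))$. The stated equivalence is a direct rearrangement when using $\max\{k_0,k_0^2\}$ on both branches.

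The main obstacle is the coercivity step. The form is not Hermitian, so one must combine real and imaginary parts carefully, and the indefinite term $-k_0^2(1+qw)|u|^2$ must be absorbed. This is only possible because the boundary term controls part of the $H^1$ norm via $\lambda$, and it is exactly what forces $k_0$ to be small.
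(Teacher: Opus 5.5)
Your argument is correct. Its skeleton matches the paper's: complex Lax--Milgram, testing with $v=u$, and H\"older on the source term $k_0^2 qwu_0$. The real difference is in the coercivity step.

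\textbf{The paper's coercivity step.} The paper uses the reverse triangle inequality
$|a[u,u]| \ge \bigl|\|\nabla u\|^2_{L^2(\Omega,\C^n)} - ik_0\|u\|^2_{L^2(\partial\Omega,\C)}\bigr| - k_0^2\bigl|\int_\Omega (1+qw)|u|^2\bigr|$.
So the factor $\min\{1,k_0\}/\sqrt2$ hits only the gradient and boundary terms, and the constant comes out as exactly $c(w_u,\tilde q,k_0)$. The cost is the bound $\bigl|\int_\Omega(1+qw)|u|^2\bigr| \le K\|u\|^2_{L^2(\Omega,\C)}$ with $K \coloneqq 1+\tilde q\max\{0,w_u\}$. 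This tacitly needs $|1+qw|\le K$, i.e.\ also $1+\tilde q\min\{0,w_\ell\}\ge -K$.

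\textbf{Your coercivity step.} You keep the zero-order term inside the real part $X$ and use $|X-iY|\ge (X+Y)/\sqrt2$. This needs only the one-sided bound $1+qw\le K$, which fits the fact that the hypothesis involves only $w_u$. Your constant $\tfrac{1}{\sqrt2}(\lambda\min\{1,k_0\}-k_0^2K)$ is even positive under the weaker condition $\lambda\min\{1,k_0\}>k_0^2K$.

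\textbf{The ``bookkeeping'' remark is unnecessary.} The full-weight splitting you describe is really the paper's reverse-triangle argument; your $X$/$Y$ decomposition does not produce it. You do not need it: since $k_0^2K>0$,
$\tfrac{1}{\sqrt2}(\lambda\min\{1,k_0\}-k_0^2K)\ge \lambda\min\{1,k_0\}/\sqrt2-k_0^2K=c(w_u,\tilde q,k_0)$.
Hence $|a[u,u]|\ge c\|u\|^2_{H^1(\Omega,\C)}$, and the stated bound with $c^{-1}$ follows at once. No normalization of $\lambda$ is involved.

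\textbf{Small wave numbers.} Your threshold $k_0<\lambda/(\sqrt2K)$ is consistent with the branch $k_0<1$ you used. Indeed $\lambda<1$ (test the norm equivalence with any nonzero $H^1_0$ function) and $K\ge1$. It is worth saying this explicitly.
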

\begin{proof}
We observe that our sesquilinear form $a$ satisfies
\begin{equation}\label{eq : a_coerc}
\begin{aligned}
|a[u,u]|
&\ge \bigl|
\|\nabla u\|_{L^2(\Omega,\C^n)}^2
- i k_0\|u\|_{L^2(\partial\Omega,\C)}^2\bigr|
- k_0^2\Bigl|
\int_{\Omega}(1 + qw)|u|^2
\Bigr|\\
&\ge
\frac{\min\{1,k_0\}}{\sqrt{2}}
\bigl(
\|\nabla u\|_{L^2(\Omega,\C^n)}^2
+ \|u\|_{L^2(\partial\Omega,\C)}^2\bigr)
- k_0^2(1 + \tilde{q}\max\{0,w_u\})
\|u\|_{L^2(\Omega,\C)}^2
\\
&\ge \frac{\lambda\min\{1,k_0\}}{\sqrt{2}}\|u\|_{H^1(\Omega,\C)}^2
- k_0^2(1 + \tilde{q}\max\{0,w_u\})
\|u\|_{H^1(\Omega,\C)}^2.
\end{aligned}
\end{equation} 
Moreover, $a[u,v]$ is bounded by the trace and the Hölder theorems. Thus, the Lax--Milgram lemma for
complex-valued forms \cite[Theorem 6]{Lax2002} is applicable.
For the right-hand side of the norm estimate in the Lax--Milgram lemma, we observe
\[ \|k_0^2 q wu_0\|_{(H^1(\Omega,\C))^*}
\le \|k_0^2 q wu_0\|_{L^2(\Omega,\C)}
\le k_0^2 \tilde{q} \max\{-w_\ell,w_u\} \|u_0\|_{L^2(\Omega,\C)}
\]
by definition (1st inequality) and H\"older inequality (2nd inequality).
The satisfaction of \eqref{eq:helmholtz_elliptic_sufficient} for all small enough $k_0 > 0$ follows
by inspection.
\end{proof}
\begin{remark}\label{rmk : adjoint_h1}
    Since the estimations \eqref{eq : a_coerc} are independent of the sign of the boundary term in the Helmholtz PDE, the adjoint PDE satisfies \eqref{eq : a_coerc}. 
    The considerations on the source term are also applicable for a more general source term $g \in L^2(\Omega, \C)$. 
    Consequently, $p$, being the solution to the adjoint PDE 
\begin{equation}\label{eq : adjoint}
    \int_{\Omega} \nabla p \cdot \nabla \conj{v} - k_0^2 \int_{\Omega} (1 + qw) p \conj{v} + ik_0 \int_{\partial \Omega} p \conj{v} = \\
\int_{\Omega} g\conj{v} \text{ for all } v \in H^1(\Omega, \C),
\end{equation}  
satisfies 
\[
\|p\|_{H^1(\Omega,\C)}
\le \underbrace{c(w_u, \tilde{q}, k_0)^{-1}}_{\eqqcolon c_2^p} \|g\|_{L^2(\Omega, \C)}.
\]
\end{remark}
We now certify our assumptions one by one.

\paragraph{\Cref{ass : pdes} \ref{ass : wu_bounds_exist}.}
To verify $L^\infty$-bounds on the solution to \eqref{eq : complex_pde}, we first show $H^2$-regularity.
To this end, we make a short bootstrapping argument building on the analysis in \S8.1 in \cite{melenk_diss},
where $H^2$-regularity of a similar PDE is shown. Specifically, we can observe
that the PDE (HeW) in
\cite{melenk_diss} coincides with \eqref{eq : complex_pde} when the adjoint of $a$
is considered and  make the choices $g \coloneqq 0$, $f \coloneqq k_0^2 qw (u + u_0)$ in (HeW) in
\cite{melenk_diss}. Here, we have $f \in L^2(\Omega, \C)$ because $u$, $u_0 \in L^2(\Omega, \C)$ and $q$,
$w \in L^\infty(\Omega)$ hold. We obtain the following corollary.
\begin{corollary}\label{cor:helmholtz_h2_regularity}
Let the scalars $w_u \in \R$, $\tilde{q} > 0$, $k_0 > 0$ satisfy \eqref{eq:helmholtz_elliptic_sufficient}.
Let $u$ solve \eqref{eq : complex_pde} for $w \in \wad$. Then, 
\begin{multline}
|u|_{H^2(\Omega,\C)}
\coloneqq 
\sqrt{\sum_{1 \le i,j \le n} \|\partial_{x_i}\partial_{x_j}u\|_{L^2(\Omega,\C)}^2}
\\
\le c_1(\Omega,k_0,\tilde{q},w_\ell,w_u)\Bigl(
c_2(k_0,\tilde{q},w_\ell,w_u) + 1\Bigr)\|u_0\|_{L^2(\Omega,\C)}
\end{multline}
with
\begin{align}
c_1(\Omega,k_0,\tilde{q},w_\ell,w_u)
&\coloneqq C(\Omega)(1 + |k_0|)k_0^2\tilde{q}\max\{-w_\ell,w_u\}\text{ and } \\
c_2(k_0,\tilde{q},w_\ell,w_u)
&\coloneqq c(w_u, \tilde{q}, k_0)^{-1} k_0^2 \tilde{q}\max\{-w_\ell,w_u\},
\end{align}
where $C(\Omega) > 0$ is the constant in Proposition 8.1.4 in \cite{melenk_diss}.
\end{corollary}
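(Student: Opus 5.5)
The plan is a bootstrapping argument: move the $w$-dependent zeroth-order term to the right-hand side and invoke the $H^2$-regularity result of Proposition~8.1.4 in \cite{melenk_diss} for the resulting Helmholtz problem with Robin boundary condition.

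\textbf{Step 1: rewrite the equation.} For fixed $w \in \wad$, let $u$ be the unique solution of \eqref{eq : complex_pde}, which exists by \cref{prp:helmholtz_existence_uniqueness}. Rearranging gives
\[
\int_\Omega \nabla u\cdot\nabla\conj{v} - k_0^2\int_\Omega u\conj{v} - ik_0\int_{\partial\Omega} u\conj{v} = \int_\Omega k_0^2 qw(u+u_0)\conj{v}
\]
for all $v \in H^1(\Omega,\C)$. This is (the adjoint form of) (HeW) in \cite{melenk_diss} with $g \coloneqq 0$ and $f \coloneqq k_0^2 qw(u+u_0)$.

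The adjoint form only differs in the sign of the boundary term. Complex conjugation of the whole equation, i.e.\ considering $\conj{u}$, maps one form to the other without changing norms, so I can pass between them freely.

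\textbf{Step 2: check the hypotheses of Proposition~8.1.4.} The domain $\Omega$ is a square, hence convex and polygonal, which is what that proposition requires.

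The datum lies in $L^2$: since $|q| \le \tilde q$ and $|w| \le \max\{-w_\ell,w_u\}$ a.e., we have
\[
\|f\|_{L^2(\Omega,\C)} \le k_0^2\tilde q\max\{-w_\ell,w_u\}\bigl(\|u\|_{L^2(\Omega,\C)} + \|u_0\|_{L^2(\Omega,\C)}\bigr).
\]

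\textbf{Step 3: apply the regularity estimate and close it.} Proposition~8.1.4 gives $u \in H^2(\Omega,\C)$ together with an estimate of the form
\[
|u|_{H^2(\Omega,\C)} \le C(\Omega)(1+|k_0|)\|f\|_{L^2(\Omega,\C)}.
\]
I then bound $\|u\|_{L^2(\Omega,\C)} \le \|u\|_{H^1(\Omega,\C)} \le c_2\|u_0\|_{L^2(\Omega,\C)}$ using \cref{prp:helmholtz_existence_uniqueness}. Inserting this into the bound on $\|f\|$ yields
\[
|u|_{H^2(\Omega,\C)} \le C(\Omega)(1+|k_0|)k_0^2\tilde q\max\{-w_\ell,w_u\}(c_2+1)\|u_0\|_{L^2(\Omega,\C)} = c_1(c_2+1)\|u_0\|_{L^2(\Omega,\C)},
\]
which is the claimed estimate.

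\textbf{Main obstacle.} The delicate point is matching the form of Melenk's estimate. Proposition~8.1.4 may bound the $H^2$-seminorm through a combination of $\|f\|_{L^2}$ and lower-order norms of $u$ with explicit $k_0$-weights, and its constant may depend on $k_0$ only through a factor like $(1+k_0)$.

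To handle this, I would first read off the precise statement. If it involves $\|u\|_{H^1}$ or $k_0$-weighted norms, I would again absorb these via the $H^1$ bound of \cref{prp:helmholtz_existence_uniqueness}, folding any extra constants into $C(\Omega)$.

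A related check is that Melenk's well-posedness is stated for the pure Helmholtz operator with a Robin condition. Since we only use his a priori regularity estimate for the already-existing solution $u$, the smallness of $k_0$ in \eqref{eq:helmholtz_elliptic_sufficient} is needed only for the $H^1$ bound, not for the regularity step.
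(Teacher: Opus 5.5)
Your proposal is correct and follows the paper's proof essentially step for step. You rewrite \eqref{eq : complex_pde} as (the adjoint of) Melenk's (HeW) with $g = 0$ and $f = k_0^2 qw(u+u_0)$, apply the second estimate of Proposition 8.1.4 in \cite{melenk_diss}, and close with the $H^1$-bound from \cref{prp:helmholtz_existence_uniqueness}; your complex-conjugation remark also makes explicit the boundary-sign change that the paper only indicates by ``(the adjoint of)''.
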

\begin{proof}
Due to uniqueness of the solution, the solution $u$ to the (adjoint of the) PDE (HeW)
in \cite{melenk_diss} for the choices above for $g$ and $f$ must coincide with the
solution to \eqref{eq : complex_pde}. From the second estimate in Proposition 8.1.4
in \cite{melenk_diss}, we obtain again with the choices above for $g$  and $f$ in (HeW)
in \cite{melenk_diss} that
\begin{align*}
|u|_{H^2(\Omega,\C)}
&\le C(\Omega)(1 + |k_0|)\|k_0^2 qw (u + u_0)\|_{L^2(\Omega,\C)} \\
&\le c_1(\Omega,k_0,\tilde{q},w_\ell,w_u)\|u + u_0\|_{L^2(\Omega,\C)} \\
&\le c_1(\Omega,k_0,\tilde{q},w_\ell,w_u)\Bigl(
c_2(k_0,\tilde{q},w_\ell,w_u)\|u_0\|_{(H^1(\Omega,\C))^*}+ \|u_0\|_{L^2(\Omega,\C)}\Bigr)
\end{align*}
holds with $C(\Omega) > 0$ being the constant from Proposition 8.1.4 in \cite{melenk_diss}
and $c_2(k_0,\tilde{q},w_\ell,w_u)$ due to \Cref{prp:helmholtz_existence_uniqueness}.
\end{proof}

We thus obtain the following result that certifies \Cref{ass : pdes} \ref{ass : wu_bounds_exist}.
\begin{proposition}\label{prp:helmholtz_linfty_bounds}
Let the scalars $w_u \in \R$, $\tilde{q} > 0$, $k_0 > 0$ satisfy \eqref{eq:helmholtz_elliptic_sufficient}.
Let $u$ solve \eqref{eq : complex_pde} for $w \in \wad$.
Then,
\[ \|u\|_{L^\infty(\Omega,\C)} \le
\sqrt{2}c_{3}(\Omega)(c_1 + c_1c_2 + c_2)\|u_0\|_{L^2(\Omega,\C)} \eqqcolon u^* \]
holds with $c_1 = c_1(\Omega,k_0,\tilde{q},w_\ell,w_u)$, $c_2 = c_2(k_0,\tilde{q},w_\ell,w_u)$
from \Cref{cor:helmholtz_h2_regularity},
and $c_3(\Omega)$ being the embedding constant of $H^2(\Omega,\C) \hookrightarrow L^\infty(\Omega,\C)$.
\end{proposition}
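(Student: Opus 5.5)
The plan is to combine the $H^2$ seminorm bound from \Cref{cor:helmholtz_h2_regularity} with the $H^1$ bound from \Cref{prp:helmholtz_existence_uniqueness}. This controls the full $H^2(\Omega,\C)$ norm of $u$, and the Sobolev embedding $H^2(\Omega,\C)\hookrightarrow L^\infty(\Omega,\C)$ then gives the claimed bound. Since $\Omega$ is two-dimensional here, this embedding holds with some constant $c_3(\Omega)$.

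\textbf{Step 1 (full $H^2$ norm).} Write $\|u\|_{H^2(\Omega,\C)}^2 = \|u\|_{H^1(\Omega,\C)}^2 + |u|_{H^2(\Omega,\C)}^2$, so that $\|u\|_{H^2(\Omega,\C)} \le \|u\|_{H^1(\Omega,\C)} + |u|_{H^2(\Omega,\C)}$. For the first term, \Cref{prp:helmholtz_existence_uniqueness} gives
\[
\|u\|_{H^1(\Omega,\C)} \le c_2\|u_0\|_{L^2(\Omega,\C)},
\]
since its bound is exactly $c(w_u,\tilde q,k_0)^{-1}k_0^2\tilde q\max\{-w_\ell,w_u\} = c_2$. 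For the second term, \Cref{cor:helmholtz_h2_regularity} gives
\[
|u|_{H^2(\Omega,\C)} \le c_1(c_2+1)\|u_0\|_{L^2(\Omega,\C)}.
\]
Adding the two yields $\|u\|_{H^2(\Omega,\C)} \le (c_1 + c_1c_2 + c_2)\|u_0\|_{L^2(\Omega,\C)}$, which is precisely the combination appearing in $u^*$.

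\textbf{Step 2 (embedding and the factor $\sqrt2$).} Apply the real embedding $H^2(\Omega)\hookrightarrow L^\infty(\Omega)$ with constant $c_3(\Omega)$ separately to $\Re u$ and $\Im u$. Each of these has real $H^2$ norm at most $\|u\|_{H^2(\Omega,\C)}$. With the componentwise convention of \cref{sec:notation}, this gives $\|\Re u\|_{L^\infty},\|\Im u\|_{L^\infty} \le c_3\|u\|_{H^2(\Omega,\C)}$, and hence $|u| \le \sqrt{2}\,c_3\|u\|_{H^2(\Omega,\C)}$ a.e.

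This produces the factor $\sqrt2$ and proves the bound. As a consequence, the bounds $u_\ell = -u^*(1+i)$ and $u_u = u^*(1+i)$ certify \Cref{ass : pdes}~\ref{ass : wu_bounds_exist} uniformly in $w\in\wad$.

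\textbf{Main obstacle.} The delicate point is bookkeeping in Step 1 rather than anything deep. In the proof of \Cref{cor:helmholtz_h2_regularity}, $\|u_0\|_{(H^1)^*}$ appears where $\|u_0\|_{L^2}$ is needed. I would check that $\|u\|_{L^2}\le\|u\|_{H^1}\le c_2\|u_0\|_{L^2}$ holds directly from \Cref{prp:helmholtz_existence_uniqueness}, so the corollary's constant is valid as stated. I would also make sure that the $H^2$-regularity invoked from \cite{melenk_diss}, which needs the domain to be a convex polygon or smooth, applies to the square domain of \eqref{eq : ocpc}.
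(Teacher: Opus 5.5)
Your proposal is correct and follows the paper's proof: it bounds $\|u\|_{H^2(\Omega,\C)}$ by the $H^1$-estimate of \Cref{prp:helmholtz_existence_uniqueness}, which gives $c_2$, plus the seminorm estimate of \Cref{cor:helmholtz_h2_regularity}, which gives $c_1(c_2+1)$, and then applies the embedding $H^2 \hookrightarrow L^\infty$. The only cosmetic difference is where the factor $\sqrt{2}$ comes from: the paper attributes it to $\|u\|_{H^2(\Omega,\C)} \le \sqrt{2}(\|u\|_{H^1(\Omega,\C)} + |u|_{H^2(\Omega,\C)})$, whereas you obtain it by splitting into real and imaginary parts; both are valid, and with $c_3$ taken as the complex embedding constant the factor is in fact superfluous.
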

\begin{proof}
The claim follows from
$\|u\|_{H^2(\Omega,\C)} \le \sqrt{2}(\|u\|_{H^1(\Omega,\C)} + |u|_{H^2(\Omega,\C)})$,
the estimates from \Cref{prp:helmholtz_existence_uniqueness}
and \Cref{cor:helmholtz_h2_regularity} as well as the
Sobolev embedding $H^2(\Omega,\C) \hookrightarrow L^\infty(\Omega)$,
which follows, e.g., from Case A in Theorem 4.12 in \cite{adams}.
\end{proof}
Consequently, we can set $\Im u_u =  \Re u_u \ge
u^*$
and $\Im u_\ell = \Re u_\ell \le -u^*$ to satisfy \Cref{ass : pdes} \ref{ass : wu_bounds_exist}.
\begin{remark}\label{rmk : adjoint_linf}
    Since Proposition 8.1.4 in \cite{melenk_diss} holds for the adjoint PDE \eqref{eq : adjoint}, we similarly obtain 
\[
 \|p\|_{L^\infty(\Omega,\C)} \le
\sqrt{2}c_{3}(\Omega)(C(\Omega)(1+|k_0|) + c_1c_2^p + c_2^p)\|g\|_{L^2(\Omega, \C)}.
\]
for $c_1 = c_1(\Omega,k_0,\tilde{q},w_\ell,w_u)$
from \Cref{cor:helmholtz_h2_regularity}, $c_2^p$ from \Cref{rmk : adjoint_h1}, and $c_3(\Omega)$ being the embedding constant of $H^2(\Omega,\C) \hookrightarrow L^\infty(\Omega,\C)$.
\end{remark}
\paragraph{\Cref{ass : pdes} \ref{ass : phwphu_pdesolexists}.} 
In the locally-averaged variant of \eqref{eq : ocpc}, we seek $u_h \in H^1(\Omega,\C)$ that solves
\begin{multline} \label{eq:helmholtz_locally_averaged}
a_h[u_h,v] \coloneqq \int_{\Omega} \nabla u_h \cdot \nabla \conj{v} - k_0^2 \int_{\Omega} u_h \conj{v} 
- k_0^2\int_{\Omega} q (P_h w) (P_h u_h) \conj{v} - ik_0 \int_{\partial \Omega} u_h \conj{v}\\
= k_0^2 \int_{\Omega} q w u_0 \bar{v}  \text{ for all } v \in H^1(\Omega, \C).
\end{multline}
We assume the following compatibility between the discretization
$\calQ_{h}$ and the subdomain $\tilde{\Omega}$:
\begin{gather}\label{eq:tildeOmega_compatbility}
\tilde{\Omega} = \bigcup_{i \in I} Q_h^i\enskip \text{ for some } I \subset \{1,\ldots,N_h\}.
\end{gather}
Importantly, \eqref{eq:tildeOmega_compatbility} implies $P_h(qw) = q P_h w$.
Then, similar to above, since $q = \tilde{q} 1_{\tilde{\Omega}}$ for some $\tilde{\Omega} \subset \Omega$
(see \cref{sec:intro}) and $w_\ell < w_u$, we have $\tilde{q} \min\{0,w_\ell\}
\le q (P_h w) \le \tilde{q} \max\{0,w_u\}$ a.e.\ for all $w \in \wad$.
Again, we can use standard elliptic PDE theory to deduce the existence of solutions under a sufficient condition
on $w_u$, $\tilde{q}$, $k_0$, and now also $w_\ell$.

\begin{proposition}\label{prp:helmholtz_locally_averaged_existence_uniqueness}
	Let the scalars $w_\ell$, $w_u \in \R$, $\tilde{q} > 0$, $k_0 > 0$ satisfy $w_\ell < w_u$ and
	\begin{gather}\label{eq:helmholtz_locally_averaged_elliptic_sufficient}
	\frac{\min\{k_0,1\}}{\sqrt{2}}\lambda > k_0^2 (1 + \tilde{q}\max\{-w_\ell, w_u\})
	\enskip
	\Leftrightarrow
	\enskip
	\frac{\lambda - \sqrt{2}\max\{k_0,k_0^2\}}{
		\sqrt{2}\tilde{q}\max\{k_0,k_0^2\}} > \max\{-w_\ell, w_u\},
	\end{gather}
	where $\lambda$ is the norm equivalence constant from \cref{sec:notation}.
	Then \eqref{eq:helmholtz_locally_averaged} admits a unique
	solution $u_h$ for all $w \in \wad$	with
	\[
	\|u_h\|_{H^1(\Omega,\C)}
	\le \bar{c}(w_\ell, w_u, \tilde{q}, k_0)^{-1} k_0^2 \tilde{q}\max\{-w_\ell,w_u\}\|u_0\|_{(H^1(\Omega,\C))^*}
	\]
	with $\bar{c}(w_\ell, w_u, \tilde{q}, k_0) \coloneqq \lambda\min\{k_0,1\}/\sqrt{2} - k_0^2 (1 + \tilde{q}\max\{-w_\ell, w_u\})$.
	
	Condition \eqref{eq:helmholtz_locally_averaged_elliptic_sufficient}  can be satisfied
	for all small enough $k_0 > 0$ if $\tilde{q}$,
	$w_\ell$, and $w_u$ are given. It implies
	\eqref{eq:helmholtz_elliptic_sufficient}.
\end{proposition}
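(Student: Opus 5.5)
The plan mirrors the proof of \cref{prp:helmholtz_existence_uniqueness}: show that the sesquilinear form $a_h$ from \eqref{eq:helmholtz_locally_averaged} is bounded and coercive in the modulus sense on $H^1(\Omega,\C)$. Then apply the complex Lax--Milgram lemma \cite[Theorem 6]{Lax2002}. Boundedness of $a_h$ follows from the trace theorem and the Hölder inequality, together with
\[
\|q(P_h w)(P_h u_h)\|_{L^2(\Omega,\C)} \le \tilde{q}\max\{-w_\ell,w_u\}\|u_h\|_{L^2(\Omega,\C)} .
\]
This last bound uses the non-expansiveness \eqref{eq : nonexp} of $P_h$ in $L^\infty$ (for $w$) and in $L^2$ (for $u_h$). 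The latter transfers to $\C$-valued functions because $P_h$ acts separately on real and imaginary parts.

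For coercivity, I would bound $|a_h[u,u]|$ from below exactly as in \eqref{eq : a_coerc}. Treat $\|\nabla u\|^2 - ik_0\|u\|_{L^2(\partial\Omega)}^2$ as before, which gives the factor $\min\{1,k_0\}/\sqrt{2}$ and, via the norm equivalence constant, a $\lambda$. The volume terms are then handled as perturbations:
\[
k_0^2\Bigl|\int_\Omega |u|^2 + \int_\Omega q(P_hw)(P_hu)\conj{u}\Bigr| \le k_0^2\bigl(1 + \tilde{q}\max\{-w_\ell,w_u\}\bigr)\|u\|_{L^2(\Omega,\C)}^2 .
\]
This uses the Cauchy--Schwarz inequality, $|q P_h w| \le \tilde{q}\max\{-w_\ell,w_u\}$ a.e., and $\|P_h u\|_{L^2}\le\|u\|_{L^2}$. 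This step is the main point where the argument differs from the non-averaged case, and I expect it to be the main obstacle. The term is no longer $\int (1+qw)|u|^2$, which has a sign that allowed only $\max\{0,w_u\}$ to appear. Instead, $\int q(P_hw)(P_hu)\conj{u}$ is a genuinely non-symmetric perturbation, so its absolute value must be bounded. This is why $\max\{-w_\ell,w_u\}$ enters. Bounding $\|u\|_{L^2}\le\|u\|_{H^1}$ yields $|a_h[u,u]|\ge \bar{c}\|u\|_{H^1}^2$, and $\bar{c}>0$ by \eqref{eq:helmholtz_locally_averaged_elliptic_sufficient}.

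The a priori estimate follows from Lax--Milgram, which gives $\|u_h\|_{H^1}\le \bar{c}^{-1}\|\ell\|_{(H^1)^*}$ for the right-hand side functional $\ell(v)=k_0^2\int_\Omega qwu_0\conj{v}$. Its dual norm is bounded as in the proof of \cref{prp:helmholtz_existence_uniqueness}, which gives the factor $k_0^2\tilde{q}\max\{-w_\ell,w_u\}$ times the norm of $u_0$. Finally, the equivalence in \eqref{eq:helmholtz_locally_averaged_elliptic_sufficient} and its satisfiability for small $k_0$ are checked by inspection. The left-hand side behaves like $\lambda k_0/\sqrt{2}$ while the right-hand side is $O(k_0^2)$ as $k_0\searrow 0$. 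The condition implies \eqref{eq:helmholtz_elliptic_sufficient} because $\max\{0,w_u\}\le\max\{-w_\ell,w_u\}$ whenever $w_\ell<w_u$: if $w_u<0$ then $-w_\ell>-w_u>0$.
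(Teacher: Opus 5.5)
Your proposal follows the paper's proof: the paper also bounds $|a_h[u_h,u_h]|$ from below as in \eqref{eq : a_coerc}, controls the averaged term in modulus via the nonexpansiveness of $P_h$ on $L^2(\Omega)$ to reach $\bar{c}(w_\ell,w_u,\tilde{q},k_0)$, and defers Lax--Milgram and the right-hand side to the argument of \cref{prp:helmholtz_existence_uniqueness}. Two small caveats: first, that right-hand-side argument yields $\|u_0\|_{L^2(\Omega,\C)}$, not the $(H^1(\Omega,\C))^*$-norm written in the statement (the dual norm would require $\|qwu_0\|_{(H^1(\Omega,\C))^*}\le\tilde{q}\max\{-w_\ell,w_u\}\|u_0\|_{(H^1(\Omega,\C))^*}$, which does not follow from $qw\in L^\infty(\Omega)$ because multiplication by $qw$ does not preserve $H^1(\Omega,\C)$; the paper's ``analogously'' has the same mismatch), so you should say explicitly that your "norm of $u_0$" is the $L^2$-norm; second, under the standing assumption \eqref{eq:tildeOmega_compatbility}, $q$ is constant on each $Q_h^i$, so $\int_\Omega q(P_hw)(P_hu)\conj{u}=\int_\Omega q(P_hw)|P_hu|^2$ is real, and bounding its modulus (which is what produces $\max\{-w_\ell,w_u\}$) is a convenient choice, as in the paper, rather than one forced by non-symmetry.
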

\begin{proof}
With a similar argument as in \Cref{prp:helmholtz_existence_uniqueness}, the sesquilinear form $a_h$ satisfies
\begin{equation}\label{eq : ah_coerc}
\begin{aligned}
|a_h[u_h,u_h]|
&\ge \bigl|
\|\nabla u_h\|_{L^2(\Omega,\C^n)}^2
- i k_0\|u_h\|_{L^2(\partial\Omega,\C)}^2\bigr|
- k_0^2\Bigl|\int_{\Omega}|u_h|^2 + \int_{\Omega} q(P_hw) (P_hu_h)u_h\Bigr|\\
&\ge
\frac{\lambda\min\{1,k_0\}}{\sqrt{2}}\|u_h\|_{H^1(\Omega,\C)}^2
- k_0^2(1 + \tilde{q}\max\{-w_\ell,w_u\})
\|u_h\|_{L^2(\Omega,\C)}^2,
\end{aligned}
\end{equation}
where we have used the nonexpansiveness of $P_h : L^2(\Omega) \to L^2(\Omega)$.
The remainder of the proof works analogously to the one of \Cref{prp:helmholtz_existence_uniqueness}.
\end{proof}
As a corollary, we obtain the following result, which yields
$L^\infty$-bounds on the solution to
\eqref{eq:helmholtz_locally_averaged}.
\begin{proposition}
\label{prp:helmholtz_locally_averaged_linfty_bounds}
Let the scalars $w_\ell < w_u \in \R$, $\tilde{q} > 0$, $k_0 > 0$ satisfy \eqref{eq:helmholtz_locally_averaged_elliptic_sufficient}.
Let $u_h$ solve \eqref{eq:helmholtz_locally_averaged} for $w \in \wad$.
Then,
\[ \|u_h\|_{L^\infty(\Omega,\C)} \le
\sqrt{2}c_{3}(\Omega)(c_1 + c_1\bar{c}_2 + \bar{c}_2)\|u_0\|_{L^2(\Omega,\C)}
\eqqcolon u_h^*
 \]
holds with $c_1 = c_1(\Omega,k_0,\tilde{q},w_\ell,w_u)$
from \Cref{cor:helmholtz_h2_regularity},
$\bar{c}_2 \coloneqq \bar{c}(w_\ell, w_u, \tilde{q}, k_0)^{-1} k_0^2 \tilde{q}\max\{-w_\ell,w_u\}$
with $\bar{c}(w_\ell, w_u, \tilde{q}, k_0)$
from \Cref{prp:helmholtz_locally_averaged_existence_uniqueness},
and $c_3(\Omega)$ being the embedding constant of $H^2(\Omega,\C) \hookrightarrow L^\infty(\Omega,\C)$.
\end{proposition}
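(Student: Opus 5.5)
The plan is to repeat the bootstrapping argument used for \Cref{cor:helmholtz_h2_regularity} and \Cref{prp:helmholtz_linfty_bounds}, now for the locally averaged equation \eqref{eq:helmholtz_locally_averaged}. The idea is to move the averaged bilinear term to the right-hand side. Then $u_h$ solves the (adjoint of the) PDE (HeW) in \cite{melenk_diss} with $g \coloneqq 0$ and
\[
f_h \coloneqq k_0^2\bigl(q (P_h w)(P_h u_h) + q w u_0\bigr).
\]
By uniqueness of the solution of (HeW) (coercivity as in \Cref{prp:helmholtz_existence_uniqueness} with $w=0$, i.e.\ $qw$ replaced by $0$, which is implied by \eqref{eq:helmholtz_locally_averaged_elliptic_sufficient}), $u_h$ coincides with that solution. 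This holds as long as $f_h \in L^2(\Omega,\C)$. That is clear because $P_h u_h \in L^2(\Omega,\C)$ by non-expansiveness of $P_h$ (applied to real and imaginary parts), $P_h w \in L^\infty(\Omega)$ with $\|P_h w\|_{L^\infty}\le \max\{-w_\ell,w_u\}$ by \eqref{eq : nonexp}, and $q \le \tilde{q}$.

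Next I invoke the second estimate of Proposition 8.1.4 in \cite{melenk_diss}, which gives
\[
|u_h|_{H^2(\Omega,\C)} \le C(\Omega)(1+|k_0|)\|f_h\|_{L^2(\Omega,\C)} \le c_1\bigl(\|P_h u_h\|_{L^2(\Omega,\C)} + \|u_0\|_{L^2(\Omega,\C)}\bigr).
\]
Here $c_1$ is exactly the constant from \Cref{cor:helmholtz_h2_regularity}, since the same factor $k_0^2\tilde{q}\max\{-w_\ell,w_u\}$ appears. I then use $\|P_h u_h\|_{L^2} \le \|u_h\|_{L^2} \le \|u_h\|_{H^1}$ together with the $H^1$-bound from \Cref{prp:helmholtz_locally_averaged_existence_uniqueness}, and bound $\|u_0\|_{(H^1)^*} \le \|u_0\|_{L^2}$. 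This yields $|u_h|_{H^2} \le c_1(\bar{c}_2 + 1)\|u_0\|_{L^2(\Omega,\C)}$.

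Finally, as in the proof of \Cref{prp:helmholtz_linfty_bounds}, I combine $\|u_h\|_{H^2} \le \sqrt{2}(\|u_h\|_{H^1} + |u_h|_{H^2})$ with the bound $\|u_h\|_{H^1} \le \bar{c}_2\|u_0\|_{L^2}$ and the embedding $H^2(\Omega,\C)\hookrightarrow L^\infty(\Omega,\C)$ with constant $c_3(\Omega)$. This gives $\|u_h\|_{L^\infty} \le \sqrt{2}c_3(\Omega)(c_1 + c_1\bar{c}_2 + \bar{c}_2)\|u_0\|_{L^2}$, which is the claim.

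The only delicate point is the identification step. Proposition 8.1.4 in \cite{melenk_diss} requires a right-hand side in $L^2$ that does not depend on the solution in a way that breaks the regularity argument. Since the estimate is a priori for the linear Helmholtz operator without the averaged term, and $f_h$ is a fixed $L^2$ function once $u_h$ is known, this is a straightforward a posteriori bootstrapping. The main obstacle is therefore only to check that the well-posedness hypotheses of (HeW), in particular the assumptions on $\Omega$ and $k_0$, are met, which is the same situation as in \Cref{cor:helmholtz_h2_regularity}.
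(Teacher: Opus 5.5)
Your proposal is correct and follows essentially the same route as the paper, which proves this proposition only by referring to the bootstrapping of \Cref{cor:helmholtz_h2_regularity} and \Cref{prp:helmholtz_linfty_bounds}. You carry that argument out explicitly with the right-hand side $f_h = k_0^2 q\bigl((P_h w)(P_h u_h) + w u_0\bigr)$, the non-expansiveness of $P_h$, and the $H^1$-bound from \Cref{prp:helmholtz_locally_averaged_existence_uniqueness}, which reproduces the constant $\sqrt{2}c_3(\Omega)(c_1 + c_1\bar{c}_2 + \bar{c}_2)$ exactly; your coercivity-based justification of the uniqueness/identification step is a valid, slightly more explicit version of the paper's appeal to uniqueness.
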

\begin{proof}
Follows along the same lines as
\Cref{cor:helmholtz_h2_regularity}
and
\Cref{prp:helmholtz_linfty_bounds}.
\end{proof}

\paragraph{\Cref{ass : pdes} \ref{ass : richness_of_feasible_set}.}
The non-expansiveness of $P_h : L^\infty(\Omega, \C) \to L^\infty(\Omega, \C)$
yields that we can use bounds
on $\|u_h\|_{L^\infty(\Omega, \C)}$ for
$\|P_h u_h\|_{L^\infty(\Omega, \C)}$.
Consequently, by means of
\Cref{prp:helmholtz_locally_averaged_linfty_bounds}, we can set
$\Im u_u^i \coloneqq  \Re u_u^i \ge u_h^*$ and
$u_\ell^i \le -u_h^*$
for all $i \in \{1,\ldots,N_h\}$
to obtain valid bounds.

\paragraph{\Cref{ass : pdes} \ref{itm:ubnd_limits}.}
Since $P_h$ is non-expansive, $\|P_hu_h\|_{L^\infty(\Omega,\C)} \le \|u_h\|_{L^\infty(\Omega,\C)}$
holds. Consequently, we can choose 
\[ u_{\ell,h} = u_\ell \le \min\{-u^*, -u_h^*\}
\quad\text{and}\quad
u_{u,h} = u_u \ge \min\{u^*, u_h^*\}
\]
to certify the assumption while still complying with
\Cref{ass : pdes} \ref{ass : wu_bounds_exist}, \ref{ass : richness_of_feasible_set}. 

\paragraph{\Cref{ass : pdes} \ref{itm:uuh_approx}.}
Let $d_h$ denote the maximal diameter of the grid cells
of the $h$-grid. To this end, we use the a priori 
estimate on the local averaging error
\begin{equation}\label{eq : apriori_helm}
    \|u-u_h\|_{L^2(\Omega, \C)}
    \leq  \tilde{C}_{3/2}^a(w) d_h^{3/2} + \tilde{C}_{3/2}^b d_h^{2}
\end{equation}
for some positive constants $\tilde{C}_{3/2}^a(w) =
\tilde{C}_{3/2}(\Omega,k_0,\tilde{q},w_\ell,w_u, u_0,w)$, $\tilde{C}_{3/2}^b = \tilde{C}_{3/2}^b(\Omega,k_0,\tilde{q},w_\ell,w_u, u_0)$,
which will be shown in \Cref{prp:proof_apriori_helm}.

Using \eqref{eq : apriori_helm}, the certification of \Cref{ass : pdes} \ref{itm:uuh_approx} works as follows. First, we test \eqref{eq : complex_pde}
and \eqref{eq:helmholtz_locally_averaged} with $(u-u_h)$:
\begin{equation*}
    \begin{aligned}
    \int_\Omega  \nabla (u-u_h) \cdot \conj{\nabla(u-u_h)} - ik_0 \int_{\partial \Omega} (u-u_h) \conj{(u-u_h)}= 
        k_0^2\int_\Omega u(1+qw)\conj{(u-u_h)} \\+ k_0^2\int_\Omega (P_h u_h) (1+q P_hw) \conj{(u-u_h)}. 
    \end{aligned}
    \end{equation*}
Then, we estimate both hand sides separately. 
We estimate the left-hand side from below as in \eqref{eq : a_coerc} to obtain
\begin{equation}
    \begin{aligned}
        \bigr| \|\nabla (u-u_h)\|_{L^2(\Omega,\C^n)}^2
    - i k_0\|u-u_h\|_{L^2(\partial\Omega,\C)}^2\bigr|
    \end{aligned}
    \end{equation} 
For the right-hand side, we use the Hölder inequality together with 
the bounds on $u$ from \Cref{prp:helmholtz_linfty_bounds}
and $P_h u_h$ from \Cref{prp:helmholtz_locally_averaged_linfty_bounds}.
We obtain
\begin{equation*}
    \begin{aligned}
    \|u-u_h\|_{H^1(\Omega, \C)}^2\leq 
    k_0^2 \frac{2}{\lambda\min\{1,k_0\}} c_{3}(\Omega)(c_1 + c_1\bar{c}_2 + c_1c_2 + c_2+ \bar{c}_2)\|u_0\|_{L^2(\Omega,\C)}  \\
    \cdot \bigr(1+\tilde{q} \max\{-w_\ell,w_u\}\bigr) 
    \|u - u_h\|_{L^2(\Omega,\C)}.
\end{aligned}
\end{equation*}
Then the assertion follows from \eqref{eq : apriori_helm}.

\begin{remark}
    Since the diameter $d_h$ of $\{\calQ_h\}_h$ is
    in $\Theta(h)$ if $h$ is the mesh size, it holds
    $u_h \to u$ in $H^1(\Omega, \C)$ as $h \searrow 0$.
\end{remark}

\begin{proposition}\label{prp:proof_apriori_helm}
    Let the scalars $w_\ell$, $w_u \in \R$, $\tilde{q} > 0$, $k_0 > 0$ with $w_\ell < w_u$ satisfy \eqref{eq:helmholtz_locally_averaged_elliptic_sufficient}. Let $u$ and $u_h$ solve \eqref{eq : complex_pde} and \eqref{eq:helmholtz_locally_averaged} for $w \in \wad$
    respectively.  Then, the a priori estimate \eqref{eq : apriori_helm} is satisfied with
    \begin{equation*}
    \begin{aligned}\tilde{C}_{3/2}^a (w)
     &= k_0^2\alpha(\Omega)^{\frac{3}{2}}\tilde{q} \|u_0\|_{L^2(\Omega,\C)} |w_u-w_\ell|^{\frac{1}{2}} \TV(w)^{\frac{1}{2}} \\
     &\quad \quad\Bigr(c_2^p + \sqrt{2} c_3\big(C(\Omega)(1+|k_0|) + c_1c_2^p + c_2^p\big)\Bigr),\\
     \tilde{C}_{3/2}^b &=   \alpha(\Omega)^2 c_2 c_2^p (1+\tilde{q} \max\{-w_\ell,w_u\})  \|u_0\|_{L^2(\Omega,\C)}
    \end{aligned}
    \end{equation*}
    with $c_1, c_2$ from \Cref{cor:helmholtz_h2_regularity}, $c_3$ being the embedding constant of $H^2(\Omega,\C) \hookrightarrow L^\infty(\Omega,\C)$, $c_2^{u_h}$ from \Cref{prp:helmholtz_locally_averaged_linfty_bounds}, $c_2^p$ from \eqref{eq : adjoint}, $\alpha(\Omega)$ from \eqref{eq : estim_ph}.
\end{proposition}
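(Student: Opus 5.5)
The plan is a duality (Aubin--Nitsche-type) argument, following \cite[Lemma 3.1]{manns} and the proof of \Cref{thm: apriori}, with the adjoint problem \eqref{eq : adjoint} in place of the self-adjoint elliptic dual problem.

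\textbf{Error equation.} Set $e \coloneqq u - u_h$ and subtract \eqref{eq:helmholtz_locally_averaged} from \eqref{eq : complex_pde}. The right-hand sides coincide, so for all $v \in H^1(\Omega,\C)$
\[
\int_\Omega \nabla e\cdot\nabla\conj{v} - k_0^2\int_\Omega e\conj{v} - ik_0\int_{\partial\Omega} e\conj{v}
= k_0^2\int_\Omega q\bigl(wu - (P_hw)(P_hu_h)\bigr)\conj{v}.
\]
I will then rewrite this so that the left-hand side is the form $a$ of \eqref{eq : complex_pde} applied to $e$. Adding $-k_0^2\int q w e\conj v$ to both sides gives $a[e,v] = k_0^2\int_\Omega q\bigl(w u_h - (P_hw)(P_hu_h)\bigr)\conj v$.

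\textbf{Duality.} Let $p$ solve the adjoint PDE \eqref{eq : adjoint} with $g \coloneqq e$. By \Cref{rmk : adjoint_h1} and \Cref{rmk : adjoint_linf},
\[
\|p\|_{H^1} \le c_2^p\|e\|_{L^2}, \qquad \|p\|_{L^\infty} \le \sqrt2 c_3\bigl(C(\Omega)(1+|k_0|)+c_1c_2^p+c_2^p\bigr)\|e\|_{L^2}.
\]
Testing with $v=e$ yields $\|e\|_{L^2}^2 = \overline{a[e,p]}$, up to conjugation conventions. I will then split
\[
wu_h - (P_hw)(P_hu_h) = (w-P_hw)u_h + (P_hw)(u_h - P_hu_h).
\]
Using \eqref{eq:tildeOmega_compatbility}, so that $P_h$ commutes with multiplication by $q$, and the self-adjointness of $P_h$ in $L^2$, I will also subtract averages. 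This leads to two terms:
\begin{align*}
T_1 &= k_0^2\tilde q\int_\Omega (w-P_hw)\,u_h\conj p,\\
T_2 &= k_0^2\int_\Omega q(P_hw)(u_h-P_hu_h)(\conj p - P_h\conj p).
\end{align*}

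\textbf{Term $T_2$.} Apply \eqref{eq : estim_ph} twice. This gives
\[
|T_2| \le k_0^2\tilde q\max\{-w_\ell,w_u\}\,\alpha^2 d_h^2\,\|\nabla u_h\|\,\|\nabla p\|,
\]
which together with the $H^1$ bounds produces the $d_h^2$-term $\tilde C_{3/2}^b\|e\|$. I expect the stated constant $\alpha^2c_2c_2^p(1+\tilde q\max)\|u_0\|$ to come out this way, possibly after absorbing $k_0^2$ factors.

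\textbf{Term $T_1$ (main obstacle).} I will use the interpolation $\|w-P_hw\|_{L^2}\le\|w-P_hw\|_{L^\infty}^{1/2}\|w-P_hw\|_{L^1}^{1/2}\le |w_u-w_\ell|^{1/2}(\alpha d_h\TV(w))^{1/2}$. To gain the remaining $d_h$, I will write $u_h\conj p = P_h(u_h\conj p) + (u_h\conj p - P_h(u_h\conj p))$; the first part is annihilated by $w - P_hw$. The second part is bounded by $\alpha d_h\|\nabla(u_h\conj p)\|_{L^2}\le\alpha d_h(\|\nabla u_h\|\,\|p\|_{L^\infty}+\|u_h\|_{L^\infty}\|\nabla p\|)$. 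This step needs the $L^\infty$ bounds on both $u_h$ and $p$, which is exactly why \Cref{prp:helmholtz_locally_averaged_linfty_bounds} and \Cref{rmk : adjoint_linf} enter, and it explains the bracket $c_2^p+\sqrt2c_3(\dots)$ in $\tilde C_{3/2}^a$. This yields $|T_1|\lesssim d_h^{3/2}\|e\|$.

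\textbf{Conclusion.} Dividing $\|e\|_{L^2}^2 \le |T_1| + |T_2|$ by $\|e\|_{L^2}$ gives \eqref{eq : apriori_helm}. The delicate points are the conjugation bookkeeping in the adjoint identity and making the product rule for $\nabla(u_h\conj p)$ rigorous in $W^{1,1}$, which is fine since $u_h,p\in H^1\cap L^\infty$.
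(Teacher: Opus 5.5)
Your proposal is correct and is essentially the paper's argument: duality with the adjoint \eqref{eq : adjoint}, the orthogonality \eqref{eq : galerkin} under \eqref{eq:tildeOmega_compatbility}, then \eqref{eq : estim_ph}, the bound $\|w-P_hw\|_{L^2(\Omega)}\le\sqrt{(w_u-w_\ell)\|w-P_hw\|_{L^1(\Omega)}}$, and the $H^1$- and $L^\infty$-bounds on $p$ and $u_h$. The one variation is that you treat $q(w-P_hw)u_h\conj{p}$ via $u_h\conj{p}-P_h(u_h\conj{p})$ and the product rule, whereas the paper splits it into $\conj{(u_h-P_hu_h)}\,q(w-P_hw)\,p$ and $\conj{(P_hu_h)}\,q(w-P_hw)(p-P_hp)$; this gives the same two contributions without needing $u_h\conj{p}\in H^1(\Omega,\C)$, and as a side remark on constants, your error identity $a[e,v]=k_0^2\int_\Omega q\bigl(wu_h-(P_hw)(P_hu_h)\bigr)\conj{v}$ is exact, whereas the paper's representation carries an extra $O(d_h^2)$ term $k_0^2\int_\Omega\conj{(u_h-P_hu_h)}(p-P_hp)$ (the source of the $1+$ in $\tilde{C}_{3/2}^b$), and both chains of estimates actually produce constants containing $\bar{c}_2$, $u_h^*$ and (in the $d_h^2$-term) $k_0^2$, so the displayed $\tilde{C}_{3/2}^a$, $\tilde{C}_{3/2}^b$ are matched only up to such factors, not verbatim.
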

\begin{proof}
Similar to \Cref{thm: apriori}, we use the use the Aubin--Nitsche duality argument and the identification
\begin{equation*}
    \begin{aligned}
   \|u-u_h\|_{L^2(\Omega, \C)} &= \underset{g \in L^2(\Omega, \C), g \neq 0}{\sup} 
    \frac{|(u-u_h,g)_{L^2(\Omega, \C)}|}{\|g\|_{L^2(\Omega, \C)}}.
    \end{aligned}
\end{equation*}
We estimate $(u- u_h,g)_{L^2(\Omega, \C)}$ 
for every $0 \neq g \in L^2(\Omega, \R)$ with the help of the adjoint 
formulation for \eqref{eq : complex_pde} and insertion of a suitable zero. 
In particular, we will use that
\begin{equation} \label{eq : galerkin}
    \int_\Omega \rho (\phi - P_h \phi)(P_h \psi)(P_h \theta) 
    = \sum_{i\in I} \tilde{\rho }
    \left(\frac{1}{\meas Q^i_h} \int_{Q^i_h} \psi\right) 
    \left( \frac{1}{\meas Q^i_h} \int_{Q^i_h}\theta\right)
    \underbrace{\int_{Q^i_h} \phi - P_h \phi}_{= 0}
    = 0.
\end{equation}
holds for all $\phi, \psi, \theta \in \L, \rho = \tilde{\rho } 1_{\tilde{\Omega}_1}$ 
with $\tilde{\Omega}_1$ satisfying the compatibility assumption \eqref{eq:tildeOmega_compatbility} for some $I \subset \{1,\ldots, N_h\}$.
Our goal is to
rewrite $(\tilde{u}- \tilde{u}_h,g)_{L^2}$ as a sum of terms each
containing multiple factors of a type
\begin{equation}\label{eq : aux_diff}
    v - P_h v \text{ for } v \in L^1(\Omega)
\end{equation}
so that we can apply \eqref{eq : estim_ph} and deduce an estimate
with respect to $d_h$.

Let $p$ be the solution to the adjoint \eqref{eq : adjoint}.
We note that the adjoint \eqref{eq : adjoint} differs from the PDE \eqref{eq : complex_pde} by the sign of the boundary term and the source term. 
We test \eqref{eq : complex_pde} and \eqref{eq:helmholtz_locally_averaged} with $p$, subtract the latter from the former, and insert 
a suitable zero to obtain 
\begin{equation*}
\int_\Omega  \nabla (u-u_h) \cdot \nabla\conj{p} = 
    k_0^2\int_\Omega (u-P_hu_h)(1+qw)\conj{p} + k_0^2\int_\Omega (P_h u_h) q(w - P_hw) \conj{p} + ik_0 \int_{\partial \Omega} (u-u_h) \conj{p}
\end{equation*}
Next, we test \eqref{eq : adjoint} with $u-u_h$ and use the conjugation of
the above equation to rewrite the term $\int_{\Omega} \nabla \overline{(u - u_h)}
\cdot \nabla p$ and get 
\begin{equation*}
\begin{aligned}
(g, u-u_h)_{L^2(\Omega, \C)} &= \Bigg(k_0^2\int_\Omega \conj{(u-P_hu_h)(1+qw)} p + k_0^2\int_\Omega \conj{(P_h u_h) q(w - P_hw)} p
\\
&\quad\quad - ik_0 \int_{\partial \Omega} \conj{(u-u_h)} p \Bigg)
                 - k_0^2 \int_\Omega p (1+qw) \conj{(u - u_h)} 
\\
&\quad\quad                  
+ ik_0 \int_{\partial \Omega}  p \conj{(u-u_h)}.
\end{aligned}
\end{equation*}
Note that the conjugation implied the negative sign of the boundary term so that the boundary terms on the right-hand side vanish. 
This together with the combination of the first and the forth terms implies 
\begin{equation*}
    \begin{aligned}
        (g, u-u_h)_{L^2(\Omega, \C)}  = k_0^2\int_\Omega \conj{(u_h-P_hu_h)}(1+qw) p + k_0^2\int_\Omega \conj{(P_h u_h) }q(w - P_hw) p.
    \end{aligned}
\end{equation*}
We now insert a suitable zero to obtain 
\begin{equation*}
    \begin{aligned}
        (g, u-u_h)_{L^2(\Omega, \C)}  = k_0^2\int_\Omega \conj{(u_h-P_hu_h)}q(w-P_hw) p + k_0^2\int_\Omega \conj{(u_h-P_hu_h)}(1+ qP_hw) p \\
                + k_0^2\int_\Omega \conj{(P_h u_h)} q(w - P_hw) p.
    \end{aligned}
\end{equation*}
Inserting two zeros of the form \eqref{eq : galerkin} yields 
\begin{equation*}
    \begin{aligned}
        (g, u-u_h)_{L^2(\Omega, \C)} = k_0^2\int_\Omega \conj{(u_h-P_hu_h)}q(w-P_hw) p + k_0^2\int_\Omega \conj{(u_h-P_hu_h)}(1+ qP_hw)(p-P_hp)  \\
                + k_0^2\int_\Omega \conj{(P_h u_h)} q(w - P_hw) (p-P_h p).
    \end{aligned}
\end{equation*}
Now we employ the Hölder inequality together with \eqref{eq : estim_ph} 
to estimate each of the factors and obtain 
\begin{equation*}
    \begin{aligned}
        (g, u-u_h)_{L^2(\Omega, \C)} \leq 
        &k_0^2 \Big( \alpha(\Omega) d_h \|\nabla u_h\|_{L^2(\Omega, \C^n)} 
                                    \tilde{q} \sqrt{\alpha(\Omega)|w_u-w_\ell| d_h \TV(w)}
                                    \|p\|_{L^\infty(\Omega, \C)} \\
                                    + &\alpha(\Omega) d_h \|\nabla u_h\|_{L^2(\Omega, \C^n)} 
                                    (1+\tilde{q}\|P_h w\|_{L^\infty(\Omega)})
                                    \alpha(\Omega) d_h \|\nabla p\|_{L^2(\Omega, \C^n)} 
                                    \\
                                    + &\|P_h u_h\|_{L^\infty(\Omega, \C)}
                                    \tilde{q} \sqrt{\alpha(\Omega)|w_u-w_\ell| d_h \TV(w)}
                                    \alpha(\Omega) d_h \|\nabla p\|_{L^2(\Omega, \C^n)} \Big) ,
    \end{aligned}
\end{equation*}
where we have used $\|w - P_hw\|_{L^2(\Omega)} \le \sqrt{(w_u - w_\ell)\|w - P_hw\|_{L^1(\Omega)}}$.

The usage of \Cref{prp:helmholtz_existence_uniqueness}, \Cref{rmk : adjoint_h1}, 
\Cref{prp:helmholtz_locally_averaged_existence_uniqueness}, \Cref{prp:helmholtz_linfty_bounds}, 
\Cref{prp:helmholtz_locally_averaged_linfty_bounds}, \Cref{rmk : adjoint_linf}
and division by $\|g\|_{L^2(\Omega, \C)}$ on both hand sides yields the claimed bound.  
\end{proof}
\subsection{Approximation of Total Variation}\label{subsec : verification_of_ass_tv}
On one-dimensional domains $\Omega \subset \R$, the total variation of a given function can be approximated by the total variation of approximating
step functions, that is, the functionals $\TV \circ P_\tau$ $\Gamma$-converge to the functional $\TV$. This fact is used in \cite{manns}.
On multi-dimensional domains $\Omega \subset \R^n$, $n \ge  2$, a gap may persist between $\TV \circ P_\tau$ and $\TV$ as $\tau \searrow 0$;
see, e.g., Example 2.10 in \cite{schiemann2025discretization}. Consequently, we use a discretization of the dual formulation of $\TV$ by means of a
Raviart--Thomas finite-element ansatz as introduced in \cite[(3.4)]{corentin} for $n \ge 2$. This choice will satisfy \cref{ass : bv}. In this section, we restrict to rectangular domains and make the more specific assumption (cf.\ \cref{ass : grid_cells})
that the discretization grid for $w$ is an
axis-aligned grid of quadrilaterals and that
a bounded eccentricity condition is satisfied
when a sequence of these grids with vanishing mesh size
is considered. This is formalized below.
\begin{assumption}\label{ass:tau_grids_for_tv}
For the domain $\Omega$ and the discretization grids
(partitions of $\Omega$) $\tilde{Q}_{\tau} = \{ \tilde{Q}_\tau^1,\ldots,\tilde{Q}_\tau^{N_\tau}\}$
as introduced in \cref{sec : mcc_theory}, we
assume the following.
\begin{enumerate}
\item $\Omega = (a_1,b_1) \times \ldots \times (a_n,b_n)$.
\item For all $\tilde{Q}^j \in \tilde{\calQ}_{\tau}$, it holds $\tilde{Q}^j = I_1 \times \ldots \times I_n$ for bounded intervals $I_1$, $\ldots$, $I_n \subset \R$.
\item We assume that the grids $\tilde{\calQ}_{\tau}$
satisfy a bounded eccentricity condition analogously
to \cref{ass : grid_cells} \ref{itm:bounded_eccentricity}
as $\tau \searrow 0$.
\end{enumerate}
\end{assumption}
We define
\begin{gather}\label{eq:dfn_tv_tau}
\TVt(w) \coloneqq \sup \left\{ \int_{\Omega} w \div\phi : \phi \in \RT, \|\phi\|_{L^\infty(\Omega)} \leq 1 \right\}
\end{gather}
for $w \in L^2(\Omega)$, where $RT0^\tau_0 \subset \Hzerodiv(\Omega)$ 
denotes the Raviart--Thomas fields of the lowest order with 
vanishing normal trace.

\paragraph{\Cref{ass : bv} \ref{itm:consistency}.} To prove the claims, we use the smooth approximation property
\begin{gather}\label{eq:h0div_smooth_approx}
\overline{C^{1}_C(\Omega,\R^n)}^{\Hzerodiv} = \Hzerodiv(\Omega),
\end{gather}
which is proven in \cite{hintermuller2015density}.
\begin{lemma}
For all $\tau$, the functional $\TV_\tau : L^2(\Omega) \rightarrow [0, \infty)$ is lower semi-continuous wrt.\ $L^1$-convergence,  
$\TVt \circ P_\tau \leq \TV$, and $\TVt \circ P_\tau \leq \TVt$.
\end{lemma}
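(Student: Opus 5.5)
The plan is to prove the three claims separately, using the dual (supremum) structure of $\TVt$ in \eqref{eq:dfn_tv_tau} throughout.

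\emph{Lower semicontinuity.} For a fixed $\phi \in \RT$ with $\|\phi\|_{L^\infty(\Omega)} \le 1$, the divergence $\div\phi$ is piecewise constant on the grid $\tilde{\calQ}_\tau$, so it lies in $L^\infty(\Omega)$. Hence $w \mapsto \int_\Omega w \div\phi$ is continuous with respect to $L^1$-convergence. A pointwise supremum of continuous functionals is lower semicontinuous, and this gives the first claim. It also gives finiteness of $\TVt$: since $\RT$ is finite-dimensional and the constraint set is bounded in $L^\infty$, we have $\TVt(w) \le \|w\|_{L^1(\Omega)}\sup_\phi\|\div\phi\|_{L^\infty(\Omega)} < \infty$.

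\emph{The inequality $\TVt \circ P_\tau \le \TVt$.} Here the key observation is that $\div\phi$ is constant on each cell $\tilde{Q}_\tau^j$ for every $\phi \in \RT$. Therefore
\[
\int_\Omega (P_\tau w)\div\phi = \int_\Omega w\, P_\tau(\div\phi) = \int_\Omega w \div\phi,
\]
where the first equality uses the self-adjointness of $P_\tau$ in $L^2(\Omega)$. Taking the supremum over $\phi$ gives $\TVt(P_\tau w) = \TVt(w)$, which is in particular the required inequality.

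\emph{The inequality $\TVt \circ P_\tau \le \TV$.} By the previous step, $\TVt(P_\tau w) = \TVt(w)$, so it suffices to show $\TVt(w) \le \TV(w)$. For that, I would show that every admissible $\phi$ in \eqref{eq:dfn_tv_tau} satisfies $\int_\Omega w\div\phi \le \TV(w)$. The classical dual definition of $\TV$ takes the supremum over $\phi \in C^1_C(\Omega,\R^n)$ with $|\phi| \le 1$, whereas here $\phi \in \RT \subset \Hzerodiv(\Omega)$. I would close this gap with the density result \eqref{eq:h0div_smooth_approx}: choose $\phi_k \in C^1_C(\Omega,\R^n)$ with $\phi_k \to \phi$ in $\Hzerodiv(\Omega)$. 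Then $\int_\Omega w\div\phi_k \to \int_\Omega w\div\phi$, since $w \in L^2(\Omega)$ (indeed $w \in \wad \subset L^\infty(\Omega)$).

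The main obstacle is that the approximants $\phi_k$ must also respect the pointwise bound $\|\phi_k\|_{L^\infty(\Omega)} \le 1$, or at least $\|\phi_k\|_{L^\infty(\Omega)} \le 1+\varepsilon_k$ with $\varepsilon_k \to 0$. Mere $\Hzerodiv$-density does not provide this. The first thing I would try is to check whether the construction in \cite{hintermuller2015density} is by mollification after a shrinking of the support, which preserves the sup norm up to $1+o(1)$. If it is, then $\int_\Omega w\div\phi_k \le (1+\varepsilon_k)\TV(w)$, and passing to the limit gives the bound.

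Failing that, I would use the structure of the problem directly. On the rectangular domain of \cref{ass:tau_grids_for_tv}, a lowest-order Raviart--Thomas field with vanishing normal trace is componentwise piecewise linear, so $|\phi_i| \le 1$ holds pointwise. I would then apply a direct argument: scale the field toward the interior via $x \mapsto \phi(c + (x-c)/s)$ with $s<1$ and mollify. This keeps the $L^\infty$ bound, gives compact support, and converges in $\Hzerodiv(\Omega)$ because the vanishing normal trace means the extension of $\phi$ by zero has an $L^2$ divergence. Either route yields $\TVt(w) \le \TV(w)$, which completes the proof.
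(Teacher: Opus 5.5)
Your proof is correct, and its skeleton matches the paper's. The identity $\TVt = \TVt\circ P_\tau$ comes from $\div\phi$ being constant on each $\tau$-cell; your self-adjointness formulation is the same computation the paper writes out. The inequality $\TVt\le\TV$ comes from enlarging the admissible $\RT$ fields to $\Hzerodiv(\Omega)$ and approximating by $C^1_C$ fields. You differ from the paper in two places, and both are to your advantage.

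\emph{Lower semicontinuity.} The paper argues ``analogously'' to the lower semicontinuity of $\TV$ in \cite{ambrosio2000}, after approximating Raviart--Thomas fields by $C^1$ fields. Your observation is more direct: $\div\phi\in L^\infty(\Omega)$ already makes each map $w\mapsto\int_\Omega w\div\phi$ continuous under $L^1$-convergence. A supremum of such maps is then lower semicontinuous, with no approximation needed. Your finite-dimensionality bound also justifies the codomain $[0,\infty)$, which the paper does not address.

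\emph{The bound $\TVt\le\TV$.} You rightly point out that density of $C^1_C$ in $\Hzerodiv(\Omega)$, as literally stated in \eqref{eq:h0div_smooth_approx}, does not by itself preserve the constraint $\|\phi\|_{L^\infty}\le 1$. The paper's chain of equalities of suprema tacitly needs density of the \emph{constrained} set, for which it relies on \cite{hintermuller2015density}. Your dilation-plus-mollification argument is a self-contained substitute. It is valid on the rectangle of \cref{ass:tau_grids_for_tv}, or on any domain star-shaped about $c$. Since only the inequality is required, not equality of the two suprema, it suffices.

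Two small remarks on that argument. First, the piecewise-linear structure of $\RT$ fields and the componentwise bound $|\phi_i|\le 1$ play no role. What you need is the a.e.\ Euclidean bound $|\phi(x)|\le 1$ given by admissibility. Both dilation and mollification preserve it, the latter by convexity of the unit ball. Second, the divergence of the dilated field acquires a factor $1/s$. This is harmless, since $1/s\to 1$ as $s\to 1$.
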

\begin{proof}
The lower semi-continuity follows analogously to the lower semi-continuity of $\TV$ as shown in Remark 3.5 and Proposition 3.6
in \cite{ambrosio2000} with an approximation of the Raviart--Thomas field by a continuously differentiable field, which is possible
by virtue of \eqref{eq:h0div_smooth_approx}.

For the second and third claim, we first observe that $\TVt = \TVt \circ P_\tau$. Indeed, for $\phi \in \RT$ we have that $\div \phi$
is constant per grid cell $\tilde{Q}_\tau^j$, $j\in \{1,\ldots,N_{\tau}\}$, which implies
\begin{align*}
\int_\Omega w \div \phi &=\sum_{j = 1}^{N_\tau} \int_{\tilde{Q}_\tau^j} w \div \phi
=\sum_{j = 1}^{N_\tau} \frac{1} {\meas (\tilde{Q}_\tau^j)} \int_{\tilde{Q}_i} \div \phi \int_{\tilde{Q}_i} w \\
&=\sum_{j = 1}^{N_\tau}n \frac{1} {\meas (\tilde{Q}_\tau^j)} \int_{\tilde{Q}_\tau^j} \div \phi (P_\tau w)
=\int_\Omega \div \phi (P_\tau w),
\end{align*}
yielding $\TVt = \TVt \circ P_\tau$.
Then the second and third claim follow from
\begin{gather*}
\begin{aligned}
\TVt(P_\tau(w)) = \TVt(w)
&\le \sup\left\{ \int_{\Omega} \div \phi w : \phi \in \Hzerodiv(\Omega), \|\phi\|_{L^\infty(\Omega, \R^n)} \leq 1 \right\} \\
&\underset{\mathclap{\eqref{eq:h0div_smooth_approx}}}=\sup\left\{ \int_{\Omega} \div \phi w : \phi \in C_C^1(\Omega, \R^n), \|\phi\|_{L^\infty(\Omega, \R^n)}  \leq 1 \right\} = \TV(w)
\end{aligned}
\end{gather*}
for all $w \in L^2(\Omega)$.
\end{proof}

\paragraph{\Cref{ass : bv} \ref{itm:TVh_gamma}.}
The $\Gamma$-convergence is a straightforward consequence 
of existing results under \cref{ass:tau_grids_for_tv}.
\begin{lemma}
Let \cref{ass:tau_grids_for_tv} hold.
$\TV_\tau \circ P_{\tau}$ $\Gamma$-converges to $\TV$ for $\tau \searrow 0$
on $L^2(\Omega)$ with respect to $L^1$-convergence.
\end{lemma}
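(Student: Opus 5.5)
We prove the two halves of $\Gamma$-convergence, the liminf inequality and the existence of recovery sequences, using the identity $\TVt = \TVt \circ P_\tau$ from the previous lemma.

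\textbf{Liminf inequality.} Let $w_\tau \to w$ in $L^1(\Omega)$, and let $\phi \in C_C^1(\Omega,\R^n)$ with $\|\phi\|_{L^\infty} \le 1$. We take an interpolant $\phi_\tau \in \RT$ satisfying $\|\phi_\tau\|_{L^\infty} \le 1$ and $\div \phi_\tau \to \div \phi$ uniformly.
\begin{itemize}
\item On axis-aligned quadrilateral grids, the standard $RT_0$ (Raviart--Thomas) interpolant uses face averages of normal components. For fields with vanishing normal trace it keeps $\|\phi_\tau\|_{L^\infty} \le \|\phi\|_{L^\infty}$ componentwise, since each component of $\phi_\tau$ is a convex combination of face averages of the corresponding component of $\phi$.
\item By the commuting diagram property, $\div \phi_\tau = P_\tau \div \phi$. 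Since $\div\phi$ is uniformly continuous and the bounded eccentricity of \cref{ass:tau_grids_for_tv} forces cell diameters to zero, this converges uniformly to $\div\phi$.
\end{itemize}
Then
\[
\int_\Omega w_\tau \div \phi_\tau \to \int_\Omega w \div \phi,
\]
because $w_\tau \to w$ in $L^1$ and $\div\phi_\tau \to \div\phi$ in $L^\infty$. Hence $\liminf_\tau \TVt(P_\tau w_\tau) = \liminf_\tau \TVt(w_\tau) \ge \int_\Omega w\div\phi$. Taking the supremum over $\phi$ gives $\liminf_\tau \TVt(P_\tau w_\tau) \ge \TV(w)$.

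\textbf{Recovery sequence.} For $w \in \BV(\Omega)$ we simply take the constant sequence $w_\tau = w$. By the previous lemma, $\TVt(P_\tau w) \le \TV(w)$, so $\limsup_\tau \TVt(P_\tau w) \le \TV(w)$. If $\TV(w) = \infty$, there is nothing to prove.

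Combining the two halves yields $\Gamma$-convergence. This is also the content of the corresponding result in \cite{corentin} (and \cite{caillaud2023error}), which we would cite for the interpolation estimates.

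\textbf{Main obstacle.} The delicate step is the $L^\infty$-stability of the $RT_0$ interpolant under the constraint $\|\phi_\tau\|_{L^\infty}\le 1$. This is exactly where the axis-aligned rectangular structure of \cref{ass:tau_grids_for_tv} is used. If the componentwise argument fails, we would first try scaling $\phi_\tau$ by $(1+\varepsilon_\tau)^{-1}$ with $\varepsilon_\tau \to 0$, using uniform continuity of $\phi$.
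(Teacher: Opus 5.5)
Your proof is correct, but only once the rescaling you list as a fallback becomes part of the argument; it is not optional. The componentwise convex-combination property gives $|(\Pi_\tau\phi)_i|\le 1$ for each $i$. In the Euclidean pointwise norm used in \eqref{eq:dfn_tv_tau}, that yields only $\|\Pi_\tau\phi\|_{L^\infty(\Omega,\R^n)}\le\sqrt{n}$, which is exactly the $\sqrt{n}$ of \Cref{lem:sqrtd_estimate_for_dg0}. On a fixed grid the bound $1$ really can fail when $\phi$ rotates within one cell.

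The repair works as you suggest. For $x$ in a cell $K$, every face average entering $\Pi_\tau\phi(x)$ is taken over points within distance $\operatorname{diam}K\le d_\tau$ of $x$, where $d_\tau$ is the maximal cell diameter. Hence $|\Pi_\tau\phi(x)-\phi(x)|\le\sqrt{n}\,\omega_\phi(d_\tau)$, with $\omega_\phi$ the modulus of continuity of $\phi$. Then $\phi_\tau:=\Pi_\tau\phi/(1+\sqrt{n}\,\omega_\phi(d_\tau))$ is admissible and has zero normal trace. Its divergence $\div\phi_\tau=P_\tau\div\phi/(1+\sqrt{n}\,\omega_\phi(d_\tau))$ still converges uniformly to $\div\phi$. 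Note also that cell diameters vanish because $\tau$ is the mesh size, not because of bounded eccentricity.

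With that fix, you differ from the paper only in the liminf half. The limsup half is identical: constant recovery sequence plus $\TVt\circ P_\tau\le\TV$.

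The paper constructs no test fields. It observes that $P_\tau w_\tau\to w$ in $L^1(\Omega)$ and then invokes Theorem 2.9 of \cite{schiemann2025discretization}, which is the liminf inequality for this Raviart--Thomas dual discretization. Your argument is essentially a self-contained proof of that cited result. It uses the lowest-order RT interpolant, its commuting property $\div\Pi_\tau=P_\tau\div$, and uniform continuity of $C^1_c$ test fields in the dual definition of $\TV$.

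The paper's version is shorter but delegates the real work, and the matching of grid hypotheses, to the external theorem. Yours is elementary and makes clear that only three things are used: conformity of the RT space, the commuting diagram, and vanishing cell diameters. It also needs no separate argument that $P_\tau w_\tau\to w$, since $\div\phi_\tau$ is cellwise constant anyway.
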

\begin{proof}
The $\lim \sup$-inequality follows from \Cref{ass : bv}
\ref{itm:consistency} with the choice $f_\tau \coloneqq f$,
yielding $\TVt (P_\tau f_\tau) =  \TVt (P_\tau f)\leq \TV(f)$.
For the $\lim\inf$-inequality, let $f \in L^1(\Omega)$ and $f_\tau \to f$ 
in $L^1(\Omega)$ for $\tau \searrow 0$. Then $P_\tau(f_\tau) \to f$
in $L^1(\Omega)$
by means of the non-expansiveness of $P_\tau$ and the Lebesgue's differentiation theorem. Then, Theorem 2.9
in \cite{schiemann2025discretization} implies the $\lim\inf$-inequality.
\end{proof}

\paragraph{\Cref{ass : bv} \ref{itm:TVh_compactness}.}
We give a lengthy proof of \Cref{ass : bv} in several steps, where we settle an open question (see Remark 3.15 in
\cite{schiemann2025discretization}) as a corollary of our first auxiliary result.
The first step is to overestimate the total variation of a piecewise constant function on the $\tau$-grid $\tilde{\calQ}_\tau \coloneqq \{\tilde{Q}_\tau^1,\ldots,\tilde{Q}_\tau^{N_\tau}\}$.

\begin{lemma}\label{lem:sqrtd_estimate_for_dg0}
Let \cref{ass:tau_grids_for_tv} hold.
Let $n \in \{1,2,3\}$.
Let $w \in L^2(\Omega)$ be a piecewise constant function on the $\tau$-grid,
that is, $w = \sum_{j=1}^{N_\tau}\chi_{\tilde{Q}^j_\tau} w_j$.
Then
\[ \TV(w) \le \sqrt{n} \TVt(w). \]
\end{lemma}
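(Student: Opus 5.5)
We compute both sides explicitly for a piecewise constant $w$ on the axis-aligned grid and compare them facet by facet. The exact total variation of such a $w$ is a sum over interior faces $F$ of the grid (the $(n-1)$-dimensional interfaces between neighbouring cells $\tilde{Q}^j_\tau$, $\tilde{Q}^k_\tau$):
\[
\TV(w) = \sum_{F} \mathcal{H}^{n-1}(F)\,|w_j - w_k| ,
\]
because the jump set of $w$ consists exactly of these faces. Here we use that, on an axis-aligned grid, faces may be non-conforming (hanging nodes). In that case we refine $F$ into the maximal common pieces, and the formula holds with $F$ ranging over those pieces. For the lower bound on $\TVt(w)$, we construct an admissible field in $\RT$ explicitly.

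The main step is the choice of the Raviart--Thomas field. The lowest-order RT degrees of freedom are the normal fluxes on faces. On each interior face $F$ with unit normal $e_m$ (a coordinate direction), oriented from cell $j$ to cell $k$, we prescribe the constant normal component
\[
\phi\cdot e_m|_F \coloneqq \sgn(w_j - w_k).
\]
On boundary faces we prescribe zero, which yields the vanishing normal trace. On an axis-aligned box, lowest-order RT functions have the form $\phi_m(x) = a_m + b_m x_m$, so the $m$-th component interpolates linearly in $x_m$ between the values on the two opposite faces orthogonal to $e_m$. Each component therefore stays in $[-1,1]$, giving $|\phi(x)|_{\R^n} \le \sqrt{n}$. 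Hence $\phi/\sqrt{n}$ is admissible in \eqref{eq:dfn_tv_tau}.

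Integrating by parts cell-wise, with $w$ constant on each cell, gives
\[
\int_\Omega w \div\phi = \sum_j w_j \int_{\partial \tilde{Q}^j_\tau} \phi\cdot\nu = \sum_F \mathcal{H}^{n-1}(F)\,(w_j - w_k)\,\sgn(w_j-w_k) = \TV(w).
\]
The boundary faces drop out because the normal trace vanishes there. It follows that $\TVt(w) \ge \tfrac{1}{\sqrt n}\TV(w)$, which is the claim.

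The main obstacle is the non-conforming case. If a large face of one cell borders several smaller cells of the neighbour, a lowest-order RT field on the large cell can only have constant normal flux across that whole face. The signs $\sgn(w_j-w_k)$ cannot then be prescribed independently per piece. The first remedy I would try is to treat the grid as conforming. The paper's grids arise as axis-aligned tensor-type partitions, so we may regard $\RT$ as built on the conforming tensor refinement, or simply assume a tensor-product grid. Under that assumption every face is shared by exactly two cells and the construction above goes through verbatim. If hanging faces must be allowed, I would pass to the common tensor refinement and note that $w$ remains piecewise constant on it; the point still to be checked is that $\TVt$ is monotone under the refinement (defined with the RT space of the refined grid), which requires care in stating which grid $\RT$ refers to.
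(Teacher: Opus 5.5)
Your proposal is correct and is essentially the paper's proof. The paper builds the same field $\phi\in\RT$ coordinate-wise as sums of hat functions along towers of cells, with normal component $\sgn(w_m-w_{m+1})$ on each interior facet and $0$ on $\partial\Omega$. It bounds each component by $1$ to get $\|\phi\|_{L^\infty}\le\sqrt{n}$ and checks $\int_\Omega w\div\phi=\TV(w)$ facet by facet, which is exactly your degree-of-freedom construction with cell-wise integration by parts.

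The paper likewise implicitly works on a tensor-product grid, writing every cell as $I_1^{k_1}\times\cdots\times I_n^{k_n}$ with global interval partitions, so your restriction to that case matches. Drop the refinement fallback, though. Refining enlarges the RT space, so it only bounds a quantity $\ge\TVt(w)$. Moreover, with hanging facets the inequality can genuinely fail: on $(0,2h)\times(0,h)$ with $w=0$ on $(0,h)^2$ and $w=\mp1$ on the two right cells of height $h/2$, one gets $\TV(w)=3h>2\sqrt{2}\,h=\sqrt{2}\,\TVt(w)$.
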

\begin{proof}
Our proof relies on a construction of a vector field $\phi \in \RT$ such that
\begin{gather}\label{eq:phi_eq_tv}
\TV(w) = \int_{\Omega} \div\phi w
\quad\text{and}\quad
\|\phi\|_{L^\infty} \le \sqrt{n}
\end{gather}
hold. Dividing by $\sqrt{n}$ then implies
\[ \frac{1}{\sqrt{n}}\TV(w) 
\underset{\mathclap{\eqref{eq:phi_eq_tv}}}= 
\int_{\Omega} \div\Big(\frac{\phi}{\sqrt{n}}\Big) w 
\underset{\mathclap{\eqref{eq:dfn_tv_tau}}}\le \TVt(w),
\]
which proves the claim. We provide the construction of the field $\phi$ only for the case $n=3$ but the claim can
be shown in lower dimensions by following the same idea. 

We provide the detailed construction for our  special case 
that $\Omega = (a_1,b_1)\times \ldots \times (a_n,b_n)$ 
and that our $\tau$-grid is a set of $n$-dimensional
axis-aligned hyperrectangles. Consequently, we can
write each $\tilde{Q}^j \in \tilde{\calQ}_\tau$ as the
Cartesian product of $n$ intervals, that is,
\[ \tilde{Q}^j 
   = I_1^{k_1} \times \ldots \times I_n^{k_n} 
   \eqqcolon \tilde{Q}^{k_1,\ldots,k_n}
\]
for some $k_i \in \{1,\ldots,N_i\}$, $N_i \in \N$,
for $i \in \{1,\ldots,n\}$ and where the
disjoint intervals $I_i^{k}$ are indexed in ascending 
order  in the sense that $\sup I_i^{k-1} = \inf I_i^{k}$ 
for all $k \in \{1,\ldots,N_i\}$ for all $i \in \{1,\ldots,n\}$.

\textbf{Main idea.} The main idea behind our construction
is the following. Since $w$ is piecewise 
constant, $\TV(w)$ is the sum of the facet areas between 
different level sets of $w$ weighted by the respective
jumps height between the level sets over them; see, for 
example, Lemma 2.1 in \cite{manns2023on}. Moreover,
the individual coordinate functions of $\RT$-fields
$\phi$ in our rectangular setting are piecewise affine,
globally continuous one-dimensional functions with trace zero,
that is, sums of so-called hat functions. We thus
construct a coordinate function $\phi_i$,
$i \in \{1,\ldots,d\}$ such that it is a sum of hat
functions with values between $-1$ and $1$
that capture all jumps of $w$ along the
$i$-th coordinate when the other coordinates are kept fixed.
Then the coordinate functions that are constructed in 
this way are shown to yield a vector field $\phi$
in $\RT$ with $\|\phi\|_{L^\infty} \le \sqrt{n}$
and satisfy \eqref{eq:phi_eq_tv}.

\textbf{Coordinate function construction.} Since the construction works completely analogous for all $\phi_i$, 
$i \in \{1,\ldots,n\}$, we only perform the construction of $\phi_1$. To this end, we fix $k_2,\ldots,k_n$ and
abbreviate the sets $\tilde{Q}_m \coloneqq  \tilde{Q}^{m,k_2,\ldots,k_n}$ for
$m \in \{0, \ldots, N_1 - 1\}$, which form a \emph{tower of hyperrectangles} along the first coordinate.
We also abbreviate $w_m \coloneqq \frac{1}{\meas \tilde{Q}} \int_{\tilde{Q}_m} w \equiv w|_{\tilde{Q}_m}$
because $w$ is piecewise constant on $\tilde{\calQ}_{\tau}$.
We define $\phi_1^{k_2,\ldots,k_n} \coloneqq \sum_{m=1}^{N_1-1}\psi^m$,
where $\psi^m : \Omega \to \R$ is a hat function along the first coordinate that captures the potential jump of
$w$ between $\tilde{Q}_m$ and $\tilde{Q}_{m+1}$; see \cref{fig : phi1}. Using that $\sgn(w_m - w_{m+1})$
reflects the change of $w$ at the interface $\overline{\tilde{Q}_m} \cap \overline{\tilde{Q}_{m+1}}$
for all $m \in \{1,\ldots,N_1 - 1\}$, $\psi^m$ is defined by
\[
\psi^m \coloneqq \sgn(w_m - w_{m+1}) \big(1_{\overline{\tilde{Q}_m} \cap \overline{\tilde{Q}_{m+1}}}
+\tfrac{z - x_m}{\ell_m}  
1_{\mathring{\tilde{Q}}_m} - \tfrac{z-x_{m+2}}{\ell_{m+1}} 1_{\mathring{\tilde{Q}}_{m+1}} \big),
\]
where $x_{m} \coloneqq \inf \{ y_1 : (y_1,\ldots,y_n) \in \tilde{Q}_m \}$ for all $m \in \{1,\ldots,N_1\}$,
$x_{N_1} \coloneqq \sup \{y_1 : (y_1,\ldots,y_n) \in \tilde{Q}_{N_1}\}$, and  $\ell_m = x_{m+1} - x_{m}$
for all $m \in \{1,\ldots,N_1\}$. By construction, we have
\begin{gather}\label{eq : phim_bounded}
	\|\psi^m\|_{L^\infty(\Omega)} \le 1
\end{gather}
and, for the weak derivative (note that $\psi^m$ ist not differentiable everywhere) in the first argument,
\begin{gather}\label{eq:phim_der}
\partial_1 \psi^m 
= \sgn(w_m - w_{m+1})\big(\ell_m^{-1} 1_{\mathring{\tilde{Q}}_m} - \ell_{m+1}^{-1} 1_{\mathring{\tilde{Q}}_{m+1}}\big)
\text{ a.e.\ in }\Omega.
\end{gather}
Moreover, the support of each $\psi^m$ is $\overline{\tilde{Q}_m \cup \tilde{Q}_{m+1}}$ and we obtain
\begin{gather} \label{eq : phi1}
\phi_1^{k_2,\ldots,k_n} =
\begin{cases*}
\psi^{m-1} + \psi^m 
&in $\overline{\tilde{Q}_{m}}$ for $m \in \{2, \ldots, N_1-1\}$,\\
\psi^1 &in $\mathring{\tilde{Q}}_{1}$, \\
\psi^{N_1-1} &in $\mathring{\tilde{Q}}_{N_1}$,\\
0 &\text{otherwise}.
\end{cases*}
\end{gather}
Now, we define our function $\phi_1$ by
\[ \phi_1  \coloneqq \sum_{k_2=1}^{N_2}\cdots \sum_{k_n=1}^{N_n} \phi^{k_2,\ldots,k_n}_1, \]
where we highlight that the supports of $\phi^{k_2^a,\ldots,k_n^a}_1$ and $\phi^{k_2^b,\ldots,k_n^b}_1$
have at most a trivial overlap for $(k_2^a,\ldots,k_n^a) \neq (k_2^b,\ldots,k_n^b)$ by construction.
\begin{figure}
	\centering
	\begin{tikzpicture}[xscale=8, yscale=4]
	\def\eps{0.005}
	\draw[->] (0,0) -- (1.05,0) node[below right] {};
	
	\draw[thick,->] (0,-0.6) -- (0,0.7) node[left] {};
	
	\foreach \x/\label in {0.25/{$x_2$}, 0.5/{$x_3$}, 0.75/{$x_4$}, 1/{$x_5$}} {
		\draw[gray!50,dashed] (\x,-0.55) -- (\x,0.65);
		\node[below] at (\x,0) {\label};
	}
	\node[below] at (0,0) {$x_1$};
	
	\draw (-0.01,0.5) -- (0.01,0.5) node[right] {\footnotesize $1$};
	\draw (-0.01,-0.5) -- (0.01,-0.5) node[right] {\footnotesize $-1$};
	
	\node at (0.125,-0.07) {$\tilde{Q}_1$};
	\node at (0.375,-0.07) {$\tilde{Q}_2$};
	\node at (0.625,-0.07) {$\tilde{Q}_3$};
	\node at (0.875,-0.07) {$\tilde{Q}_4$};
		
	\draw[ultra thick,blue, opacity = 0.5] (0,0) -- (1,0);
	\node[blue,above] at (0.875,0.02) {$\psi^3$};

	\draw[thick,orange] (0,0) -- (0.25,0.5) -- (0.5,0);
	\draw[thick,orange, opacity = 0.9] (0.5,\eps) -- (1,\eps);
	\node[orange,above] at (0.25,0.5) {$\psi^1$};
	
	\draw[red!70!blue, opacity = 0.8] (0,-\eps) -- (0.25,-\eps);
	\draw[red!70!blue] (0.25,0) -- (0.5,-0.5) -- (0.75,0); 
	\draw[red!70!blue, opacity = 0.8] (0.75,- \eps) -- (1,-\eps); 
	\node[red!70!blue,below] at (0.5,-0.5) {$\psi^2$};

	\draw[very thick,dashed,green!60!black] (0,0.45) -- (0.25,0.45) node[midway,above] {$w^1$};
	\draw[very thick,dashed,green!60!black] (0.25,0.25) -- (0.5,0.25) node[midway,above] {$w^2$};
	\draw[very thick,dashed,green!60!black] (0.5,0.35) -- (0.75,0.35) node[midway,above] {$w^3$};
	\draw[very thick,dashed,green!60!black] (0.75,0.35) -- (1,0.35) node[midway,above] {$w^4$};
	
	\end{tikzpicture}
	\caption{Construction of $\psi^1, \ldots, \psi^3$ for given $w$ with partition
		$\tilde{Q}_1,\ldots,\tilde{Q}_4$.\protect\footnotemark}\label{fig : phi1}
\end{figure}
\footnotetext{The LLM GPT-4o mini was used during the creation of the tikz code for this figure.}
We construct $\phi_2,\ldots,\phi_n$ analogously to $\phi_1$ and subsequently define our candidate vector field as
$\phi \coloneqq (\phi_1, \phi_2, \phi_3)^T$. Next, we verify that $\phi \in \RT$ and the satisfaction of \eqref{eq:phi_eq_tv}. 

\textbf{Inclusion in $\RT$.} 
The divergence $\div \phi$ is constant per hyperrectangle $\tilde{Q}^{k_1,\ldots,k_n}$ because $\partial_j \phi_i = 0$ 
for $j \neq 0$ and $\partial_j \phi_j$ is piecewise constant as a derivative of a piecewise affine function for all
$j \in \{1,\ldots,n\}$. We now consider the facets of the (closed) hyperrectangles and their outer normal vectors
in order to show that the normal components of $\phi$ are continuous and constant per facet. 

Because the hyperrectangles $\tilde{Q}^{k_1,\ldots,k_n}$ are axis-aligned, the outer normal vectors are also axis-aligned
where defined. We give the proof for the normal components along the first coordinate for fixed $k_2,\ldots,k_n$
being fixed and use the abbreviated notation introduced above. Again, the claim for other coordinates follows
with the same argument. Let $E_m$ denote the common facet of the hyperrectangles $\tilde{Q}_m$ and $\tilde{Q}_{m+1}$ and 
$n_{E_m}$ denote the corresponding unit outer normal vector pointing into the positive direction.
Specifically, $n_{E_m} = e_1$, where $e_1$ is the first standard basis vector in $\R^n$. Consequently,
\begin{equation} \label{eq :norm_component}
\begin{aligned}
\phi \cdot n_{E_m} = \phi_1^{k_1,\ldots,k_n} = \sgn(w_m - w_{m+1}) \text{ on (the relative interior of) } E_m
\end{aligned}
\end{equation}
and thus $\phi \cdot n_{E_m}$ is constant per facet after restricting to their relative interiors,
Moreover, $\phi \cdot n_{E_m}$ is continuous for all $m$ because, after extending to the boundary,
$\phi_1$ is continuous on the tower $\overline{\bigcup_{m=1}^{N_1} {\tilde{Q}_m}}$ by construction.
As a consequence, we obtain that $\phi$ is a Raviart--Thomas vector field of lowest order. 

We now show that $\phi$'s normal trace vanishes at the boundary of $\Omega$. To this end, we denote by
$E_{0,k_2,\ldots,k_n} = \{a_1\} \times I^{k_2} \times \cdots \times I^{k_n}$ the facet of the hyperrectangle
$\tilde{Q}^{1,k_2,\ldots,k_n}$ that lies in $\partial \Omega$ and the $x_2,\ldots,x_n$-plane shifted
by $a_1$ along the first coordinate and by $E_{N_1,k_2,\ldots,k_n} = \{b_1\} \times I^{k_2} \times \cdots \times I^{k_n}$
the facet of the hyperrectangle $\tilde{Q}^{N_1,k_2,\ldots,k_n}$ that lies in $\partial \Omega$ and in the $x_2,\ldots,x_n$-plane
shifted by $b_1$ along the first coordinate.
We denote the corresponding outer unit normal vectors by $n_{E_{0,k_2,\ldots,k_n}}^- = -e_1$
and $n_{E_{N_1,k_2,\ldots,k_n}}^+ = e_1$. Then,
\begin{equation*}
\begin{aligned}
\phi \cdot n_{E_{0,k_2,\ldots,k_n}}^-& = -\psi^1 &= 0
&\text{ on } E_{0,k_2,\ldots,k_n},\\
\phi \cdot n_{E_{N_1,k_2,\ldots,k_n}}^+& = \psi^{N_1-1} &= 0
&\text{ on } E_{N_1,k_2,\ldots,k_n}.\\
\end{aligned}
\end{equation*}
Again, the same arguments apply to the other \emph{towers} of hyperrectangles and along the other coordinates
so that $\phi \in \RT$ follows.

\textbf{Boundedness in $L^\infty$.}  Again, we use the abbreviated notation above and restrict to the first
coordinates for several of our arguments since the others are completely analogous.
For fixed $k_2,\ldots,k_n$, we observe that $\phi_1^{k_2,\ldots,k_n} \in \{-1,0,1\}$
holds on the relative interior of each facet $E_m$ according to \eqref{eq :norm_component}. Since, $\phi_1$ is continuous
on $\overline{\bigcup_{m=1}^{N_1} {\tilde{Q}_m}}$ after possibly extending to the boundary and affine between every pair
of adjacent facets $E_{m}$ and $E_{m+1}$, $m \in \{1,\ldots,N_1\}$, we deduce that $|\phi_1| \leq 1$ holds in $\tilde{Q}_m$ for all
$m \in \N$. Since this holds for all possible tuples $(k_2,\ldots,k_n)$, we obtain $|\phi_1| \leq 1$ a.e.\ in $\Omega$.
The same arguments imply $| \phi_1 | \leq 1 $ and $| \phi_2 | \leq 1 $. Again, the same arguments apply for the other
coordinates so that
\[ \|\phi \|_{L^\infty(\Omega, \R^n)}
 = \operatorname{ess}\sup\left\{ \sqrt{\sum_{i=1}^n|\phi_i(x)|^2} : x \in \Omega \right\}. 
\]

\textbf{Realization of $\TV(w)$.} It remains to show that $\TV(w) = \int_\Omega \div\phi w$ holds.
Again, we use the abbreviated notation above and restrict to the first coordinates for several of our arguments
since the others are completely analogous. Due to the construction of $\phi$, we have
\[
\begin{aligned}
\int_{\Omega} \div \phi w 
&= \sum_{i=1}^{n} \int_{\Omega} \partial_i \phi_i w \\
&= \sum_{k_2=1}^{N_2}\cdots\sum_{k_n=1}^{N_n} \int_{\Omega} \partial_1 \phi_1^{k_2,\ldots,k_n} w 
   + \ldots + 
   \sum_{k_1=1}^{N_1}\cdots\sum_{k_{n-1}=1}^{N_{n-1}} \int_{\Omega} \partial_{n} \phi_n^{k_1,\ldots,k_{n-1}} w.
\end{aligned}
\]
We consider $\phi_1^{k_2,\ldots,k_n} = \sum_{m=1}^{N_1 - 1} \psi^m $ for arbitrary but fixed
$k_2,\ldots,k_n$. Because the support of $\psi^{m}$ is $\overline{\tilde{Q}_{m} \cup \tilde{Q}_{m+1}}$,
it holds
\begin{equation*}
\begin{aligned}
\int_{\Omega} \partial_1 \psi^{m} w  
&= \int_{\tilde{Q}_m}     \partial_1 \psi^{m} w 
 + \int_{\tilde{Q}_{m+1}} \partial_1 \psi^{m} w \\
&= \int_{\tilde{Q}_{m}} \ell_m^{-1} \sgn(w_{m} - w_{m+1}) w - \int_{\tilde{Q}_{m+1}} \ell_{m+1}^{-1}\sgn(w_{m} - w_{m+1}) w
&&{\scriptstyle{\eqref{eq:phim_der}}}\\
& = \frac{\meas \tilde{Q}_m}{\ell_m} \sgn(w_{m} - w_{m+1}) w_m - \frac{\meas \tilde{Q}_{m+1}}{\ell_{m+1}}
\sgn(w_{m} - w_{m+1}) w_{m+1} &&{\scriptstyle{w|_{\tilde{Q}_{m}}\equiv w_m}}.
\end{aligned}
\end{equation*}
Since the $\tilde{Q}_m$ are hyperrectangles separated by the interfaces $E_m$ along the first
coordinate, we have 
\[ \Ha^{n-1}(E_m) = \frac{\meas \tilde{Q}_m}{\ell_m} = \frac{\meas \tilde{Q}_{m+1}}{\ell_{m+1}} \]
for all $m \in \{1,\ldots,N_{1} - 1\}$. In turn, we obtain
\[ \int_{\Omega} \partial_1 \psi^{m} w = \Ha^{n-1}(E_m) \sgn(w_m - w_{m+1})(w_m - w_{m+1}) = \Ha^{n-1}(E_m)|w_m - w_{m+1}|.
\]
Since this argument holds for all $m\in\{1,\ldots,N_1-1\}$, we obtain
\[\int_{\Omega} \partial \phi_1^{k_2,\ldots,k_n} w =  \sum_{m = 1}^{N_1-1} \Ha^{n-1}(E_m)|w_{m} - w_{m+1}|. \]
The right-hand side accumulates the differences of the level sets of $w$ between neighboring hyperrectangles
$\tilde{Q}_{m}$ times the length of the interface between neighboring hyperrectangles along the first coordinate.

We extend the right-hand side to a sum between every pair $m_1$, $m_2 \in \{1,\ldots,N_1\}$ using productive zeros that
serve as jump heights between the non-neighboring hyperrectangles. Because the double sum counts each jump of $w$ along
the first coordinate twice, we correct it by the factor $1/2$ and obtain
\[
\int_{\Omega} \partial \phi_1^{k_2,\ldots,k_n} w 
= \frac{1}{2} \sum_{m_1=1}^{N_1} \sum_{m_2=1}^{N_1} \Ha^{n-1}(E_{\min\{m_1,m_2\}})
	|w_{\tilde{Q}_{m_1}} - w_{\tilde{Q}_{m_2}}|
	a^1(\tilde{Q}_{m_1},\tilde{Q}_{m_2})
\]
where $a^i(\tilde{Q}^\alpha,\tilde{Q}^\beta) = 1$ if $\tilde{Q}^\alpha, \tilde{Q}^\beta \in \tilde{\calQ}_{\tau}$ are adjacent along the $i$-th coordinate and $a^i(\tilde{Q}^\alpha,\tilde{Q}^\beta) = 0$ else.
\begin{multline*}
\int_{\Omega} \partial_1\phi_1 w = 
\sum_{k_2=1}^{N_2} \cdots \sum_{k_n=1}^{N_n}
\int_{\Omega} \partial \phi_1^{k_2,\ldots,k_n} w 
= \\ \frac{1}{2}
               \sum_{m_1=1}^{N_1} \sum_{m_2=1}^{N_1}  \sum_{k_2=1}^{N_2} \cdots \sum_{k_n=1}^{N_n}
               \Ha^{n-1}(E_{\min\{m_1,m_2\}})|w_{\tilde{Q}^{m_1,k_2,\ldots,k_n}}-  w_{\tilde{Q}^{m_2,k_2,\ldots,k_n}}|
	a^1(\tilde{Q}^{m_1,k_2,\ldots,k_n},\tilde{Q}^{m_2,k_2,\ldots,k_n}).
\end{multline*}
Extending this sum by further zeros and using that each $E_m$ is the intersection of the closures
of the two incident cubes along the first coordinate except for a $\Ha^{n-1}$-negligible set, we obtain
\[ \int_{\Omega} \partial_1\phi_1 w =
\frac{1}{2}\sum_{\tilde{Q}^i \in \calQ}\sum_{\tilde{Q}^j \in \calQ}
\Ha^{n-1}\big(\overline{\tilde{Q}^i}\cap\overline{\tilde{Q}^j}\big)|w_{\tilde{Q}^i} - w_{\tilde{Q}^j}|
a^1(\tilde{Q}^i,\tilde{Q}^j)
\]
and analogously for $\partial_2\phi_2,\ldots,\partial_n\phi_n$. Since at most one of
$a^1(\tilde{Q}^i,\tilde{Q}^j),\ldots,a^n(\tilde{Q}^i,\tilde{Q}^j)$ is non-zero for all
$\tilde{Q}^i,\tilde{Q}^j \in \calQ$ and in case all of them are zero we have
$\Ha^{n-1}(\overline{\tilde{Q}^i}\cap\overline{\tilde{Q}^j}) = 0$, we obtain
by summing from $1$ to $n$ that
\[ \int_{\Omega} \div \phi w =
\frac{1}{2}\sum_{\tilde{Q}^i \in \calQ}\sum_{\tilde{Q}^j \in \calQ}
\Ha^{n-1}\big(\overline{\tilde{Q}^i}\cap\overline{\tilde{Q}^j}\big)|w_{\tilde{Q}^i} - w_{\tilde{Q}^j}|
= \TV(w),
\]
where the second identity follows, for example, from Lemma 2.1 in \cite{manns2023on}\footnote{Lemma 2.1 in
\cite{manns2023on} assumes integer-valued functions. However, its proof requires only that the functions
are piecewise constant.}
\end{proof}

\begin{remark}
While we are not aware of quadrilateral Raviart--Thomas finite-element theory or applications thereof for
$n \ge 4$, our provided argument is generic and translates to higher dimensions if quadrilateral Raviart--Thomas
finite-element spaces are defined analogously to $n \le 3$.
\end{remark}

\begin{remark}
We believe that the results can be transferred to discretizations of bounded Lipschitz domains if, for example, $\phi$ is set to zero on quadrilaterals that are cut by
$\partial \Omega$ within the supremization 
in \eqref{eq:dfn_tv_tau}.
\end{remark}

As a corollary, we close an open question from
\cite{schiemann2025discretization} what an
asymptotically sharp (smallest possible)
constant for relationship between $\TV(w_\tau)$
and $\TV(w)$ is if $w_\tau$, $w$ are integer-valued and
the $w_\tau$ defined on rectangular partitions.

\begin{corollary}[Tight variant of Theorem 3.12 and Theorem 3.13 in \cite{schiemann2025discretization} for $n \in \{1,2,3\}$]
Let $w \in \BV(\Omega) \cap \{ w \in L^1(\Omega) : w(x) \in \{w_1,\ldots,w_M\} \text{ for a.e.\ } x\in\Omega \}$, $w_i \in \mathbb{Z}$
for all $i \in \{1,\ldots,M\}$, $M \in \N$.
Then, there exists a sequence $(w_\tau)_\tau \subset \BV(\Omega) \cap \{ w \in L^1(\Omega) : w(x) \in \{w_1,\ldots,w_M\} \}$ such that
\[ \limsup_{\tau\searrow 0} \TV(w_\tau) \le \sqrt{n}\TV(w). \]
\end{corollary}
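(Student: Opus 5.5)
The plan is to exploit that, on the axis-aligned $\tau$-grids of \cref{ass:tau_grids_for_tv}, every jump facet of a grid function has an axis-aligned normal. For such functions the isotropic total variation therefore coincides with the anisotropic ($\ell^1$-type) total variation $\TV_1(v) \coloneqq |Dv|_1(\Omega)$, where $|\cdot|_1$ is the $\ell^1$-norm applied to the polar vector of $Dv$. This is exactly the facet-sum representation of $\TV(w)$ that closes the proof of \cref{lem:sqrtd_estimate_for_dg0}, which I use in its general form via Lemma 2.1 in \cite{manns2023on}. Since $|\nu|_1 \le \sqrt{n}|\nu|_2$ for every unit normal $\nu$, we have $\TV_1 \le \sqrt{n}\TV$ on $\BV(\Omega)$. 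The claim therefore reduces to constructing integer-valued grid functions $w_\tau \in \{w_1,\ldots,w_M\}$ a.e.\ with
\[ \limsup_{\tau\searrow 0}\TV_1(w_\tau) \le \TV_1(w). \]
The factor $\sqrt{n}$ then enters only through $\TV_1(w) \le \sqrt{n}\TV(w)$. This explains why $\sqrt{n}$ is the natural sharp constant: it is attained by interfaces whose normal is $(1,\ldots,1)/\sqrt{n}$.

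\textbf{Step 1 (regularization).} I would first reduce to $w$ whose level sets $\{w = w_i\}$ have polyhedral (or smooth) boundaries meeting $\partial\Omega$ transversally. The tool is the density of such partitions in the space of Caccioppoli partitions in the strict sense, i.e.\ with convergence of the perimeters of each phase; this is the same density result used for Theorems 3.12 and 3.13 in \cite{schiemann2025discretization}. Because $\TV_1$ is also a continuous function of $Dw$ along such approximations (Reshetnyak continuity for the $1$-homogeneous integrand $|\cdot|_1$), a diagonal argument reduces the claim to this regular class.

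\textbf{Step 2 (discretization and counting).} For regular $w$, I would define $w_\tau$ on each cell $\tilde{Q}^j_\tau$ as the value of $w$ at the cell center, or as the majority value on the cell; both choices keep the values in $\{w_1,\ldots,w_M\}$. Then $w_\tau \to w$ in $L^1$, since only cells within distance $O(\tau)$ of the interfaces are misassigned. To bound $\TV_1(w_\tau)$, I would use the facet formula and split it over the coordinate directions. The jump facets orthogonal to $e_i$ are counted by slicing along the $e_i$-lines: on each line, a jump of $w_\tau$ occurs only near a crossing of an interface of $w$. Summing $\Ha^{n-1}$ of these facets yields a Riemann sum for $\int_{J_w}|\nu_i|\,|w^+ - w^-|\,d\Ha^{n-1}$, with an error of $o(1)$ coming from cells near the lower-dimensional edges of the polyhedral interfaces and near $\partial\Omega$. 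Bounded eccentricity of the grids is needed exactly here, to control those error cells. Summing over $i$ gives $\limsup \TV_1(w_\tau) \le \TV_1(w)$.

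\textbf{Main obstacle.} I expect the main difficulty to be the counting estimate in Step 2, specifically near junctions where three or more phases meet and where interfaces meet $\partial\Omega$. There, spurious facets could appear, each carrying a jump height of up to $\max_i w_i - \min_i w_i$. I would first try to show that the number of affected cells is $O(\tau^{2-n})$ while each facet has area $O(\tau^{n-1})$, so their total contribution is $O(\tau) \to 0$. If this fails for majority rounding, I would switch to the cell-center sampling, for which the slice-by-slice argument is cleanest.
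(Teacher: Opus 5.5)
Your route is correct in outline but genuinely different from the paper's. The paper does not build a recovery sequence itself. It takes $w_\tau\coloneqq R^{\{w_1,\ldots,w_M\}}_{\sigma_\tau}(w)$, the rounding operator of \cite{schiemann2025discretization}, evaluated on a finer $\sigma_\tau$-grid nested in the $\tau$-grid with $\sigma_\tau/\tau\searrow 0$, and notes that $w_\tau$ is constant on $\tau$-cells. It then chains \cref{lem:sqrtd_estimate_for_dg0}, $\TV(w_\tau)\le\sqrt{n}\TVt(w_\tau)$, with the limsup inequality $\limsup_\tau\TVt(w_\tau)\le\TV(w)$ from Theorem 3.13 there. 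You bypass the Raviart--Thomas functional entirely and prove a recovery statement for the anisotropic $\TV_1$ from scratch.

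The underlying mechanism is the same in both. The field $\phi$ built in the proof of \cref{lem:sqrtd_estimate_for_dg0} has every component in $[-1,1]$, so it is a dual certificate for $\TV_1$ on grid functions. The $\sqrt{n}$ there comes from $|\phi|_2\le\sqrt{n}|\phi|_\infty$, which is the dual form of your $|\nu|_1\le\sqrt{n}|\nu|_2$.

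What the paper's version buys is brevity and a direct link to the discretization $\TVt$ actually used in \eqref{eq : mcchh} and in the compactness argument. What yours buys is independence from Theorem 3.13 and its auxiliary third grid, plus a proof of the sharpness that the paper only asserts. Since $\TV_1$ is $L^1$-lower semicontinuous and coincides with $\TV$ on $\tau$-grid functions, any grid-constant $w_\tau\to w$ satisfies $\liminf_\tau\TV(w_\tau)\ge\TV_1(w)$. This equals $\sqrt{n}\TV(w)$ for a planar interface with normal $(1,\ldots,1)/\sqrt{n}$.

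Three remarks to tighten your write-up.

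\textbf{Strict convergence in Step 1.} Reshetnyak continuity needs $|Dw^k|(\Omega)\to|Dw|(\Omega)$, not merely convergence of each phase's perimeter, but this follows. With $\Gamma_{ij}\coloneqq\partial^*E_i\cap\partial^*E_j\cap\Omega$ one has $2\Ha^{n-1}(\Gamma_{ij})=P(E_i)+P(E_j)-P(E_i\cup E_j)$. Lower semicontinuity of $P(E_i\cup E_j)$ then gives $\limsup_k\Ha^{n-1}(\Gamma^k_{ij})\le\Ha^{n-1}(\Gamma_{ij})$ for every pair, while the sum over pairs converges, so every interface converges. The density theorem you need here is the polyhedral-partition result of Braides, Conti and Garroni.

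\textbf{Your anticipated obstacle.} It disappears if you use cell-center sampling from the start. Along each tower of cells in direction $e_i$, the discrete variation of the samples is at most the variation $V_i(x')$ of $t\mapsto w(x'+te_i)$ on the line through the centers, since phases skipped at junctions are absorbed by the triangle inequality. Hence the $e_i$-part of $\TV_1(w_\tau)$ is bounded by a Riemann sum of $V_i$, whose integral is $|D_iw|(\Omega)$ by slicing. For polyhedral $w$, $V_i$ is bounded and continuous off finitely many $(n-2)$-dimensional sets, and the box with its tensor grid makes $\partial\Omega$ harmless.

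\textbf{The literal statement.} As written, the corollary is trivial: take $w_\tau\coloneqq w$. You, like the paper, are proving the intended version with $w_\tau$ constant on $\tau$-cells and $w_\tau\to w$ in $L^1$.
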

\begin{proof}
The sequence $(w_\tau)_\tau$ is defined as $w_\tau \coloneqq R^{\{w_1,\ldots,w_M\}}_{\sigma_\tau}(w)$, where the operator
$R$ is defined as in Definition 2.11 in \cite{schiemann2025discretization}, the $h$-grid in \cite{schiemann2025discretization}
corresponds to our $\tau$-grids, a (third) sequence of $\sigma_\tau$-grids (corresponding to $\tau_h$-grids in
\cite{schiemann2025discretization}) is embedded into the $\tau$-grids so that Assumption 2.3 in \cite{schiemann2025discretization}
is satisfied, and we have $\sigma_\tau \tau^{-1} \searrow 0$ as $\tau\searrow 0$. The key observation is that the function $w_\tau$
is piecewise constant per cell of the $\tau$-grid by definition so that the assumption of \Cref{lem:sqrtd_estimate_for_dg0}
is satisfied and we obtain, incombination with the second $\limsup$-inequality in Theorem 3.13 in \cite{schiemann2025discretization}, that
\[ \limsup_{\tau\searrow 0} \TV(w_\tau)
   \le \limsup_{\tau\searrow 0} \sqrt{n}\TVt(w_\tau)
   \le \sqrt{n}\TV(w).
\]
\end{proof}

\begin{theorem}
$\sup_{\tau\searrow 0} \TV_\tau(w_\tau) \le C$ 
and $\sup_{\tau\searrow 0} \|w_\tau\|_{L^1(\Omega)} \le C$ for some $C > 0$ with $w_\tau = P_{\tau} w_\tau$ implies
that there exist a subsequence $\{w_\tau\}_{\tau}$ and $w \in \BV(\Omega)$ such that
$w_\tau \to w$ in $L^1(\Omega)$
\end{theorem}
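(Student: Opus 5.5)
The plan is to reduce the claim to the classical compactness theorem in $\BV(\Omega)$ by means of \Cref{lem:sqrtd_estimate_for_dg0}. Since $w_\tau = P_\tau w_\tau$, each $w_\tau$ is piecewise constant on the $\tau$-grid $\tilde{\calQ}_\tau$, i.e.\ $w_\tau = \sum_{j=1}^{N_\tau} \1_{\tilde{Q}^j_\tau} w_{\tau,j}$. Under \cref{ass:tau_grids_for_tv}, \Cref{lem:sqrtd_estimate_for_dg0} applies for every $\tau$ and yields
\[
\TV(w_\tau) \le \sqrt{n}\,\TVt(w_\tau) \le \sqrt{n}\, C .
\]
Together with $\|w_\tau\|_{L^1(\Omega)} \le C$, this gives a uniform bound $\|w_\tau\|_{\BV(\Omega)} = \|w_\tau\|_{L^1(\Omega)} + \TV(w_\tau) \le (1+\sqrt{n})C$. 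Note that $\TV(w_\tau) < \infty$ holds automatically, since a piecewise constant function on finitely many hyperrectangles lies in $\BV(\Omega)$.

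In the second step I will invoke the compact embedding $\BV(\Omega) \hookrightarrow L^1(\Omega)$ for bounded Lipschitz domains; see, e.g., Theorem 3.23 in \cite{ambrosio2000}. The hyperrectangle $\Omega = (a_1,b_1)\times\cdots\times(a_n,b_n)$ is such a domain. This gives a subsequence $\{w_{\tau_k}\}_k$ and some $w \in L^1(\Omega)$ with $w_{\tau_k} \to w$ in $L^1(\Omega)$.

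The final step is to show $w \in \BV(\Omega)$. This follows from the lower semicontinuity of $\TV$ with respect to $L^1$-convergence (Remark 3.5 in \cite{ambrosio2000}):
\[
\TV(w) \le \liminf_{k} \TV(w_{\tau_k}) \le \sqrt{n}\,C < \infty .
\]

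The only nontrivial ingredient is the $\tau$-uniform comparison between $\TV$ and $\TVt$ on piecewise constant functions. This is exactly \Cref{lem:sqrtd_estimate_for_dg0}, so once that lemma is available the main obstacle is already resolved. What remains is to check that its hypotheses hold for every element of the sequence, in particular that the grids are axis-aligned so that the constant $\sqrt{n}$ does not depend on $\tau$.
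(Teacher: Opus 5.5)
Your proposal is correct and follows the paper's proof: \Cref{lem:sqrtd_estimate_for_dg0} gives the uniform bound $\TV(w_\tau) \le \sqrt{n}\,C$, and together with the $L^1$-bound, Theorem 3.23 in \cite{ambrosio2000} yields the convergent subsequence. Your separate lower-semicontinuity step showing $w \in \BV(\Omega)$ is harmless but not needed, because that theorem already places the limit in $\BV(\Omega)$ for bounded extension domains such as the hyperrectangle $\Omega$.
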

\begin{proof}
	\Cref{lem:sqrtd_estimate_for_dg0} implies that $\sup_{\tau\searrow 0} \TV(w_\tau) \le \sqrt{n}C$ holds.
	In combination with $\sup_{\tau\searrow 0} \|w_\tau\|_{L^1(\Omega)} \le C$, Theorem 3.23 from
	\cite{ambrosio2000} yields the claim.
\end{proof}

\section{Computational Experiments}\label{sec : comp_results}
We consider a model example from the existing literature (see \cite[\S5]{manns}) which reads
\begin{gather}\label{eq:ocpc}
    \begin{aligned}
\min_{u,w} \ &\frac{1}{2}\|u - u_d \|^2_{L^2(\Omega)} + \alpha \TV(w) \\
\text{s.t. } & 
    -\varepsilon \Delta u + c_1 \cdot \nabla u + c_2 u w = f \quad\text{in } \Omega \\
    &u= 0 \quad\text{on } \{0,1\} \times (0,1) \cup ((0,0.25) \cup (0.75,1)) \times \{1\} \\
    &u= \sin(2 \pi (x_1 - 0.25)) \quad\text{on } (0.25,0.75) \times \{1\} \\
    &\partial_n u = 0 \quad\text{on } (0,1) \times \{0\},
    \end{aligned}\tag{OCP$_C$}
    \end{gather}
    where $\alpha = 10^{-5}, \varepsilon = 0.04$, $c_2 = 4$, $c_1(x) = (\begin{matrix} \sin(\pi x_1)
    & \cos(2 \pi x_2)\end{matrix})^T$ for $x \in \Omega$, 
    $f(x) = \sin(2 \pi x_1 + 2 \pi x_2) + 3$ for $x \in \Omega$.
While it may be considered to be slightly more complicated than our model problems \eqref{eq : ocpe} and \eqref{eq : ocpc}, it allows for a direct comparison with the results from \cite[\S5]{manns}.
The verification of the assumptions for \eqref{eq:ocpc} can be established 
using the same strategies that we have used for \eqref{eq : ocpe} and
\eqref{eq : ocpc} or, alternatively, the non-standard elliptic regularity from 
\cite{groger1989aw} that gives $W^{1,p}$-regularity of the state for $p > 2$
and thus $L^\infty$-bounds since the domain is in 2D.

\paragraph{Key Insight for Acceleration of OBBT}

The key driver for both the quality / tightness and the compute time for
McCormick relaxations is the OBBT procedure. In \cite{manns}, the very expensive
OBBT procedure was applied with relatively coarse grids (coinciding for local
averaging and controls) with up to $48 \times 48$ 
square cells. In this work, we push this size further to grids of sizes
$128 \times 128$. In order to do so, we have integrated several acceleration 
techniques into the OBBT Algorithm 1 from \cite{manns}. We briefly note that,
in addition to the techniques described below, we have also assessed the
effect of replacing the state variable in the McCormick inequalities
and objective by the (discretized) solution operator of the PDE. This,
however, has led to a dense system of linear inequalities instead to
a sparse one, which has turned out to be much more compute-intensive
to solve. Using an outer approximation strategy that adds the inequalities only when
necessary, that is, violated, during the solution process has led to
the quick inclusion of a massive amount of inequalities and driven
up solution times even higher. We have therefore discontinued this approach.

Instead, our acceleration approach is based on the key observation that
when considering the minimizer / maximizer of the average of the state
variable in a specific cell, it is intuitive from the continuity properties
of the state variable that the same or a very similar state vector is the 
solution to the minimization / maximization for nearby grid cells.
Now, assuming a bound for a single cell was computed using primal
simplex to solve the corresponding LP of the form \eqref{eq : obbt_problems}, the simplex basis remains feasible when just the objective is changed,
which is the case if the constraints are not updated after each bound
optimization but only after the whole set of bounds were optimized one after 
another (one OBBT sweep). In addition, only four entries have to be changed
in the constraint set when the information of one bound update is added.

\paragraph{Experimental Procedure} \label{sec : experimental_procedure}
In our experiments, we follow the Ritz--Galerkin ansatz described in \cite[\S4.2]{manns} 
for the PDE in \eqref{eq:ocpc} and use the following discretization scheme
\begin{itemize}
\item for $u$: first-order Lagrange elements on $4 \times 128 \times 128$ triangles,
\item for $w$: piecewise constant functions on $128 \times 128 $ squares.
\end{itemize}
Regarding the total variation, we use the same discretization as in \cite{manns}, which
in turn is based on \cite{manns2025discrete}. While this discretization may give anisotropic
effects, this is not important towards our main goal---the investigation of acceleration techniques
for the OBBT Algorithm---but makes our setting comparable to \cite{manns}.
Since we aim to accelerate the OBBT Algorithm 1 from 
\cite{manns}, we conduct the computational experiments described in the
following sections. We conducted all of the experiments on a node of the Linux
HPC cluster LiDO3 with two AMD EPYC 7542 32-Core CPUs and 64 GB RAM. The computations were limited to 1 CPU.

\subsection{First Experiment for Solver Configuration Determination}
We use a fine square grid of $N_h$ for the McCormick relaxation, where we choose the mesh size / side length of the squares
as $\tau = h = 2^{-7}$ so that $N_h = 2^7 \times 2^7 = 2^{14}$. We run one OBBT
Algorithm sweep using two different solvers for the 
LPs  \eqref{eq : obbt_problems}, three different warm start variants,
and two cell ordering variants. Regarding the solvers, we use \texttt{Gurobi} \cite{gurobi} and \texttt{HiGHS} \cite{highs}. As warm start variants we compare
\begin{itemize}
\item \texttt{cold}\quad We solve each iteration with an interior point method and start each OBBT sweep with updated bounds from the
preceding sweep. No LP warm starting is used.
\item \texttt{semi-warm}\quad We solve the initial LP using an interior point method and then use primal simplex without updating the constraints.
We also start each OBBT sweep with updated bounds from the preceding sweep. Since we only update the objective between subsequent iterations of
the same OBBT sweep and use the optimal simplex basis from preceding iteration, the basis remains feasible for primal simplex.
\item \texttt{warm}\quad We solve the initial LP using an interior point method and then use dual simplex for subsequent iterations.
The constraint set is updated with the computed bound after each LP solve.
Here, we first optimize all of the lower bounds and then all of the upper bounds. As a consequence, we can expect that the bound improvement of the upper bounds is more pronounced than for the lower bounds, since more information is available in the warm start mode. 
\end{itemize}
In addition, we compare two different ways of ordering the grid cells, namely 
\begin{itemize}
\item \texttt{diagonal}\quad a bottom-up traversal of the grid cells using a typical ordering of grid cells for PDE computations (the default
of dolfinx \cite{DOLFINx} that is used in our computations),
\item \texttt{snake}\quad a row-wise bottom-up traversal of the grid cells with high spatial locality of the grid cells in the resulting serialization;
\end{itemize}
see \cref{fig:traversals} for a visualization of the two traversal variants.
\begin{figure}[ht]
	\centering
	\begin{subfigure}{0.48\textwidth}
		\centering
		\includegraphics[width=\linewidth]{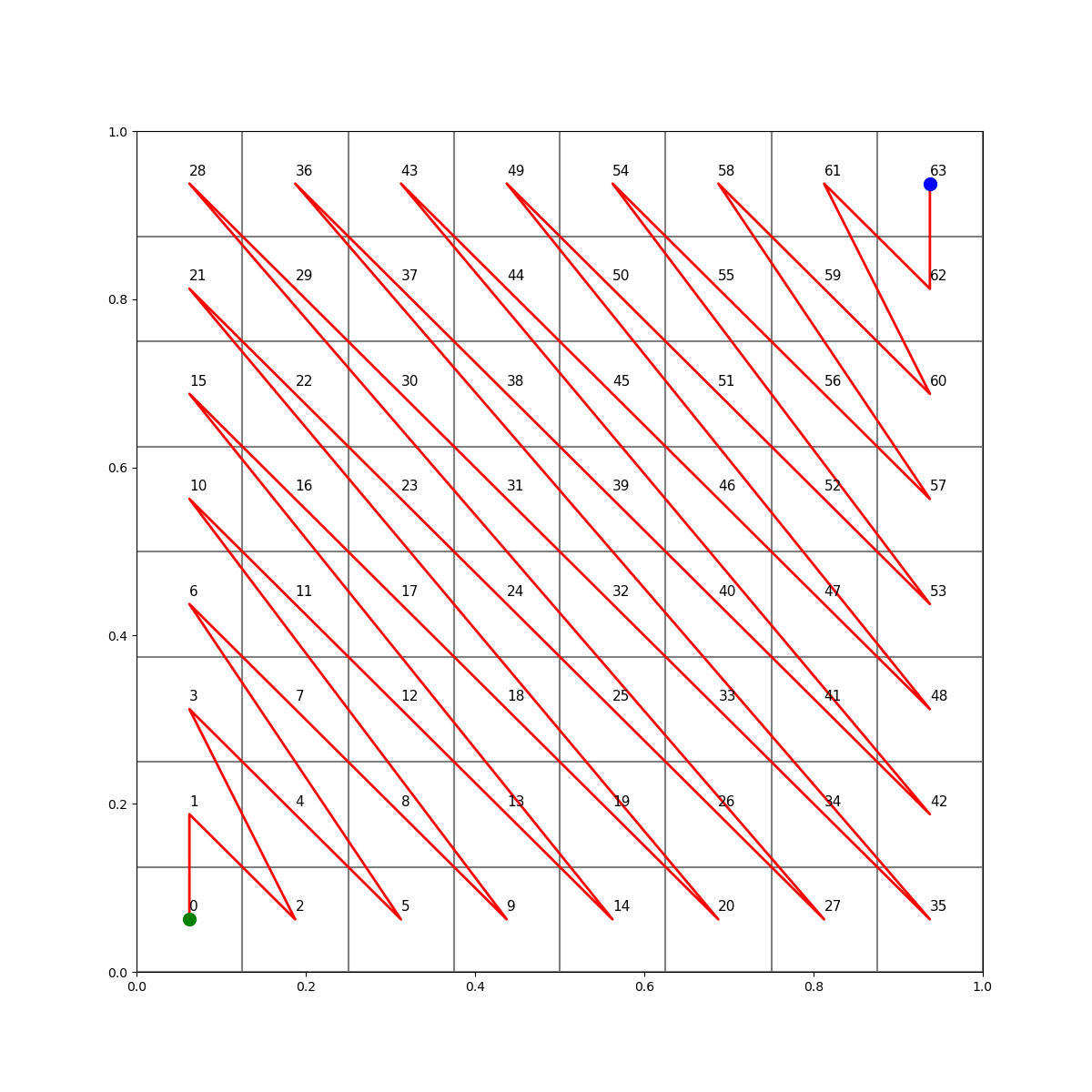}
		\caption{Diagonal up-bottom traversal}
		\label{fig:bild1}
	\end{subfigure}
	\hfill
	\begin{subfigure}{0.48\textwidth}
		\centering
		\includegraphics[width=\linewidth]{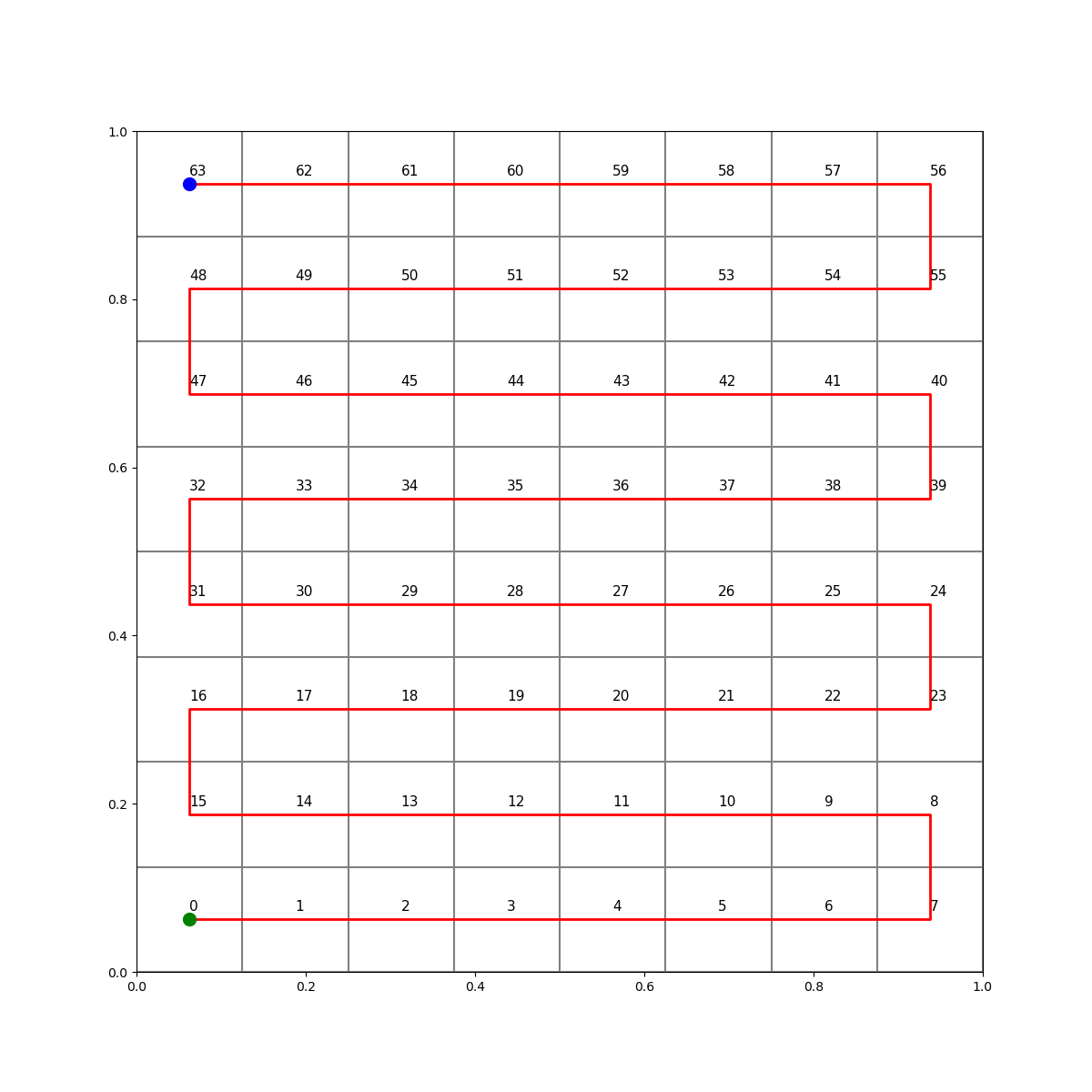}
		\caption{Row-wise snake traversal}
		\label{fig:bild2}
	\end{subfigure}
	\caption{Example of the assessed grid cell traversals on an $8 \times 8$ square grid.}
	\label{fig:traversals}
\end{figure}
Regarding initialization, we set all upper bounds to $10^3$ and all lower bounds to $-10^3$.

\paragraph{Results}
The runs for the configurations \texttt{HiGHS} with \texttt{cold}, \texttt{snake} 
and \texttt{Gurobi} with \texttt{cold}, \texttt{snake} were canceled after 48 hours
as just around 10\,\% and 3\,\% of the LP solves for the sweep were completed
at this time. 
Since the LP solves for the choices \texttt{snake} and \texttt{diagonal}
do not differ for \texttt{cold} (only their order differs), we omitted
running \texttt{HiGHS} with \texttt{cold}, \texttt{diagonal}
and \texttt{Gurobi} with \texttt{cold}, \texttt{diagonal}.
The runs for the configurations \texttt{Gurobi} with \texttt{warm}, 
\texttt{diagonal} and \texttt{Gurobi} with \texttt{warm}, \texttt{snake}
finished approximately within 12 and 8 hours so that the \texttt{snake}
traversal option was able to reduce the running time by about one third
in this case. The runs for the configurations \texttt{Gurobi} with
\texttt{semi-warm}, \texttt{diagonal} and \texttt{Gurobi} with \texttt{warm}, 
\texttt{diagonal} had comparable running times.
The runs for the configurations \texttt{HiGHS} with \texttt{warm}, 
\texttt{snake} were canceled after 48 hours as just around 25\,\%
of the LP solves for the sweep were completed as this time and thus
the performance was clearly inferior to \texttt{Gurobi} with \texttt{warm}, 
\texttt{diagonal} and \texttt{Gurobi} with \texttt{warm}, \texttt{snake}
already. Since \texttt{snake} did reliably outperform \texttt{diagonal},
the configuration \texttt{HiGHS} with \texttt{warm}, \texttt{diagonal}
was not run.
The runs for the configurations \texttt{HiGHS} with \texttt{semi-warm}, 
\texttt{diagonal} and \texttt{HiGHS} with \texttt{semi-warm}, \texttt{snake}
finished approximately within 2.33 and 2.18 hours so that the \texttt{snake}
traversal option was able to reduce the running time by about 6\,\%
in this case.
The recorded running times are tabulated in \cref{tab:pretest1} below.

Because the constraint set is updated after every LP solve, the bounds obtained
with one sweep in a \texttt{warm} configuration must be superior to the bounds 
computed with a \texttt{semi-warm} configuration when all other settings are
identical. As a consequence \texttt{semi-warm} does not seem to be a promising 
approach when used in combination with \texttt{Gurobi}. Indeed, the bounds
obtained with a \texttt{warm} sweep are much tighter (41\,\% on average 
after this one sweep) as one can observe in  \cref{fig:comparison} further down below. By contrast, the 4-5 times lower running 
times of the configuration \texttt{HiGHS} with \texttt{semi-warm} allow for a
much higher number of sweeps to be executed within the same time period
so that it is not immediate if \texttt{Gurobi} with \texttt{warm}
or \texttt{HiGHS} with \texttt{semi-warm} is preferable.

\begin{table}[ht]
	\centering
	\caption{Running times (in seconds) for one sweep of the OBBT Algorithm
	using different warm start configurations of Gurobi and HiGHS and different
	grid traversals on an $128 \times 128$ grid.}
	\label{tab:pretest1}	
	\begin{threeparttable}[t]
		\centering
		\begin{tabular}{r|cccccc}
			\toprule
			 & \multicolumn{3}{c}{\texttt{Gurobi}} & \multicolumn{3}{c}{\texttt{HiGHS}} \\
			\cmidrule(r){2-4} \cmidrule(l){5-7}
			& \texttt{cold} & \texttt{semi-warm} & \texttt{warm} 
			& \texttt{cold} & \texttt{semi-warm} & \texttt{warm} \\
			\midrule
			snake 
			& $^{*}$ 
			& $2.85 \times 10^4$ 
			& $2.90\times 10^4$ 
			& $^{**}$ 
			& $7.88 \times 10^3$ 
			& $^{***}$ \\
			diagonal 
			& 
			& $4.47 \times 10^4$ 
			& $4.33 \times 10^4$  
			& 
			& $8.42 \times 10^3$ &  \\
			\bottomrule 
			 \end{tabular}
		\begin{tablenotes}
		\item[$^{*}$] canceled after $1.73 \times 10^5$\,s;
		3\,121 of 32\,768 LPs solved.
		\item[$^{**}$] canceled after $1.73\times 10^5$\,s; 194 of 32\,768 LPs
		solved.
		\item[$^{***}$] canceled after $1.73\times 10^5$\,s; 8\,304 of 32\,768 LPs solved.
		\end{tablenotes}
\end{threeparttable}
\end{table}

\paragraph{Outcome} Due to their performance in this experiment, we choose \texttt{HiGHS} with \texttt{semi-warm},
\texttt{snake} and \texttt{Gurobi} with \texttt{warm}, \texttt{snake} as the configurations for the next experiment.

\subsection{Second Experiment for Solver Configuration Determination}
We run the OBBT Algorithm 1 in \cite{manns} for 72 hours or until convergence for both \texttt{HiGHS} with \texttt{semi-warm}, \texttt{snake} and
\texttt{Gurobi} with \texttt{warm}, \texttt{snake} on the same grid. We use the same initialization as before with all upper bounds set to $10^3$
and all lower bounds set to $-10^3$. In order to avoid the numerical problems described in \cite[\S4.3]{manns}, we add an offset of $10^{-4}$ to
updates of upper bounds and subtract the same offset to updates of lower bounds. We determine OBBT Algorithm 1 as (numerically) converged
once the maximum difference to the previous bound update is below $10^{-2}$.

In addition, \texttt{HiGHS} required additional numerical stabilization to be able to reliably solve the first iteration of a sweep using
the interior point method. In particular, we did not improve corresponding upper and lower bounds $u_\ell^{\hat{i}}$ or $u_u^{\hat{i}}$
further if their difference $u_u^{\hat{i}} - u_\ell^{\hat{i}}$ fell below the termination tolerance of $10^{-2}$.
In addition, we did not accept the bound update when its absolute value fell below $10^{-4}$.

\paragraph{Results}
Within the prescribed time limit of 72 hours, \texttt{Gurobi} with \texttt{warm},
\texttt{snake} was able to complete four sweeps which took almost 61 hours and the time limit was reached during the fifth. In particular, 
the convergence criterion was not satisfied at this point.
By contrast, \texttt{HiGHS} with \texttt{semi-warm}, \texttt{snake} 
did converge in 13 sweeps just short of 23 hours.
For a cell in the center of the domain, the increase / decrease of
the bounds over time is visualized in \cref{fig:bound_over_sweeps}.
\begin{figure}[ht]
\centering
\begin{subfigure}{0.48\textwidth}
\centering
\includegraphics[width=\linewidth]{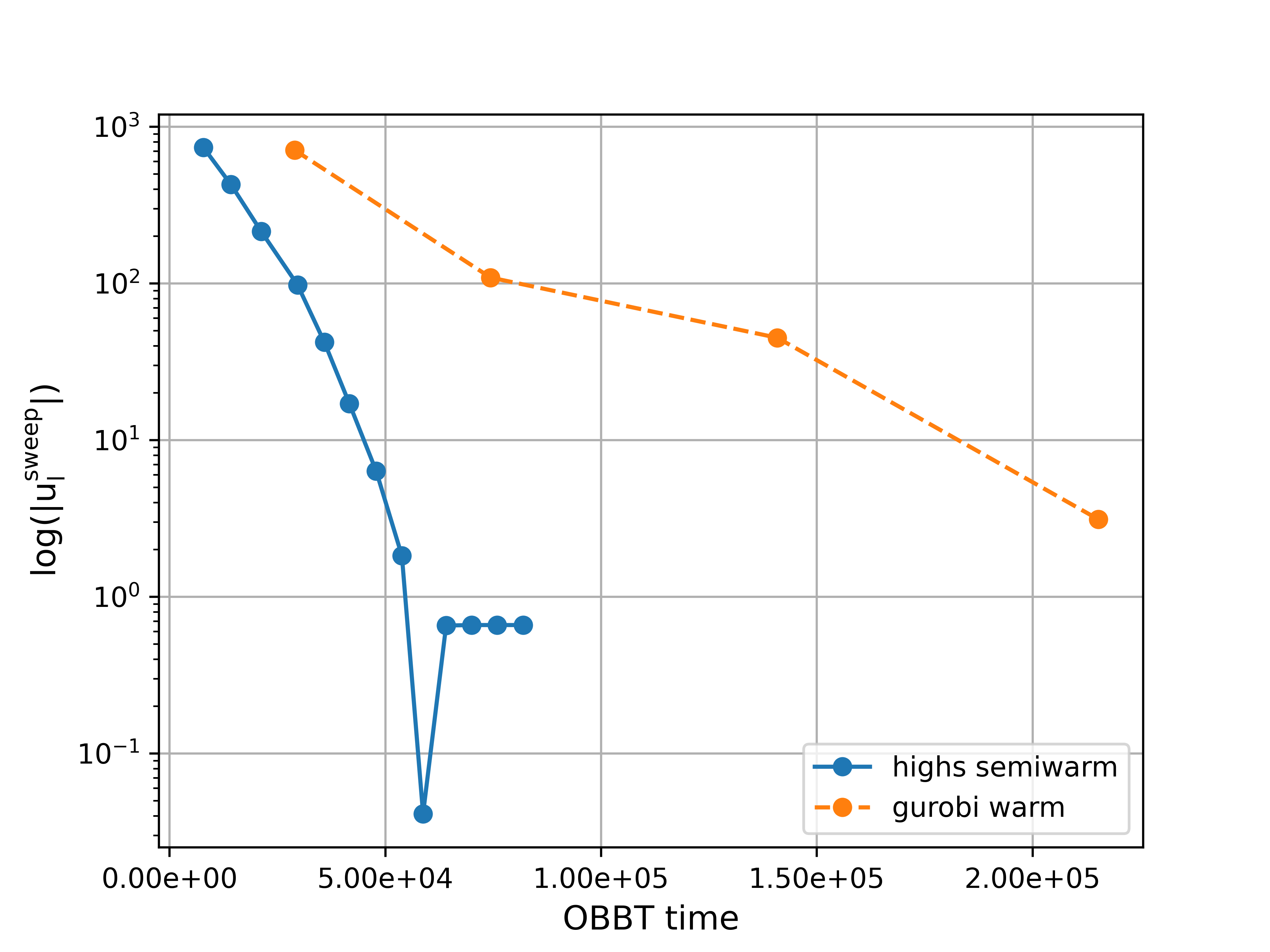}
\caption{Log-scaled value of $|u_\ell^{8319}|$ over time.\\
Note that $u^{8319}_\ell$ flips its sign in sweep 9.}
\label{fig:ul_over_sweeps}
\end{subfigure}
\hfill  
\begin{subfigure}{0.48\textwidth}
\centering
\includegraphics[width=\linewidth]{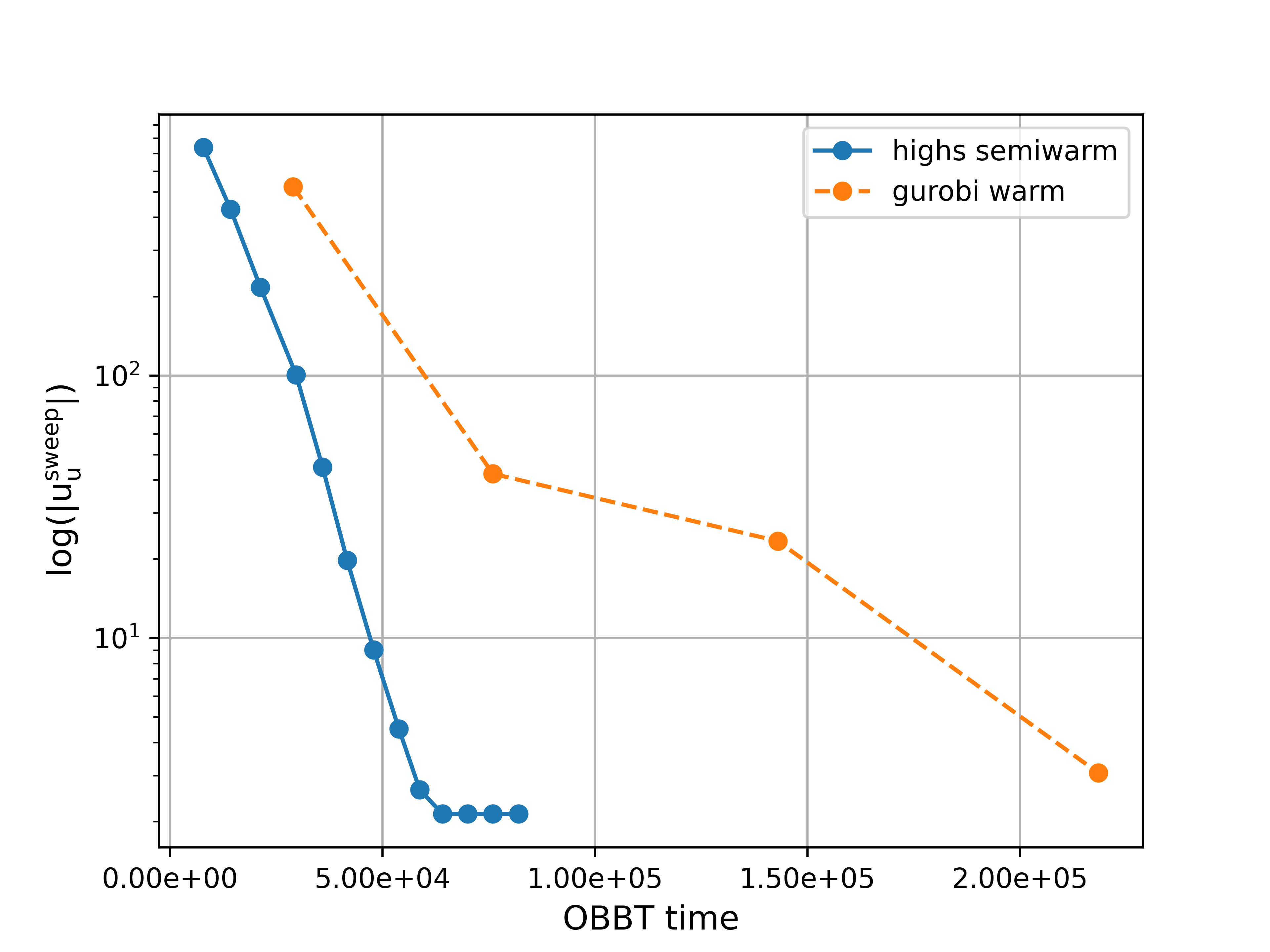}
\caption{Log-scaled value of $|u_u^{8319}|$ over time.\\~}
\label{fig:uu_over_sweeps}
\end{subfigure}
\caption{Lower (left) and upper bound (right) bound for grid cell 8319
over the course of the OBBT Algorithm running time (in seconds) for a square $128 \times 128$ mesh of the size using \texttt{Gurobi} with \texttt{warm},
\texttt{snake} (orange) and \texttt{HiGHS} with \texttt{semi-warm},
\texttt{snake}, where the bullet points mark the time of a completed
sweep.}\label{fig:bound_over_sweeps}.
\end{figure}
All upper bounds after after one and four sweeps of \texttt{Gurobi} with \texttt{warm}, \texttt{snake} and after one and 13 sweeps of \texttt{HiGHS}
with \texttt{semi-warm}, \texttt{snake} are visualized in a heatmap in
\cref{fig:comparison}.
\begin{figure}[h]
	\centering
	\includegraphics[width=0.75\linewidth]{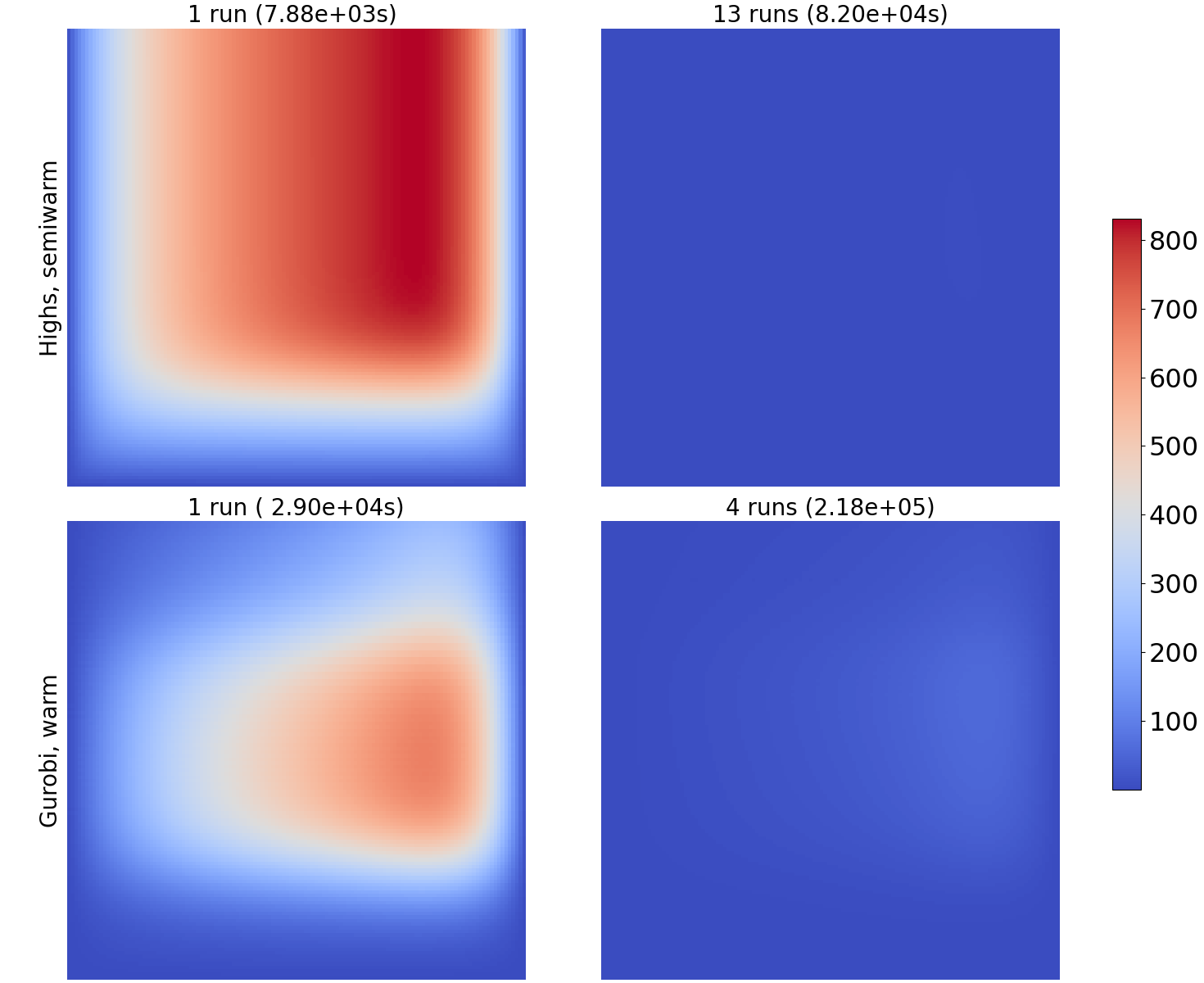}
	\caption{Value of $u_u^i$ on the mesh squares $Q_h^i$ after running 1 and 13 sweeps of \texttt{HiGHS} with \texttt{semi-warm}, \texttt{snake} (bottom row) and after running 1 and 4 sweeps of Gurobi with the warm start (top row).}
	\label{fig:comparison}
\end{figure}

\paragraph{Outcome} Due to the superior performance in this experiment, we choose \texttt{HiGHS} with \texttt{semi-warm},
\texttt{snake} as the configuration for the next experiment.

\subsection{Qualitative Assessment of the Bounds}
We execute the OBBT Algorithm using \texttt{HiGHS} with \texttt{semi-warm}, \texttt{snake} for $h = \tau$ and the choices $h = 2^{-2}$, $\ldots$, $2^{-7}$
until convergence, which yields variable bounds $u_\ell^i$, $u_u^i$. Then, we solve the convex problems \eqref{eq : mcchh} by means of \texttt{Gurobi}
using the sharpest variable bounds obtained with the final sweep OBBT Algorithm for all of these values of $h$, which thus yields approximate lower
bounds for \eqref{eq:ocpc}. We note that we have observed numerical problems and suboptimal terminations of \texttt{Gurobi} when solving \eqref{eq : mcchh},
in particular for low values of $h$ in the default setting. We were able to overcome these issues by setting the parameters $\operatorname{ScaleFlag} = 2$, $\operatorname{BarHomogeneous} = 1$, and $\operatorname{NumericFocus} = 2$. To compare the obtained approximate lower bounds to an upper bound, we compute an
upper bound for \eqref{eq:ocpc} by executing a local gradient-based NLP solver on it. Due to the
small gap between the upper bound computed with the NLP-solver and the lower bound computed on the baseline discretization in \cite{manns}, we expect
the solution to be close to a global minimum. However, our new ability to scale the the OBBT Algorithm to finer meshes comes at the cost that
we were not able to execute the OBBT Algorithm to convergence on a baseline discretization as in \cite{manns}. In this case, LPs with a substantial
computational demand were generated during the OBBT Algorithm, thereby making the overall algorithm execution intractable since thousands of LPs have to
be solved and we had to cancel it after 72 hours with zero completed sweeps. We have not figured out why this is the case yet and have observed
this before and after transferring our acceleration techniques to the baseline discretization. Due to the finer meshes we allow now and the fact that we
can run the OBBT Algorithm to convergence, we expect the solutions to \eqref{eq : mcchh} be close to true lower bounds for the smaller 
values of $h$.

\paragraph{Results}
We observe that the execution time of the OBBT Algorithm followed a monotonically increasing trend when $h$ is decreased, ranging
from $2.6558 \times 10^{3}$\,s for $h = 2^{-2}$ to $8.2032 \times 10^{4}$\,s for $h = 2^{-7}$. Notably, for all values of
$h$ except $h = 2^{-2}$, the OBBT Algorithm did converge after 13 sweeps, indicating a very similar bound tightening behavior that seems to be independent of the mesh size. For $h = 2^{-2}$, the OBBT Algorithm required only 12 sweeps. The approximate lower bounds on
\eqref{eq:ocpc} that are achieved by solving \eqref{eq : mcchh} afterwards also follow a monotonically decreasing
trend and seems to converge to (approximately) $3.2293$ with the relative difference between the approximate lower bounds for $h = 2^{-4}$,
$h = 2^{-5}$, $h = 2^{-6}$ and $3.2293$ (the value for $h = 2^{-7}$) being below or slightly above to $10^{-4}$, which is a typical tolerance for relative
duality gaps in many solvers. The compute times for solving \eqref{eq : mcchh} lay between $2.0920\times 10^{1}$\,s for $h = 2^{-5}$
and $5.2660\times 10^{1}$\,s for $h = 2^{-7}$. The execution of the NLP-based upper bound computation yielded an objective
value of $3.2294$, which is slightly above the finest approximate lower bound. It took $4.8000 \times 10^{1}$\,s to execute this computation.
For sake of completeness, we note that without executing the OBBT Algorithm, the initial variable bounds prescribed on \eqref{eq : mcchh}
yield an objective value that is numerically zero, which is a trivial bound on \eqref{eq : ocpc}. The objective values and compute times
are tabulated in \cref{tbl:overall_assessment_results}. Since one can also compute approximate lower bounds with intermediate results
of the OBBT Algorithm, we assess how the objective value of \eqref{eq : mcchh} evolves over the different sweeps of the OBBT Algorithm for $h = 2^{-2}, \ldots, 2^{-7}$.
 For all of our mesh sizes, the induced lower bound settles after approximately 9 sweeps and barely increases afterwards. We have visualized this in \cref{fig:approximate_bound_over_sweep}.
\begin{table}[h]
	\caption{Running times (in seconds) of the OBBT Algorithm for different mesh sizes as well as running times (in seconds) and achieved
	approximate lower bounds of the solution of \eqref{eq : mcchh} with the tightened variable bounds in comparison
	to the NLP-based upper bound and its compute time.}\label{tbl:overall_assessment_results}
	\centering
	\begin{adjustbox}{width=\textwidth}	
		\begin{tabular}{r|ccccccc}
			\toprule
			& \multicolumn{6}{c}{Approximate Lower Bound by Solving \eqref{eq : mcchh}} & NLP-based Upper Bound
			\\
			\cmidrule(lr){2-7} \cmidrule(lr){8-8}
			\\
			$h = \tau = $ & $2^{-2}$& $2^{-3}$ &$2^{-4}$ &  $2^{-5}$ & $2^{-6}$  & $2^{-7}$ & \\
			\midrule
			Time OBBT [s] & \num{2.6558e+03}   &\num{4.9871e+03} &\num{8.1295e+03} &\num{1.7320e+04} &\num{1.7873e+04}  &\num{8.2032e+04}
			\\
			Time for Solving [s] & \num{2.4910e+01}& \num{2.3800e+01}& \num{2.8250e+01}& \num{2.0920e+01}& \num{2.8910e+01}& \num{5.2660e+01} & \num{4.8000e+01}
			\\
			Objective value &\num{3.2389e+00}   &\num{3.2328e+00} &\num{3.2299e+00} &\num{3.2295e+00} &\num{3.2294e+00} &\num{3.2293e+00}
			& \num{3.2294} \\        
			\bottomrule
		\end{tabular}
	\end{adjustbox}
\end{table}

\begin{figure}[h]
	\centering
	\includegraphics[width=\linewidth]{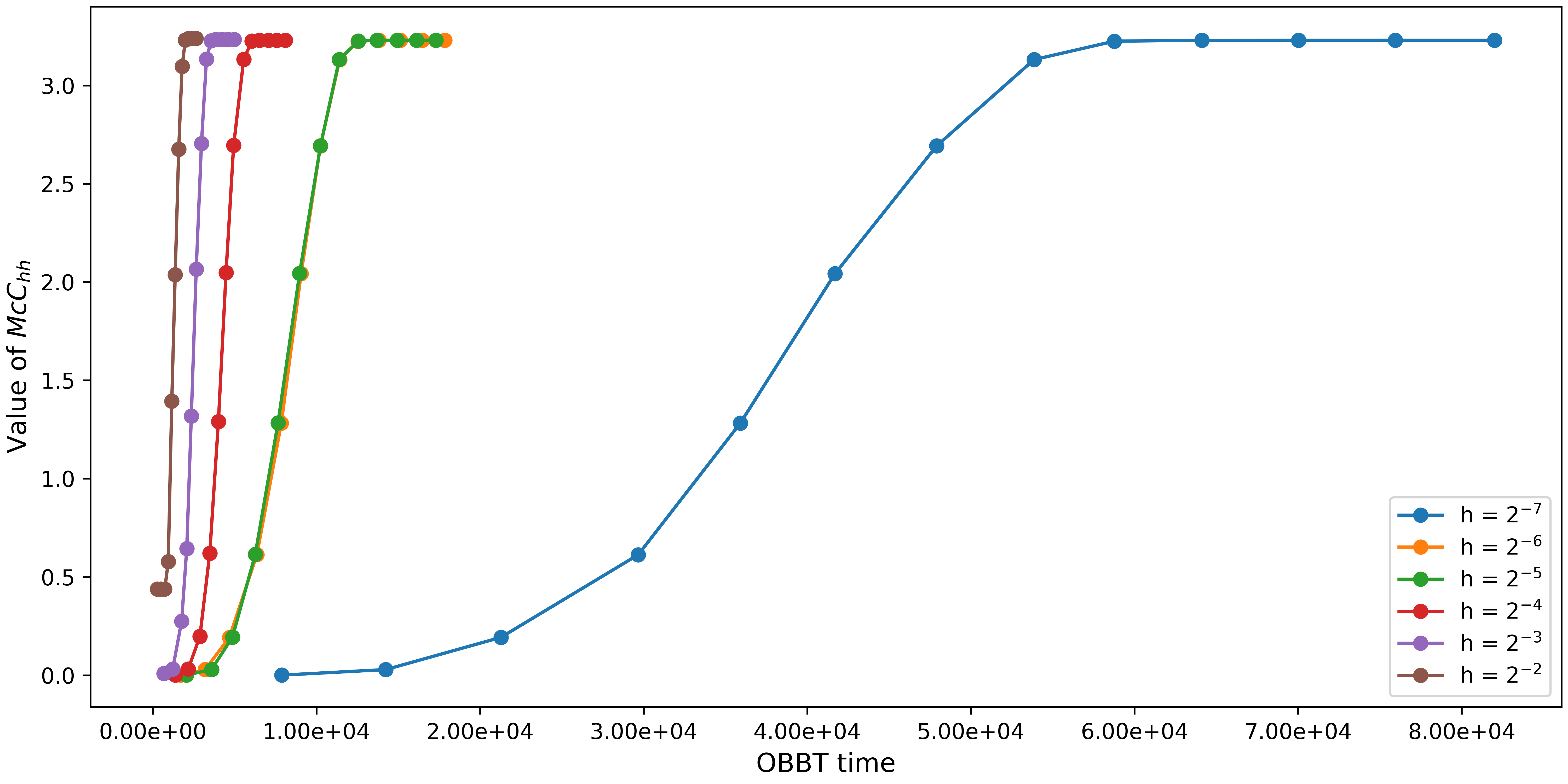}
\caption{Development of the value of $\eqref{eq : mcchh}$ over the sweeps of the OBBT Algorithm for different mesh sizes.}	\label{fig:approximate_bound_over_sweep}
\end{figure}

\paragraph{Interpretation}
The results show that the NLP-based upper bound is indeed (close to) a global minimum. In addition, the compute times for executing
an NLP solver on \eqref{eq:ocpc} or solving \eqref{eq : mcchh} after variable bounds are determined is negligible compared
to the bound tightening procedure. The bound tightening procedure on the other hand already gives high quality bounds even for
relatively large values of $h$. For our example, if one is satisfied with a relative accuracy of $2 \times 10^{-4}$ for the approximate
lower bound, it would have been possible to use the relatively coarse value of $h = 2^{-4}$ for \eqref{eq : mcchh}, thereby saving
an order of magnitude in compute time compared to $h = 2^{-7}$. 
The compute time can be also saved by posing a less strict termination criteria for the OBBT Algorithm since the later OBBT sweeps do not improve the resulting approximate lower bound by much. For our example, there is no numerical difference in the optimal value of \eqref{eq : mcchh} between the penultimate and the last sweeps for all the mesh sizes and the relative difference between the optimal value of \eqref{eq : mcchh} for is below or slightly above $6\times 10^{-5} $ for the sweeps $10$ to $13$ for all $h = 2^{-3}, \ldots, 2^{-7}$. Earlier
termination after 9 sweeps allows to save up to $19\%$ of the compute time for OBBT.

\section{Conclusion}\label{sec : concluson}
We have successfully demonstrated that the assumptions underlying the derivation of McCormick
relaxations and their approximations in \cite{manns} can be verified for PDEs, in particular
elliptic ones, on multi-dimensional domains.
On the computational side, we have been able to scale the approach to significantly finer meshes
underlying the approximate McCormick relaxations than what has been possible before. The driver
of the computational burden is the OBBT Algorithm, which we were able to accelerate significantly
by exploiting and combining the warm starting capabilities of the simplex algorithm and the
continuity properties of the PDE solutions.
Since the meshes we have used are still of academic size, further research is necessary
to scale the approach towards more realistic and in particular non-academic settings
and discretizations.

\bibliographystyle{plain}
\bibliography{references}

\appendix

\section{Appendix}
\begin{lemma} \label{thm : aux_estim_M}
    Let $(u,w)$ be feasible for \eqref{eq : ocpe}. Let the test function $\zeta_M$, the constant $M$, and the set $E(k)$ for $k >0$ be defined as in
    the proof of \cref{thm:ocpe_bound}, then the following estimation holds:
    \begin{equation*}
    \int_{A(k)}  {u}\zeta_M \leq M \int_{A(k)}  {u} {u}.
    \end{equation*}
\end{lemma}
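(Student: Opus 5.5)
We will prove the inequality by a pointwise comparison of the two integrands on the set $A(k)$, which we read as the level set $E(k)=\{x\in\Omega: |u(x)|\ge k\}$ from the proof of \cref{thm:ocpe_bound}. Integrating a pointwise inequality $u\zeta_M \le M u^2$ a.e.\ on $E(k)$ then gives the claim. The only properties of $\zeta_M$ we use are its definition $\zeta_M = M\sgn(u)\max(|u|-k,0)$ and the positivity $M>0$, which was observed in that proof under the standing assumption on $w_\ell$.

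\textbf{Step 1: a sign-free form of the integrand.} For a.e.\ $x$ we have $u\sgn(u)=|u|$, so
\[
u\zeta_M = M\,|u|\max(|u|-k,0).
\]
In particular $u\zeta_M\ge 0$ a.e. On the complement of $E(k)$ the maximum vanishes, so nothing is lost by restricting to $E(k)$.

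\textbf{Step 2: the pointwise bound on $E(k)$.} On $E(k)$ we have $\max(|u|-k,0)=|u|-k$. Since $k>0$ and $|u|-k\ge 0$ there, it follows that $0\le |u|-k\le |u|$. Multiplying by $M|u|\ge 0$ gives
\[
u\zeta_M = M|u|(|u|-k) \le M|u|^2 = M u^2 \quad\text{a.e.\ on } E(k).
\]

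\textbf{Step 3: integrability and conclusion.} Both integrands are integrable, because $u\in H_0^1(\Omega)\subset L^2(\Omega)$ and $|\zeta_M|\le M|u|$. Integrating the inequality from Step 2 over $E(k)$ yields $\int_{E(k)} u\zeta_M \le M\int_{E(k)} uu$, which is the claim.

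The main point needing care is the sign of $M$. If $M\le 0$ were allowed, the inequality would reverse, so we rely on the earlier observation that $M>0$. This in turn follows from the condition $w_\ell>-\beta/c_p^2$ imposed in \cref{thm:ocpe_bound}; the proof there writes this as $w_\ell>-\beta/c_p$, which is a typo for $c_p^2$. We also note that the definition of $\zeta_M$ in that proof contains a sign typo: the last case should read $u\le -k$. The sign-free formula in Step 1 sidesteps this case distinction entirely.
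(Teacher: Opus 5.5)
Your proof is correct and is essentially the paper's argument. The paper splits $A(k)=E(k)$ into $\{u\ge k\}$ and $\{u\le -k\}$, expands $u(u\mp k)$, and drops the nonpositive terms $-\int_{\{u\ge k\}} ku$ and $\int_{\{u\le -k\}} ku$; you instead merge both cases into the single pointwise bound $M|u|(|u|-k)\le M|u|^2$, relying (as the paper does tacitly when pulling out $M$) on $M>0$, and reading the last case of $\zeta_M$ as $u\le -k$, which is how the paper's proof uses it.
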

\begin{proof}
    Firstly, we split the integral into the sum of two terms depending on whether ${u} \geq k$ or ${u} \leq -k$ and, then, use the properties of the integral and the fact that $A(k) = \{x \in \Omega | u(x) \geq k\} \cup \{x \in \Omega | u(x) \leq -k\}$:
    \begin{equation*}
        \begin{aligned}
        \int_{A(k)}  {u}\zeta_M &= M \biggr[ \int_{ {u}\geq k}  {u}( {u}-k) + \int_{ {u}\leq -k}  {u}( {u}+k) \biggr]\\
        &= M \biggr[ \int_{ {u}\geq k}  {u}{u} -  \int_{ {u}\geq k}  {u}k + \int_{ {u}\leq -k}  {u}{u} + \int_{ {u}\leq -k}  {u}k\biggr]\\
        &= M \biggr[\int_{A(k)}  {u} {u} -\int_{ {u}\geq k>0} \underbrace{ {u}k}_{>0} + \int_{ {u}\leq -k<0} \underbrace{ {u}k}_{<0} \biggr] \leq M \int_{A(k)}  {u} {u}.
    \end{aligned}
    \end{equation*}
\end{proof}

\end{document}